\documentclass[reqno]{amsart}
\usepackage{hyperref} 
\usepackage{graphicx}
\usepackage{tikz}
\usepackage{amsthm,amsmath,verbatim,amssymb}
\usepackage{color}
\usepackage{arydshln}

\theoremstyle{plain}
\newtheorem{lem}{Lemma}

\newtheorem{defn}{Definition}
\newtheorem{prop}{Proposition}
\newtheorem{thm}{Theorem}
\newtheorem{cor}{Corollary}

\newcommand{\N}{\mathbb{N}}
\newcommand{\abs}[1]{\left\vert#1\right\vert}
\newcommand{\norm}[1]{\left\Vert#1\right\Vert}
\newcommand{\1}{\mathbf{1}}
\newcommand{\ep}{\epsilon}
\renewcommand{\i}{\mathrm{i}}

\newcommand{\T}{{\bf{T}}}
\renewcommand{\hat}{\widehat}
\newcommand{\x}{{\bf{x}}}
\newcommand{\y}{{\bf{y}}}

\newcommand{\E}{\mathbb{E}}
 
\renewcommand{\d}{\textrm{d}}

\newcommand{\beq}{\begin{equation}}
	\newcommand{\eeq}{\end{equation}}
\newcommand{\beqq}{\begin{eqnarray*}}
	\newcommand{\eeqq}{\end{eqnarray*}}
\newcommand{\bc}{\begin{center}}
	\newcommand{\ec}{\end{center}}

\newcommand{\de}{determinantal point }  
\begin{document}
	
	\title[Multivariate linear statistics]{Determinantal point processes on  spheres: multivariate linear statistics}
	\author[Feng]{Renjie Feng}
	\author[G\"otze]{Friedrich G\"{o}tze}
	\author[Yao]{Dong Yao}
	
	\address{Faculty of Mathematics, Bielefeld University, Germany.}
	\email{rfeng@math.uni-bielefeld.de}
	\address{Faculty of Mathematics, Bielefeld University, Germany.}
	
	\email{goetze@math.uni-bielefeld.de}
	\address{Research Institute of Mathematics, Jiangsu Normal University, China. }
	\email{dongyao@jsnu.edu.cn}

	\date{\today}
	\maketitle
	\centerline{\it In memory of Steve Zelditch (1953-2022)}
	\begin{abstract}
		In this paper, we will derive the first and 2nd order Wiener chaos decomposition for the multivariate  linear statistics of   the \de processes associated with the spectral projection kernels on the unit spheres $S^d$.  We will first get a graphical representation for the cumulants of multivariate linear statistics for any \de process.  The main results then follow from the very precise estimates and identities regarding the spectral projection kernels and the symmetry of the spheres. 
	\end{abstract}

	\section{Introduction}
	
	
	The determinantal point process is an important class of point processes with applications in  random matrix theory, statistical mechanics, quantum mechanics, etc.  It's also called the Slater determinant in quantum mechanics that is to describe the wave function of a multi-fermionic system.  
	In this paper,  we will consider determinantal point processes on the unit spheres associated with  the spectral projection kernels of the Laplace operator with respect to the standard round metric. Such spectral projection kernels can be represented in terms of the spherical harmonics, which are one of the most fundamental wave functions in quantum mechanics to describe particles confined to the spheres.

	Let  $\Phi$  be a point process sampled on the space $\mathcal X$.  The $k$-th joint intensity function 	$\rho_k$ of the point process $\Phi$ 
	is  defined by
	\begin{equation}\label{defrho1}
		\E\Big[\sum_{(x_1,\ldots, x_k)\in \Phi_*^k }f(x_1,\ldots, x_k)\Big]=\int_{\mathcal X^k}
		f(x_1,\ldots, x_k)\rho_k(x_1,\ldots, x_k)dx_1\ldots dx_k,
	\end{equation}
	where $f$ is any bounded measurable function and   the set 
	\begin{equation}\label{def of *}
		\Phi^k_*:=\{(x_1,\ldots, x_k): x_i\in\Phi, \, \forall 1\leq i\neq j\leq k,
		x_i\neq x_j\}.
	\end{equation}
	If $\Phi$  is a determinantal point process associated with some kernel function $K$, then its $k$-th joint intensity function reads 
	\begin{equation}\label{rhodet}
		\rho_k(x_1, \ldots, x_k)=\det\Big(K(x_i,x_j)_{1\leq i\leq j\leq k}\Big),
	\end{equation}
	where $K(x_i,x_j)_{1\leq i,j\leq k}$ is  a $k\times k$ matrix whose $(i,j)$ entry is $K(x_i,x_j)$.  
	
	In this paper we will focus on the case when $K$ is the spectral projection kernel on the unit sphere $S^d$ with $d\geq 2$, defined as follows. 
	The Laplace operator  on  $S^d$ with respect to the standard round metric   has discrete spectrum $\Big\{\lambda_n=-n(n+d-1), n=0, 1, 2, ....\Big\}$. Here,  the round metric is the pullback of the Euclidean metric under the inclusion map $i: S^d\to\mathbb R^{d+1}$. For a given eigenvalue $\lambda_n$,   the corresponding eigenfunctions 
	are called the spherical harmonics of level $n$. Let $\mathcal H_n(S^d)$ be the space of the spherical harmonics of level $n$. Then one has  \cite{AH}
	\beq\label{knquan}k_n:=\text{dim} \mathcal H_n(S^d)=\frac{2n+d-1}{n+d-1} 
	{\begin{pmatrix}
			n+d-1\\
			d-1\\
	\end{pmatrix}},\eeq 
	which admits the asymptotic estimate (by $d\geq 2$)
	\beq\label{asykn}k_n \sim 2n^{d-1}/\Gamma(d).\eeq 	
	Let  $K_n$ be the spectral projection\beq \label{pmap}K_n: L^2(S^d) \to \mathcal H_n(S^d),\end{equation} and we denote by $K_n(x,y)$ its kernel. 

Now we define a determinantal point process $\Phi_n$ on $S^d$ associated with the  kernel $K_n(x,y)$.  Here the total number of points in $\Phi_n$ is almost surely $k_{n}$. 
Note that 
$\Phi_n$ can be alternatively defined by sampling $k_{n}$ points on $S^d$ according to the probability density
\beq
\frac{1}{k_{n}!} \det\Big(K_{n}(x_i,x_j)_{1\leq i,j\leq k_n}\Big).
\eeq
Given a function $f(x_1,..,x_k)$ of $k\geq 1$ variables, we define the multivariate linear statistics  	\beq\label{deflnf0} L_nf: =\sum_{(x_1,\ldots, x_k)\in \Phi_{n,*}^k }f(x_1,\ldots, x_k),\eeq
where 
\beq
\Phi^k_{n,*}:=\{(x_1,\ldots, x_k): x_i \in \Phi_n^{}, \,x_i\neq x_j,\, \forall 1\leq i\neq j\leq k \}.
\eeq
Multivariate linear statistics of this form are usually called U-statistics. 

For $1\leq i\leq k$, we define the \emph{$i$-margin} function $f_i$ by integrating $f$ with respect to all variables over $S^d$ except $x_i$, i.e., 
\begin{equation}\label{def_fi}
	f_i(x)=\int_{(S^d)^{ k-1}} f(x_1,\ldots, x_{i-1},x,x_{i+1},\ldots, x_k)dx_1\cdots dx_{i-1}dx_{i+1}\cdots dx_k. 
\end{equation} Here,  we denote by $dx$ the volume element with respect to the standard round metric on $S^d$. If $k=1$, the 1-margin function is defined to be $f$ itself.  

For $1\leq i<j\leq  k$,  we define the \emph{$(i,j)$-margin} function $f_{i,j}$ to be
\begin{equation}\label{ijmargin}
	f_{i,j}=\int_{(S^d)^{ k-2}} f(x_1,\ldots, x_k)dx_1\cdots dx_{i-1}dx_{i+1}\cdots dx_{j-1}dx_{j+1}\cdots dx_k. 
\end{equation}
In this article, we will study the limiting distribution of the multivariate linear statistics $L_nf$.  We first give an asymptotic expansion for the expectation of $L_nf$. 	
\begin{thm}\label{mean}Let $f(x_1,.., x_k)$ be a bounded function of $k$ variables. 
	We have  		\begin{equation}\label{eq:mean}
		\begin{split}
			\E(L_nf)=&\left(\frac{k_n}{s_d}\right)^k \int_{(S^d)^{ k}} f(x_1,\ldots,x_k)dx_1\cdots dx_k\\
			&- \frac{k_n^{k-1}}{s_d^k}\sum_{1\leq i<j\leq k}
			\frac{2^{d-1}}{\Gamma(d)\pi} \Gamma\Big(\frac{d}{2}\Big)^2 \int_{S^d} \int_{S^d}  \frac{f_{i,j}(x,y)}{\sin^{d-1} (\arccos (x\cdot y)) } dxdy\\&+o(n^{(d-1)(k-1)}),
		\end{split}
	\end{equation}
	where $s_d=2\pi^{\frac{d+1}2}/\Gamma(\frac{d+1}2)$ is the surface area of $S^d$. 
\end{thm}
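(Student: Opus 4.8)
The plan is to convert the expectation into an integral of the $k$-th joint intensity and to expand the resulting determinant. By \eqref{defrho1} and \eqref{rhodet},
\begin{equation*}
\E(L_nf)=\int_{(S^d)^k} f(x_1,\dots,x_k)\,\det\big(K_n(x_i,x_j)\big)_{1\le i,j\le k}\,dx_1\cdots dx_k,
\end{equation*}
and I would expand the determinant by the Leibniz formula, grouping the permutations by cycle type. The identity permutation contributes $\prod_{i}K_n(x_i,x_i)=(k_n/s_d)^k$, since rotational invariance of the projection $K_n$ forces the diagonal $K_n(x,x)\equiv k_n/s_d$; integrating $f$ against this constant yields the first term of \eqref{eq:mean}. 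Each transposition $(i,j)$ contributes, with sign $-1$, the factor $K_n(x_i,x_j)^2(k_n/s_d)^{k-2}$, and integrating out the remaining $k-2$ variables through the definition \eqref{ijmargin} of $f_{i,j}$ reduces its contribution to
\begin{equation*}
-\Big(\tfrac{k_n}{s_d}\Big)^{k-2}\int_{S^d}\!\int_{S^d} f_{i,j}(x,y)\,K_n(x,y)^2\,dx\,dy.
\end{equation*}

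The analytic core, and what I expect to be the main obstacle, is the precise asymptotics of this double integral. I would use the addition theorem to write $K_n(x,y)=\frac{k_n}{s_d}\,C_n^{\lambda}(\cos\theta)/C_n^{\lambda}(1)$ with $\lambda=(d-1)/2$ and $\theta=\arccos(x\cdot y)$, then pass to geodesic polar coordinates about $x$, reducing the inner integral to $\frac{k_n^2}{s_d^2}\int_0^\pi \big(C_n^\lambda(\cos\theta)/C_n^\lambda(1)\big)^2 F_x(\theta)\,\sin^{d-1}\!\theta\,d\theta$, where $F_x(\theta)$ is the average of $f_{i,j}(x,\cdot)$ over the geodesic sphere of radius $\theta$. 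Inserting the classical Darboux/Szeg\H{o} asymptotics $C_n^\lambda(\cos\theta)/C_n^\lambda(1)\sim c_{\lambda}\,n^{-\lambda}(\sin\theta)^{-\lambda}\cos((n+\lambda)\theta-\lambda\pi/2)$, valid away from the endpoints, and squaring, the factor $(\sin\theta)^{-2\lambda}=(\sin\theta)^{-(d-1)}$ cancels the volume element $\sin^{d-1}\!\theta$; the $\cos^2$ reduces to its mean $1/2$ after integration, the oscillatory remainder vanishing by Riemann--Lebesgue because $F_x$ is merely bounded. Re-expressing the surviving $\theta$-integral $\int_0^\pi F_x(\theta)\,d\theta$ as an integral over $S^d$ reinstates the weight $1/\sin^{d-1}(\arccos(x\cdot y))$, and collecting the powers of $n$ via \eqref{asykn} produces the second term of \eqref{eq:mean}. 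The two delicate points are the endpoint regions $\theta\to0,\pi$, where the Szeg\H{o} asymptotics degenerate, and the determination of the exact constant $2^{d-1}\Gamma(d/2)^2/(\Gamma(d)\pi)$, which I would pin down using the Legendre duplication formula applied to $\Gamma(d-1)=\Gamma\big(2\cdot\tfrac{d-1}{2}\big)$.

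For the endpoint issue I would observe that, by the reproducing property, $\int_{S^d}K_n(x,y)^2\,dy=K_n(x,x)=k_n/s_d$, and that the $1/\sin^{d-1}\theta$ profile found above already integrates to exactly this same mass (the consistency of these two computations is itself a check on the constant, again via the duplication formula). Hence there is no additional concentration on the diagonal, and a small geodesic cap $\{\theta<\delta\}$ carries only $O(\delta\,k_n)$ of the mass uniformly in $n$, so the endpoint contribution may be discarded by letting $n\to\infty$ and then $\delta\to0$.

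Finally, I would show that every permutation other than the identity and the $\binom{k}{2}$ transpositions is of lower order. Writing such a $\sigma$ as a product of cycles on its non-fixed indices and bounding each cyclic kernel product by Cauchy--Schwarz together with $\int_{S^d}K_n(x,y)^2\,dy=k_n/s_d$ (so that $\int_{S^d}|K_n(x,y)|\,dy=O(n^{(d-1)/2})$ and the associated positive integral operator has operator norm $O(n^{(d-1)/2})$), one finds that a permutation with $p$ fixed points contributes $O\big(n^{(d-1)(k+p)/2}\big)$. Since $p=k$ only for the identity and $p=k-2$ only for a single transposition, all remaining permutations have $p\le k-3$ and contribute $O\big(n^{(d-1)(k-3/2)}\big)=o\big(n^{(d-1)(k-1)}\big)$, which is absorbed into the error term of \eqref{eq:mean}.
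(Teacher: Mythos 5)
Your proposal is correct, and its skeleton coincides with the paper's: expand the determinant in \eqref{elnf} into the identity permutation, the $\binom{k}{2}$ transpositions, and a remainder; reduce each transposition via \eqref{ijmargin} to $-\left(\frac{k_n}{s_d}\right)^{k-2}\int f_{i,j}(x,y)K_n^2(x,y)\,dxdy$; and extract the asymptotics of this double integral from Szeg\H{o}-type kernel asymptotics in polar coordinates, with the $\cos^2$ factor averaging to $\tfrac12$ by Riemann--Lebesgue. That last step is exactly the content of the paper's Lemma \ref{sc}, proved there from Hilb's formula \eqref{hilbs}; your Gegenbauer normalization and duplication-formula bookkeeping for the constant are equivalent. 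Where you genuinely diverge is in the two technical supports. (i) For the remainder permutations, you use a soft operator-theoretic bound: Cauchy--Schwarz and \eqref{sdsdsdsd} give $\sup_x\int_{S^d}|K_n(x,y)|dy=O(n^{(d-1)/2})$, hence by the Schur test the operator $T$ with kernel $|K_n|$ has norm $O(n^{(d-1)/2})$, and $\operatorname{Tr}(T^r)\le\|T\|^{r-2}\operatorname{Tr}(T^2)=O(n^{(d-1)r/2})$ bounds each $r$-cycle, so a permutation with $p$ fixed points contributes $O(n^{(d-1)(k+p)/2})$ and all non-identity, non-transposition terms are $O(n^{(d-1)(k-3/2)})=o(n^{(d-1)(k-1)})$. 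The paper instead proves the same cycle bound as Lemma \ref{productcontrol}, from the pointwise off-diagonal decay \eqref{control pn} and a geodesic triangle inequality, and handles double transpositions via \eqref{reproduce4}. Your argument is shorter and applies verbatim to any projection kernel; the paper's pointwise machinery is heavier but is reused later (Lemma \ref{keylemma}, and the cumulant estimates behind Theorems \ref{clt} and \ref{clt2}), so for the paper the investment pays off beyond Theorem \ref{mean}. (ii) At the endpoints $\theta\to 0,\pi$, where the asymptotics degenerate, you argue by mass conservation: \eqref{sdsdsdsd} fixes the total mass of $K_n^2$ at $k_n/s_d$, the bulk profile on $[\delta,\pi-\delta]$ already accounts for $(1-O(\delta))$ of it (your consistency check), so the caps carry $O(\delta k_n)+o(k_n)$ and, since $f_{i,j}$ is bounded, are negligible after $n\to\infty$ then $\delta\to 0$. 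The paper instead bounds the near-diagonal region directly (the estimate $|I_4|\le CM^2/n^2$ in the proof of Lemma \ref{sc}) together with explicit control of the Hilb remainder. Both treatments are rigorous; yours is softer, the paper's yields uniform quantitative error terms.
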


By estimating the growth order of the cumulants of $L_nf$, we can prove the following central limit theorem for  $L_n f$.  
\begin{thm}\label{clt}
	Let $f$ be a bounded function of $k$ variables on $S^d$. 
	Assume that
	\begin{equation}\label{def_F}
		F(x):=\sum_{i=1}^k f_i(x)
	\end{equation}
	is not constant almost everywhere in $x\in S^d$, 
	then it holds that 
	\begin{equation}\label{q2case1}
		\lim_{n\to\infty}\frac{1}{k_n^{2k-1}} \mathrm{Var}(L_nf)\\
		=\frac{2^{d-2}}{s_d^{2k}\Gamma(d)\pi} \Gamma\Big(\frac{d}{2}\Big)^2 \int_{S^d} \int_{S^d}  \frac{(F(x)-F(y))^2}{\sin^{d-1} (\arccos (x\cdot y)) }dxdy>0.
	\end{equation}
	In addition,  $L_nf$ is asymptotically normal, i.e.,
	\beq \label{connormal}
	\frac{L_nf-\E( L_nf)}{(\mathrm{\mathrm{Var}}(L_nf))^{\frac{1}{2}}} 	\xrightarrow{\textrm{d}}N(0,1),
	\eeq
	where $N(0,1)$ is the standard Gaussian distribution and the notation $	\xrightarrow{\textrm{d}}$ means the convergence in distribution. 
\end{thm}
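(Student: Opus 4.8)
The plan is to prove both assertions by the method of cumulants, reducing the central limit theorem to a precise asymptotic for the second cumulant and negligibility of all higher ones. I would start from the graphical representation of the cumulants of $L_nf$ for a general determinantal point process (the representation the paper develops first): the $m$-th cumulant $\kappa_m(L_nf)$ is a finite signed sum over connected diagrams whose vertices carry copies of $f$ with its $k$ arguments and whose edges carry factors of the projection kernel $K_n$, the remaining arguments being integrated over $S^d$. The whole analysis then rests on two inputs, one combinatorial and one analytic: identifying which diagrams dominate at each order, and evaluating the resulting kernel integrals using sharp asymptotics for $K_n$.

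For the variance I would first recall that, for a projection kernel, the variance of an ordinary linear statistic has the clean symmetric form
\begin{equation*}
\mathrm{Var}\Big(\sum_{x\in\Phi_n} g(x)\Big)=\frac12\int_{S^d}\int_{S^d}(g(x)-g(y))^2 K_n(x,y)^2\,dx\,dy,
\end{equation*}
since $\int_{S^d}K_n(x,y)^2\,dy=K_n(x,x)=k_n/s_d$ by $K_n^2=K_n$ and rotational symmetry. The role of the first-order (H\'ajek--Hoeffding) projection is played here by $F=\sum_i f_i$: among the diagrams contributing to $\kappa_2(L_nf)$, the leading ones fill $k-1$ of each vertex's slots with \emph{typical} points, each carrying a density factor $k_n/s_d$, so that the leading part of the variance is $\tfrac12(k_n/s_d)^{2(k-1)}\int\!\int (F(x)-F(y))^2K_n(x,y)^2\,dx\,dy$ up to lower order. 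The decisive analytic step is the near-diagonal estimate
\begin{equation*}
\int_{S^d}\int_{S^d}(g(x)-g(y))^2K_n(x,y)^2\,dx\,dy\;\sim\;c_d\,k_n\int_{S^d}\int_{S^d}\frac{(g(x)-g(y))^2}{\sin^{d-1}(\arccos(x\cdot y))}\,dx\,dy,
\end{equation*}
with explicit constant $c_d=2^{d-1}\Gamma(d/2)^2/(s_d^2\Gamma(d)\pi)$; substituting $g=F$ and collecting the powers of $k_n$ gives exactly the stated limit, and strict positivity follows because the right-hand integral is positive whenever $F$ is non-constant. This is where rotational symmetry and the representation of $K_n(x,y)$ through Gegenbauer/Jacobi polynomials (Hilb and Mehler--Heine asymptotics) enter.

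For asymptotic normality I would show that every cumulant of order $m\ge3$ is negligible after normalization, i.e. $\kappa_m(L_nf)=o\big(k_n^{(2k-1)m/2}\big)$. In the graphical expansion a cumulant of order $m$ corresponds to a connected \emph{necklace} of $m$ vertices joined by $K_n$-edges; connectedness forces at least one additional off-diagonal integration per order beyond the first, and combining the diagonal identity $\int K_n(x,y)^2\,dy=k_n/s_d$ with uniform off-diagonal decay of $K_n$ shows that each extra vertex contributes a factor of strictly smaller order than $k_n^{(2k-1)/2}$. Hence $\kappa_m(L_nf)/\mathrm{Var}(L_nf)^{m/2}\to0$ for all $m\ge3$, while the first two cumulants match those of a Gaussian; convergence in distribution to $N(0,1)$ then follows from the standard cumulant criterion.

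The main obstacle, in my view, is the analytic input rather than the combinatorics. Producing the sharp asymptotic with the \emph{correct} constant $c_d$ — and, simultaneously, the uniform off-diagonal bounds needed to control the diagram sums at all orders — requires precise, uniform asymptotics for $K_n(x,y)$ across the whole range of $\arccos(x\cdot y)$, not merely a leading-order local expansion. The secondary difficulty is the bookkeeping in the U-statistic cumulant expansion: one must verify that the higher margins $f_{i,j}$ and the various pairings of the $k$ slots at each vertex do not contaminate the leading order, so that the limit is genuinely governed by the single function $F$.
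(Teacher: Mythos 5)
Your overall route is the same as the paper's: the graphical cumulant representation, the identification of the leading second--order diagrams (which produce exactly $\tfrac12(k_n/s_d)^{2k-2}\int\!\!\int (F(x)-F(y))^2K_n^2(x,y)\,dx\,dy$, as in Lemma \ref{Varctl}), and the kernel integral asymptotic with the constant $c_d$ (Lemma \ref{sc}) are all as in the paper, and your variance computation, including the constant in \eqref{q2case1} and the positivity claim, is correct. (The paper happens to treat $k=1$ separately by invoking Soshnikov's CLT criterion, but a uniform cumulant treatment works there too.)

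However, your argument for the higher cumulants has a genuine gap. For $m\ge 3$, the dangerous diagrams are those with $\abs{\T}=km$ and $\sigma$ a single $m$-cycle: after integrating out the $km-m$ diagonal variables, such a diagram contributes
\begin{equation*}
\left(\frac{k_n}{s_d}\right)^{km}\int_{(S^d)^m} h(x_1,\ldots,x_m)\prod_{i=1}^m P_n(x_i,x_{i+1})\,dx_1\cdots dx_m
\end{equation*}
with $h$ bounded (a product of margins of $f$). The uniform off-diagonal decay $\abs{P_n(\cos\theta)}\le C\big(\min\{n\theta,n(\pi-\theta)\}\big)^{-(d-1)/2}$ gives, via Lemma \ref{productcontrol}, only $O(n^{-(d-1)m/2})$ for the integral, hence a total of $O(n^{(d-1)(km-m/2)})$ --- which is \emph{exactly} the order of $\mathrm{Var}(L_nf)^{m/2}=\Theta(n^{(d-1)(2k-1)m/2})$, not strictly smaller. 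Moreover this bound is sharp when absolute values are taken, so no decay estimate alone can close the argument; indeed, if pure decay yielded a strict improvement, the same reasoning applied at $m=2$ would force the variance limit to vanish, contradicting your own (correct) variance asymptotic. The missing ingredient is an oscillation argument: the paper's Lemma \ref{keylemma} replaces $P_n$ by its Hilb asymptotics $p_n$ (an explicit cosine times $n^{-(d-1)/2}$ plus a controlled error of order $(n\beta)^{-d/2}$) and applies the Riemann--Lebesgue lemma to the product of cosines to show that the integral above is $o(n^{-(d-1)m/2})$ for cycles of length $m\ge 3$. This cancellation is precisely what fails at $m=2$, where the product of the two cosines contains the non-oscillating term $\cos^2$, which is why the variance survives at this order while all cumulants of order $m\ge3$ are negligible. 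Without this ingredient (or a substitute for it), the claim $Q_m(L_nf)=o(\mathrm{Var}(L_nf)^{m/2})$, and hence the asymptotic normality \eqref{connormal}, does not follow.
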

Combining Theorem \ref{mean} and Theorem \ref{clt}, we have the following corollary.
\begin{cor}\label{cor:1}
	Under the assumption of Theorem \ref{clt}, 
	$$\left(L_n f -   \left(\frac{k_n}{s_d}\right)^k \int_{(S^d)^{k}} f(x_1,\ldots,x_k)dx_1\cdots dx_k \right)\operatorname{Var}(L_nf)^{-1/2}   
	\xrightarrow{\textrm{d}}N(0,1).
	$$
\end{cor}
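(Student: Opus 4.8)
The plan is to deduce the corollary from the two preceding theorems via Slutsky's theorem, the only real content being a comparison of orders of magnitude. Write $\mu_n := \E(L_nf)$ for the true mean and $M_n := (k_n/s_d)^k \int_{(S^d)^k} f\,dx_1\cdots dx_k$ for the leading term appearing in the corollary. Then I would decompose the centered-and-scaled statistic as
\begin{equation*}
\frac{L_nf - M_n}{\operatorname{Var}(L_nf)^{1/2}} = \frac{L_nf - \mu_n}{\operatorname{Var}(L_nf)^{1/2}} + \frac{\mu_n - M_n}{\operatorname{Var}(L_nf)^{1/2}}.
\end{equation*}
By Theorem \ref{clt}, the first summand converges in distribution to $N(0,1)$. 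Hence, by Slutsky's theorem, it suffices to show that the deterministic second summand tends to $0$.

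For the second summand I would read off the orders of numerator and denominator from the two theorems. Theorem \ref{mean} gives $\mu_n - M_n = O(k_n^{k-1})$: the first correction term is a fixed constant (depending on $f$ only through its $(i,j)$-margins) times $k_n^{k-1}$, while the remainder is $o(n^{(d-1)(k-1)}) = o(k_n^{k-1})$ by the asymptotic $k_n \sim 2n^{d-1}/\Gamma(d)$ from \eqref{asykn}. Theorem \ref{clt}, together with the hypothesis that $F$ is non-constant, guarantees that $\operatorname{Var}(L_nf) = (c + o(1))\,k_n^{2k-1}$ for a strictly positive constant $c$, so that $\operatorname{Var}(L_nf)^{1/2} \sim c^{1/2} k_n^{k - 1/2}$.

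Combining these, the second summand is of order $k_n^{(k-1) - (k - 1/2)} = k_n^{-1/2} \to 0$ as $n \to \infty$, and the conclusion follows from Slutsky's theorem. This is where the non-constancy of $F$ is essential: it forces the limiting variance to be strictly positive, so that the standard deviation genuinely grows like $k_n^{k-1/2}$ and dominates the $O(k_n^{k-1})$ gap between the true mean and its leading term. I expect no serious obstacle here; the argument is pure order-of-magnitude bookkeeping once Theorems \ref{mean} and \ref{clt} are in hand, the single point requiring care being to confirm that the mean correction ($k_n^{k-1}$) is of strictly smaller order than the standard deviation ($k_n^{k-1/2}$), i.e.\ that replacing $\mu_n$ by $M_n$ perturbs the statistic by a deterministic amount that vanishes on the scale of the fluctuations.
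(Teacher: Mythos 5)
Your proof is correct and is precisely the argument the paper intends: the paper states the corollary without a written proof, as an immediate consequence of ``combining Theorem \ref{mean} and Theorem \ref{clt}.'' Your decomposition via Slutsky's theorem, with the order comparison $\mu_n - M_n = O(k_n^{k-1})$ against $\operatorname{Var}(L_nf)^{1/2} \sim c^{1/2} k_n^{k-1/2}$ (where $c>0$ requires the non-constancy of $F$), is exactly the bookkeeping that justifies that one-line deduction.
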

When the assumption of Theorem \ref{clt}  fails, i.e., $F(x)$ is constant almost everywhere,  
the right hand side of \eqref{q2case1} will be degenerate, i.e., 
$\mathrm{\mathrm{Var}}(L_nf)$ will have strictly smaller growth order than $k_n^{2k-1}=\Theta(n^{(d-1)(2k-1)})$. 
For such degenerate case, our next theorem shows that for a class of test functions, the limiting distribution is given by a mixture of centered chi-square distributions, i.e., the 2nd order Wiener Chaos. 

We now consider the following two invariance conditions on the bounded test function $f(x_1,\ldots, x_k)$, $k\geq 2$. 
\begin{itemize}
	\item $f$ is invariant under permutations, i.e.,
	\begin{equation}\label{sym}
		f(x_1,\ldots, x_k)=f(x_{\sigma(1)},\ldots, x_{\sigma(k)}),  \forall  \, \sigma \in \text{Sym}(k).
	\end{equation}
	\item 
	We assume that  the $(1,2)$-margin function   $f_{1,2}(x_1,x_2)$  only depends on their spherical distance dist($x_1,x_2$) (abbreviated as  $\d(x_1,x_2)$), i.e., 
	\begin{equation}\label{dis}
		f_{1,2}(x_1,x_2)=f_{1,2}(x_1',x_2'),\,\,\, \forall \, \d(x_1,x_2)=\d(x_1',x_2').
	\end{equation}
	
\end{itemize}
We will show that if the test function $f$ satisfies these two assumptions, then $F(x)$ must be   constant on the sphere, and thus the variance will be degenerate. 

As a remark, the condition \eqref{sym} is not an essential one. We can always symmetrize a function $f$  by considering the average $$ 
\bar{f}(x_1, \ldots, x_k)= \frac{1}{k!}\sum_{\sigma\in \textrm{Sym}(k)}f(x_{\sigma(1)},\ldots, x_{\sigma(k)}),
$$
and this will yield $L_nf=L_n \bar{f}$ by \eqref{deflnf0}. 

There is an important class of  test functions that satisfy these two assumptions.  For example, given $\delta>0$,  if we choose   \beq f(x_1, x_2)= \1[\d(x_1, x_2)<\delta],\eeq 
where  
the indicator function  is equal to 1 if the distance $\d(x_1, x_2)<\delta$ and 0 otherwise, then the random variable $L_nf$ will be the number of pairs of random points whose distances are less than $\delta$. Similarly, if we take \beq f(x_1, x_2, x_3)=\1[\d(x_1, x_2)<\delta, \,\d(x_1, x_3)<\delta, \,\d(x_2, x_3)<\delta],\eeq then $L_nf$ will count the number of triangles where the three vertices of the triangle are within distance $\delta$. These types of counting statistics are useful tools to study the topology of random complexes built over random point processes, due to its connections with Betti numbers,  e.g., \cite{BO, KM, YA}. 
Our main result Theorem \ref{clt2} below implies that such types of
counting statistics of the \de process on $S^d$ converge to the 2nd order Wiener chaos.

Under conditions \eqref{sym} and \eqref{dis}  we can determine the growth order of $\mathrm{\mathrm{Var}}(L_nf)$ and find the limiting distribution of $L_nf$.	 We define the function
\begin{equation}\label{hath}
	\hat{h}(x,y):=\int_{S^d} (f_{1,2}(x,y)-f_{1,2}(x,z))\sin^{-(d-1)}(\arccos(z \cdot y) )dz.
\end{equation}
We will see that $\hat{h}$ is a bounded symmetric function, and thus we can consider it as a Hilbert-Schmidt integral operator acting on $L^2(S^d)$. Then this operator is compact and self-adjoint. Therefore we have the spectral decomposition 
\begin{equation}\label{specdecomp}
	\hat{h}(x,y)=\sum_{j=1}^{\infty} z_j w_j(x)w_j(y), 
\end{equation}
where $\{z_j,j\geq 1\}$ are eigenvalues of the operator, and $\{w_j,j\geq 1\}$ are the corresponding eigenfunctions which form an orthonormal basis of $L^2(S^d)$. 

The following theorem  states that the multivariate linear statistics will tend to a mixture of centered chi-squared distributions in the degenerate case. 
\begin{thm}\label{clt2}
	For any bounded function $f(x_1,\ldots,x_k)$ with  $k\geq 2$ satisfying conditions \eqref{sym} and \eqref{dis}, we have
	\begin{equation}\label{q2case2}
		\lim_{n\to\infty} \frac{\mathrm{Var}(L_nf)}{k_n^{2k-2}}=	\frac{2C_{d}^2 k^2(k-1)^2}{\Gamma (d)^2s_d^{2k}} \int_{(S^d)^2} \hat{h}(x,y)^2dxdy, 
	\end{equation}
	where the constant $	C_{d}:=\frac{2^{d-2} \Gamma(d/2)^2}{\pi}.$ 
	Furthermore,  we have 
	\begin{equation}\label{limcase2}
		\Big(L_nf-\E(L_nf)\Big) \left( \frac{k_n}{s_d}\right)^{-k}  \left(\frac{C_{d} k(k-1)}{n^{d-1}}\right)^{-1} 
		\xrightarrow{\textrm{d}}
		\sum_{i=1}^{\infty} z_i (\chi_i-1)/2,  
	\end{equation}
	where $\chi_i,i\geq 1$ are independent chi-squared random variables with one degree of freedom and  $	\sum_{i=1}^{\infty} z_i (\chi_i-1)/2$ is understood as the $L^2$-limit of  $	\sum_{i=1}^N z_i (\chi_i-1)/2$ as $N\to\infty$.
\end{thm}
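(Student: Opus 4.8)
The plan is to prove both \eqref{q2case2} and \eqref{limcase2} by the method of cumulants, combining the graphical representation of the cumulants of a multivariate linear statistic of a determinantal point process with the sharp asymptotics of the spectral projection kernel $K_n$. By the symmetrization remark preceding the theorem we may assume $f$ is permutation invariant, so that all the margins $f_i$ coincide; then rotational invariance of the sphere together with \eqref{dis} forces $f_1(x)=\int_{S^d}f_{1,2}(x,y)\,dy$ to be constant, hence $F=kf_1$ is constant. This is the source of the degeneracy: the first-order (linear) Wiener chaos that produces the non-degenerate central limit theorem of Theorem \ref{clt} now vanishes, the leading fluctuation of $L_nf$ drops by one factor of $n^{d-1}$, and what survives is the second-order chaos. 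Writing $b_n:=(k_n/s_d)^{k}\,C_d k(k-1)/n^{d-1}$, so that $b_n^2\asymp k_n^{2k-2}$, and $Y_n:=(L_nf-\E L_nf)/b_n$, the target is to show $\kappa_m(Y_n)\to \tfrac{(m-1)!}{2}\sum_{j}z_j^m$ for every $m\ge 2$. These are exactly the cumulants of $\sum_j z_j(\chi_j-1)/2$, since a $\chi^2_1$ variable has $m$-th cumulant $2^{m-1}(m-1)!$, with $\sum_j z_j^m$ being the trace of the $m$-th power of the operator with kernel $\hat h$ in \eqref{specdecomp}.

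I would first carry out the variance ($m=2$) computation, which already isolates $\hat h$. Expanding $\mathrm{Var}(L_nf)=\kappa_2(L_nf)$ through \eqref{rhodet} and the cluster expansion, the $2k$ integration variables split into two blocks of $k$, and the connected contributions are those in which the two blocks are linked by kernel factors. Using the projection identities $K_n(x,x)=k_n/s_d$, $\int_{S^d}K_n(x,y)^2\,dy=k_n/s_d$ and $\int_{S^d}K_n(x,y)K_n(y,z)\,dy=K_n(x,z)$, the free coordinates contribute powers of $k_n/s_d$, while each inter-block link is a $K_n(x,y)^2$ factor whose asymptotics, governed by the off-diagonal kernel estimates, collapse the pair to a single effective integration at leading order and produce at sub-leading order the singular weight $\sin^{-(d-1)}(\arccos(x\cdot y))$ with constant $C_d$. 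The constant part of this link is precisely the piece removed by the vanishing of $F$, so the centered combination in \eqref{hath} emerges and the surviving integral is $\int_{(S^d)^2}\hat h(x,y)^2\,dx\,dy$; the prefactor $k^2(k-1)^2$ records the choice of the two connecting coordinates in each block. This gives \eqref{q2case2}, and dividing by $b_n^2$ yields $\mathrm{Var}(Y_n)\to\tfrac12\sum_j z_j^2$.

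Next I would treat the cumulants of order $m\ge 3$ by the same graphical expansion: $\kappa_m(L_nf)$ is a sum over connected configurations on the $m$ blocks, the edges being kernel chains. The kernel estimates assign to each configuration a power of $n^{d-1}$, and a power-counting argument shows that the maximal growth $b_n^m$ is attained exactly by the single-cycle configurations, in which the $m$ blocks are joined in a cyclic order with one $\hat h$-type link between consecutive blocks; every other connected configuration loses at least one factor of $n^{d-1}$. Along a cycle the consecutive links compose as the integral kernel $\hat h$, so closing the cycle produces the trace $\sum_j z_j^m$; the number of distinct cyclic orderings of the $m$ blocks, counted up to rotation and reflection, contributes $(m-1)!/2$, and the per-block choice of connecting coordinates together with the per-link constant reproduces the factors $k(k-1)$ and $C_d$ absorbed into $b_n$. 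Hence $\kappa_m(Y_n)\to\tfrac{(m-1)!}{2}\sum_j z_j^m$ for all $m\ge2$, while $\kappa_1(Y_n)=0$ by centering.

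Finally, since $\hat h$ is bounded and symmetric with $\sum_j z_j^2=\int_{(S^d)^2}\hat h^2<\infty$, the limit $\sum_j z_j(\chi_j-1)/2$ is a second-order Wiener chaos with exponentially decaying tails and is therefore determined by its moments; the method of cumulants then upgrades the convergence of every $\kappa_m(Y_n)$ to convergence in distribution, establishing \eqref{limcase2} (alternatively one truncates the spectral sum at level $N$, proves a joint central limit theorem for the eigen-coordinates and passes through the quadratic map, controlling the remainder by the variance bound). I expect the main obstacle to be the clean separation of the leading cyclic graphs from all subleading connected graphs in the cumulant expansion, uniformly over the relevant configurations: this rests on sharp control of the iterated integrals $\int\prod K_n$ against the singular weight $\sin^{-(d-1)}$, and on extracting from the $K_n(x,y)^2$ asymptotics both the exact constant $C_d$ and the centered kernel $\hat h$ of \eqref{hath}. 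Once the dominant cycle structure is isolated, the combinatorial accounting of the factor $(m-1)!/2$ and the block factors $k(k-1)$ is routine.
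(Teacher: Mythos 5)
Your overall skeleton is the same as the paper's: cumulants via the graphical representation of Lemma \ref{soss}, identification of cycle-like graphs as the dominant contributions producing the trace $\sum_j z_j^m$ with the factor $\tfrac{1}{2}(m-1)!\,(k(k-1))^m$, and determinacy of the limiting second chaos via Carleman-type moment bounds. The genuine gap lies in the step you assign to ``power counting,'' namely the claim that single-cycle configurations are the unique connected configurations of maximal growth and that ``every other connected configuration loses at least one factor of $n^{d-1}$.'' That claim is false as stated, for two separate reasons, and both fixes are absent from your sketch. First, you never reduce to $f_1\equiv 0$. The margins are constant but a priori nonzero, and a connected graph in which one block is glued to the others at a single shared index, with $\sigma$ fixing that block's remaining indices (a \emph{reducible} graph in the sense of Definition \ref{def:red}), factorizes into $f_1\cdot(k_n/s_d)^{k-1}$ times a lower-order connected integral. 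Concretely, for $m\geq 3$ take $m-1$ blocks joined in a coincidence cycle plus one pendant block: then $\abs{\T}=mk-m$ and $a(\sigma)=0$, so its contribution has order exactly $n^{(d-1)(k-1)m}$ --- the same as your cycles, not smaller. These terms disappear only after one notes that $\#\Phi_n=k_n$ a.s., so replacing $f$ by $f$ minus its mean shifts $L_nf$ by a deterministic constant, leaves all cumulants of order $\geq 2$ unchanged, preserves \eqref{sym}, \eqref{dis} and $\hat h$, and forces $f_1\equiv 0$; with that normalization every reducible graph vanishes identically (the paper's Lemma \ref{anobs}). Without this centering your expansion contains uncontrolled threshold-order terms.

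Second, even with $f_1\equiv 0$, power counting cannot isolate the cycles. For irreducible graphs the paper proves $km-\abs{\T}+a(\sigma)/2\geq m$ (Lemma \ref{dm}), and equality is attained by graphs that are \emph{not} circle-like: for instance $k=2$, $m=3$, all six indices distinct, and $\sigma$ a product of two $3$-cycles, one through the first index of each block and one through the second. Such graphs sit exactly at the order $n^{(d-1)(k-1)m}$; the kernel bound of Lemma \ref{productcontrol} gives $O$ of the threshold and no saving of any power of $n$. The paper removes them by showing that in the equality case $\sigma$ must contain a cycle of length at least $3$ (second half of Lemma \ref{dm}) and then invoking Lemma \ref{keylemma}, a Riemann--Lebesgue cancellation in the product of Legendre/Bessel asymptotics along that $\sigma$-cycle, which upgrades $O$ to $o$; the need for oscillation is precisely why that lemma requires $r\geq 3$, since for $r=2$ the product of cosines contains a non-oscillating term. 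So the mechanism you need is cancellation by oscillation, not sharper upper bounds. Your $m=2$ computation, the combinatorial factors $(m-1)!/2$ and $k(k-1)$, and the identification of the limit through its cumulants and Carleman's condition all match the paper once these two ingredients are supplied.
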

Similar to Corollary \ref{cor:1}, using the fact that $k_n \sim 2n^{d-1}/\Gamma(d)$, and Theorems \ref{mean} and \ref{clt2}, we deduce the following result. 
\begin{cor}
	Under the assumptions of Theorem \ref{clt2}, we have 	
	\begin{equation}
		\begin{split}
			&\left( \frac{k_n}{s_d}\right)^{-k}  \left(\frac{C_{d} k(k-1)}{n^{d-1}}\right)^{-1}	\left(L_nf-
			\left(\frac{k_n}{s_d}\right)^k \int_{(S^d)^{ k}} f(x_1,\ldots,x_k)dx_1\cdots dx_k \right.\\
			&	\left.	+ \frac{k_n^{k-1}}{s_d^k}\sum_{1\leq i<j\leq k}
			\frac{2^{d-1}}{\Gamma(d)\pi} \Gamma\Big(\frac{d}{2}\Big)^2 \int_{S^d} \int_{S^d}  \frac{f_{i,j}(x,y)}{\sin^{d-1} (\arccos (x\cdot y)) } dxdy
			\right)  \\
			&	\xrightarrow{\textrm{d}} 
			\sum_{i=1}^{\infty} z_i (\chi_i-1)/2. 
		\end{split}
	\end{equation}
\end{cor}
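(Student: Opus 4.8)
The plan is to obtain this corollary as an immediate consequence of Theorem~\ref{clt2} and Theorem~\ref{mean} via Slutsky's theorem; the only substantive point is to check that replacing the exact mean $\E(L_nf)$ by its explicit two-term asymptotic expansion produces an error that is negligible once the normalization is applied. Throughout, write
\[
N_n:=\left(\frac{k_n}{s_d}\right)^{-k}\left(\frac{C_{d} k(k-1)}{n^{d-1}}\right)^{-1}
\]
for the common normalizing factor appearing both in \eqref{limcase2} and in the corollary.

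I would start from the conclusion \eqref{limcase2} of Theorem~\ref{clt2}, which (since the assumptions of Theorem~\ref{clt2} are in force) reads $\big(L_nf-\E(L_nf)\big)N_n \xrightarrow{\textrm{d}} \sum_{i=1}^\infty z_i(\chi_i-1)/2$. Denote by $M_n$ the explicit centering in the corollary, namely
\[
M_n:=\left(\frac{k_n}{s_d}\right)^k \int_{(S^d)^{k}} f(x_1,\ldots,x_k)\,dx_1\cdots dx_k - \frac{k_n^{k-1}}{s_d^k}\sum_{1\leq i<j\leq k}\frac{2^{d-1}}{\Gamma(d)\pi}\Gamma\Big(\frac{d}{2}\Big)^2 \int_{S^d}\int_{S^d}\frac{f_{i,j}(x,y)}{\sin^{d-1}(\arccos(x\cdot y))}\,dxdy,
\]
so that the quantity inside the large parentheses of the corollary is exactly $L_nf-M_n$. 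By Theorem~\ref{mean} we have $\E(L_nf)=M_n+o\big(n^{(d-1)(k-1)}\big)$, and therefore
\[
(L_nf-M_n)N_n=\big(L_nf-\E(L_nf)\big)N_n+\big(\E(L_nf)-M_n\big)N_n .
\]

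The decisive estimate is the order of $N_n$. Using $k_n\sim 2n^{d-1}/\Gamma(d)$ from \eqref{asykn}, one has $(k_n/s_d)^{-k}=\Theta\big(n^{-(d-1)k}\big)$ while $\big(C_{d}k(k-1)/n^{d-1}\big)^{-1}=\Theta\big(n^{d-1}\big)$, so that $N_n=\Theta\big(n^{-(d-1)(k-1)}\big)$. Hence the remainder obeys $\big(\E(L_nf)-M_n\big)N_n=o\big(n^{(d-1)(k-1)}\big)\cdot\Theta\big(n^{-(d-1)(k-1)}\big)=o(1)$ and converges to $0$ deterministically. Combining this with the distributional convergence of $\big(L_nf-\E(L_nf)\big)N_n$ recalled above and applying Slutsky's theorem gives $(L_nf-M_n)N_n \xrightarrow{\textrm{d}} \sum_{i=1}^\infty z_i(\chi_i-1)/2$, which is precisely the assertion. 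There is essentially no genuine obstacle here beyond this bookkeeping: the whole argument hinges on the fact that the remainder order $o(n^{(d-1)(k-1)})$ in Theorem~\ref{mean} is exactly cancelled (and then beaten) by the order $\Theta(n^{-(d-1)(k-1)})$ of $N_n$, which is the point I would verify with care.
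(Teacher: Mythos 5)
Your proposal is correct and follows exactly the paper's own (one-line) deduction: combine Theorem \ref{mean} with Theorem \ref{clt2}, use $k_n\sim 2n^{d-1}/\Gamma(d)$ to see that the normalizing factor is $\Theta\big(n^{-(d-1)(k-1)}\big)$, so the $o\big(n^{(d-1)(k-1)}\big)$ error in the mean expansion vanishes after normalization, and conclude by Slutsky. Your careful bookkeeping of the sign of the centering term and of the orders is precisely the verification the paper leaves implicit.
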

Note that the limiting distribution can be rewritten in the form of the 2nd order Wiener chaos
\beq \label{hermite} 
\sum_{i=1}^{\infty} z_i H_2(X_i)/2,
\eeq
where $H_2(x)=x^2-1$ is the Hermite polynomial of degree 2, and $X_i$ are independent and identically distributed (i.i.d.) standard Gaussian random variables $N(0,1)$.


There is a vast literature on the univariate linear statistics of \de processes, e.g., \cite{TY, AS2, AS}. 
There are also  very few works that give conditions for a Gaussian limit of multivariate linear statistics, e.g., \cite{BYY}.
But to the best of our knowledge,   Theorem \ref{clt2} is the very first result on the multivariate linear statistics for  \de processes beyond the Gaussian limit case. 


Theorem \ref{clt} and Theorem \ref{clt2} are proved  by the method of cumulants. We will first derive a graphical representation for the cumulants of the multivariate linear statistics for any \de process in Lemma \ref{soss}, which generalizes the well-known  formula for the univariate case (see \eqref{cummufor1} below). This graphical representation allows us to study the asymptotic properties of the cumulants by the  off-diagonal decay of the spectral projection kernel, where we have to prove Lemma \ref{productcontrol} and Lemma \ref{keylemma} bounding multiple integrals over the product of kernels. Exact identities and asymptotic expansions of the spectral projection kernels combined with the symmetry of the underlying space of the sphere  are two crucial ingredients for our proofs. For example, we repeatedly use the facts that the spectral projection kernel is  constant on the diagonal, and it satisfies very precise off-diagonal estimates  for all length scales, e.g., \eqref{hilbs}; the important fact that the integral operator $\hat h(x, y)$ defined in \eqref{hath} is symmetric is partially due to the symmetry of the sphere, etc. 

Contrary to the i.i.d. point process,  the \de process has the negative association property.   But our main results Theorem \ref{clt} and Theorem \ref{clt2} are still analogs of the classical Wiener chaos decomposition in the theory of U-statistics for i.i.d   random variables. 

Given i.i.d. random variables $X_{1}, \cdots, X_{n} $, 
Hoeffding's form for  U-statistics is the following (normalized) multivariate linear statistics, 
$$ 
U_{n}^{k}(g) ={n\choose k}^{-1} \sum_{1 \leq  i_{1} < \dots <i_{k} \leq  n }g (X_{i_{1}  }, \dots ,X_{i_{k}  } ),
$$  where $ g$ 
is a symmetric real-valued function of  $k$
variables. 

Without loss of generality, we assume $\mathbb E(g(X_1,.., X_k))=0$. 
Then Hoeffding in 1948 proved that,  if the variance $\text{Var}(g(X_1,.., X_k))<\infty$, then the following central limit theorem holds (Corollary 11.5 in \cite{JS}),
\beq \label{centld}n^{1/2}U^k_n(g)\xrightarrow{\textrm{d}}  N(0, k^2\delta_1).\end{equation}
Here, the constant $\delta_1$ is the variance  $$\delta_1=\text{Var}(g_1(X_1)),$$
where $$g_1(x):=\mathbb E(g(x, X_2,.., X_k)).$$
If the variance $\delta_1$ vanishes, that is the  limit of U-statistics for i.i.d. random variables is degenerate, then a  $\chi^2$-limit theorem holds for the rescaled statistics. To be  more precise, 
we suppose that 
$g_1(x)=\mathbb Eg(x, X_2,.., X_k)=0$  and $\mathbb E g^2(X_1,.., X_k)<\infty$,  then  we have (Corollary 11.5 in \cite{JS}),
\beq \label{xhic}nU^k_n(g) \xrightarrow{\textrm{d}} {k\choose 2} \sum_{i=1}^\infty \lambda_iH_2(Y_i), \eeq
where $H_2(x)=x^2-1$ is the Hermite polynomials of degree 2, $Y_i$ are i.i.d.  standard Gaussian random variables, and $\lambda_i$ are eigenvalues of the integral operator $A$ defined as follows.
Let  $d\mu$ be the probability density of the random variable $X_1$ and  set $$g_2(x, y):=\mathbb Eg(x, y, X_3,.., X_k).$$ 
For any bounded measurable function $f$, 
the operator $A$ is define by
\beq \label{aope}(A f)(y)=\int g_2(x, y) f(x) d\mu(x).\end{equation}
The formats of  results \eqref{centld} and \eqref{xhic}  are almost identical  to  Theorem \ref{clt} and  Theorem \ref{clt2}, respectively. The roles of $g_1(x_1)$ and $g_2(x_1, x_2)$ are replaced by  the $i$-margin function $f_i(x)$ and the $(i,j)$-margin function $f_{i,j}(x,y)$  respectively; when the variance vanishes, both the limiting distributions are the linear eigenvalue combination of   $H_2(Y_i)$, where the role of the symmetric integral operator $A$ is replaced by $\hat h(x,y)$.



In general, $U_n^k$ may exhibit the convergence in distribution  to the Wiener chaos with arbitrary order (Theorem 11.3 in \cite{JS}).  For example, for the primitive completely degenerate case where 
$$
g\left(x_1, \ldots, x_k\right)=\prod_{i=1}^k \mathfrak g\left(x_i\right)
$$
with $\mathbb{E} \mathfrak g \left(X_1\right)=0$ and $\mathbb{E} \mathfrak g^2\left(X_1\right)=\sigma^2<\infty$, one has the   convergence
\beq\label{hermk}
\frac{n^{k / 2} U_n^k\left(g\right)}{\sigma^k} \xrightarrow{\textrm{d}}  H_k(Y),
\eeq
where $H_k(x)$ is the Hermite polynomial of degree $k$ and $Y$ is the standard Gaussian random variable. 


Therefore,  we may expect that the multivariate linear statistics of the \de process associated with the spectral projection kernel on $S^d$ also admits some  kind of Wiener chaos decomposition. Actually, our method, especially the representation formula in Lemma  \ref{soss}, can be applied to any other determinantal point process such as  CUE, GUE,  the complex Ginibre ensemble  in random matrix theory and Gaussian analytic functions in random polynomial theory. And the similar results may hold as well, but note that one has to change the conditions especially \eqref{dis} for the test functions  to others according to the symmetry and the invariance of the underlying space and the kernel.  

\bigskip

\emph{Notation.}  In this paper, we use $C$ (or $c$) to denote some constants independent of $n$, whose specific values may change from line to line.
For a sequence of numbers $a_n$ and $b_n$, we write $a_n=o(b_n)$ if 
$b_n\neq 0$ and $\lim_{n\to\infty}{a_n}/{b_n}= 0$;  $a_n=O(b_n)$ if there exists some constant $C$ such that $\abs{a_n}\leq C\abs{b_n}$; $a_n=\Theta(b_n)$ if  $a_n=O(b_n)$  and $b_n=O(a_n)$; $a_n\sim  b_n$ if  $\lim_{n\to\infty} a_n/b_n = 1$.


\section{A graphical representation of cumulants}\label{repformula}
In this section we will derive a graphical representation  of the cumulants for the multivariate linear statistics of any determinantal point process. 

Given a random variable $X$, its $m$-th cumulant $Q_m(X)$ is defined to be the coefficient in the formal expansion of $\log \mathbb{E} \exp(\i tX)$, 
\beq
\log \mathbb{E}\exp(\i tX)=\sum_{m=1}^{\infty} \frac{Q_m(X)}{m!}(\i t)^m.
\eeq
A  \emph{partition} of a set $S$
is an unordered collection $R=\{R_1,\ldots, R_{\ell}\}$  of nonempty subsets of $S$
where  $\ell$ is some positive integer not exceeding $\abs{S}$. In addition,  $R$ satisfies the following two conditions:
\begin{itemize}
\item  $R_i\cap R_j=\emptyset$ for $i\neq j$.
\item $\cup_{i=1}^\ell R_i=S$.
\end{itemize}

Let $m$ be any positive integer. We denote by $\Pi(m)$  the set of partitions of $\{1,2,\cdots, m\}$.
The moments of $X$ can be derived from its cumulants as follows,
\beq\label{moment_cummu}
\mathbb{E}(X^m)=\sum_{ R=\{R_1, \ldots, R_{\ell}\} \in \Pi(m)   }Q_{\abs{R_1}}\ldots Q_{\abs{R_{\ell}}}.
\eeq
On the other hand, the cumulants can be expressed by moments as 
\beq\label{cummu_moment}
Q_m(X)=\sum_{R=\{R_1,\ldots, R_{\ell}\} \in \Pi(m)} (-1)^{\ell-1}(\ell-1)!\Pi_{i=1}^{\ell} \mathbb{E}X^{\abs{R_i}}.
\eeq
Some simple properties of cumulants include $$Q_1(X)=\E(X),\,\, Q_2(X)=\mathrm{\text{Var}}(X), \,\,Q_m(cX)=c^m Q_m(X).$$ If $X$ is a Gaussian random variable, then  $Q_m(X)=0$ for all $m\geq 3$. 

Similarly to the method of moments, to show that  $X_n$ converges in distribution to $X$, it suffices to prove that the $m$-th cumulant of $X_n$ converges to $Q_m(X)$ for all fixed $m$ (as long as the limit is uniquely determined by its cumulants).  For the special case that $X$ is Gaussian distributed and $X_n$ has mean 0,
it suffices to prove
$$
\lim\limits_{n\to\infty} \frac{Q_m(X_n)}{\mathrm{\text{Var}}(X_n)^{\frac{m}{2}}}=0
$$  for all sufficiently large $m$ (\cite[Lemma 3]{AS}). 
Let $\Phi$ be a determinantal point process  on the space $\mathcal X$ associated with the kernel $K(x,y)$.  In the followings,  we will derive a formula for the cumulants of the
multivariate linear statistics. We will expand the $m$-th power of the multivariate linear statistics and express it in the form of  \eqref{moment_cummu}, then  the formula for the cumulants can be found directly from this expression. 

To expand $(\sum_{(x_1,\ldots, x_k)\in \Phi_{*}^k}f(x_1,..,x_k))^m$,  we have
$km$ points $x_1,\ldots, x_{mk}$ (counting multiplicities)
appearing in 
the product $f(x_1,\ldots, x_k)\cdots f(x_{mk-k+1},\ldots,x_{mk})$.  We write $y_{i,j}:=x_{(i-1)k+j}$ for $1\leq i\leq m, \,1\leq j\leq k$, and set $\y_i:=(y_{i,1},\ldots, y_{i,k})$.
Then  we have 	\begin{equation}
\left( \sum_{(x_1,\ldots, x_k)\in \Phi_{*}^k} f(x_1,\ldots, x_k) \right)^m
=\sum_{\y_1,\ldots, \y_m\in \Phi_*^k} f(\y_1)\cdots f(\y_m). 
\end{equation}
We first introduce a notation: given any positive integer $p$, we define the set $$[p]:=\big\{1,\ldots, p\big\}.$$
To find the relations among the points $x_1,\ldots, x_{mk}$, we define by $$M(m,k):=\text{Map}([m],{[km]^k_*})$$  the set of  all maps  from $[m] $ to 
\beq \label{inteset}
[km]^{ k}_*:=\Big\{(i_1,\ldots, i_k) \in [km]^{ k}: i_j\neq i_{\ell}, \,\,\forall \, 1\leq j< \ell \leq k\Big\}.
\eeq  	
To be more precise,  let $\T$ be an element in  $M(m,k)$, then we can rewrite it as 
$$\T:=(T_1,\ldots, T_m),$$
where each $T_i$ is the image of $i\in \{1,2,.., m\}$ under the map $\T$ and   $$T_i\in \Big\{(i_1,\ldots, i_k): i_j\in [km]\,\, \text{and}\,\, i_j\neq i_{\ell}, \,\,\, \forall \, 1\leq j< \ell \leq k\Big\}.$$
We also write $T_{i}=(T_{i,1},\ldots, T_{i,k})$ where $T_{i,j}$ is the $j$-th component of the $k$-tuple $T_i$.
For example, when $m=3$ and $k=2$, then $\T,\T',\T''$ defined as follows   all belong to  $M(3,2)$, 
\beq\label{defT}
T_1=(1,2), T_2=(1,4), T_3=(2,4).
\eeq
\beq\label{defT'}
T'_1=(1,3),T'_2=(1,6),T'_3=(3, 6).
\eeq
\beq\label{defT''}
T''_1=(1,2),T''_2=(1,4),T''_3=(5,6).
\eeq
We say two maps $\T, \hat{\T} \in  M(m,k)$ are \emph{equivalent} if they differ by a permutation of $[km]$, i.e. by composing with a permutation  they become the same map.  We denote by $S(m,k)$ the set of all equivalence classes of $M(m,k)$. 
As an example, the $\T$ and $\T'$ defined in \eqref{defT} and \eqref{defT'} are
equivalent since the permutation (23)(46) brings $\T$ to $\T'$. But $\T''$ defined in \eqref{defT''} is neither equivalent to $\T$ nor $\T'$. 

For any $\T \in   M(m,k)$,  we can construct  a graph for it, which we call \emph{$\T$-graph}. The $\T$-graph is constructed in two steps. Initially there 
are $mk$ vertices in total, indexed by $(i,j)$ for $1\leq i\leq m,1\leq j\leq k$. 
First for each $1\leq i\leq m$ and $1\leq j\leq k-1$, we draw a black edge between $T_{i,j}$ and $T_{i,j+1}$.
Then for any $(i,j)\neq (i',j')$ such that $T_{i,j}=T_{i',j'}$, we use a solid red edge to connect $(i,j)$ and $(i',j')$.
See Figure \ref{Example22} for the graphical representations of $\T$, $\T'$ and $\T''$.   One can see that
if $\T$ is equivalent to $\hat{\T}$, then  $\T$-graph is the same as  $\hat{\T}$-graph, and vice versa. Consequently, each equivalence class in $S(m,k)$ can be identified with a $\T$-graph.

%
%
%

\begin{figure}[h!]
\begin{center}
\begin{tikzpicture}
	\draw (0,0)--(2,0);
	\draw (-0.3,0.3)--(0.7,1.7);
	\draw (1.3, 1.7)--(2.3,0.3);
	\draw (0.5,2.0) node {$(1,1)$};
	\draw  (-0.3,-0.3) node {$(2,1)$};
	\draw  (-0.7, 0.3) node {$(1,2)$};
	\draw (1.5,2.0) node {$(3,2)$};
	\draw (2.3, -0.3) node {$(2,2)$};
	\draw (2.7, 0.3) node {$(3,1)$};
	\draw (0, 1.3) node {${T_1}$};
	\draw  (1.9, 1.3) node {${T_3}$};
	\draw (0.9,-0.4) node {${T_2}$};
	\draw[red] (0.7,1.7)--(0,0); 
	\draw[red] (-0.3,0.3)--(2.3,0.3); 
	\draw[red] (2.0,0)--(1.3,1.7);
	
	\draw (4.3,0)--(6.3,0);
	\draw (4,0.3)--(5,1.7);
	\draw (5.6, 1.7)--(6.6,0.3);
	\draw (4.8,2.0) node {$(1,1)$};
	\draw  (4,-0.3) node {$(2,1)$};
	\draw  (3.6, 0.3) node {$(1,2)$};
	\draw (5.8,2.0) node {$(3,2)$};
	\draw (6.6, -0.3) node {$(2,2)$};
	\draw (7, 0.3) node {$(3,1)$};
	\draw (4.3, 1.3) node {${T'_1}$};
	\draw  (6.2, 1.3) node {${T'_3}$};
	\draw (5.2,-0.4) node {${T'_2}$};
	\draw[red] (5,1.7)--(4.3,0); 
	\draw[red] (4,0.3)--(6.6,0.3); 
	\draw[red] (6.3,0)--(5.6,1.7);

	\draw (8.6,0)--(10.6,0);
	\draw (8.3,0.3)--(9.4,1.7);
	\draw (9.9, 1.7)--(10.9,0.3);
	\draw (9.1,2.0) node {$(1,1)$};
	\draw  (8.3,-0.3) node {$(2,1)$};
	\draw  (7.9, 0.3) node {$(1,2)$};
	\draw (10.1,2.0) node {$(3,2)$};
	\draw (10.9, -0.3) node {$(2,2)$};
	\draw (11.3, 0.3) node {$(3,1)$};
	\draw (8.6, 1.3) node {${T''_1}$};
	\draw  (10.5, 1.3) node {${T''_3}$};
	\draw (9.5,-0.4) node {${T''_2}$};
	\draw[red] (9.4,1.7)--(8.6,0);

\end{tikzpicture} 
\end{center}
\caption{Graphical view of $\T$ (left), $\T'$ (middle) and $\T''$ (right)}
\label{Example22}
\end{figure}
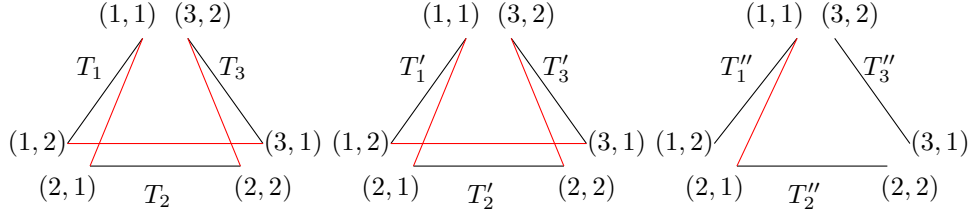


For $\T \in M(m,k)$, we define the size and the range of $\T$ as
$$\abs{\T}:=\abs{\cup_{i=1}^m T_i}, \,\,\,\text{Range}(\T)=\cup T_i.$$
For example, for the $\T$ defined in \eqref{defT} we have
$\abs{\T}=3$ and $\text{Range}(\T)=\{1,2,4\}.$

For notational simplicity, for a collection of indices ${\bf t}=(t_1,\ldots, t_k)$, we define $f(\textbf{t}):=f(x_{t_1},x_{t_2},\ldots, x_{t_k})$; by an abuse of nation, for $\T=(T_1,\ldots, T_m)$, we set $f(\T)=\Pi_{i=1}^m f(T_i)$; and we write $d\x$
as the volume element involved in the integration. 
By the definition of the determinantal point process, we have 
\begin{equation}\label{mom-permu}
\begin{split}
&\E\left[\left(\sum_{(x_1,\ldots,x_k) \in \Phi_{*}^k} f(x_1,\ldots, x_k)\right)^m\right]\\ =& \sum_{\T \in S(m,k)} \int_{\mathcal X^{\abs{\T}}} f(\T)\det\Big(K(x_i,x_j)_{ { i,j\in \text{Range}(\T)}}\Big)  d\x\\
=&\sum_{\T \in S(m,k)} \sum_{\sigma \in \text{Sym}(\text{Range}(\T))}  
\int_{\mathcal X^{\abs{\T}}} f(\T) {\text{sgn}(\sigma)}\Pi_{q\in \textrm{Range}(\T)}K(x_q,x_{\sigma(q)})   d\x.
\end{split}
\end{equation}
Here,  for any set $A$, $\text{Sym}(A)$ is the set of all permutations of the elements in $A$, and sgn($\sigma$) is the sign of the permutation $\sigma$.

For any $\T$ and $\sigma\in \text{Sym}(\text{Range}(\T))$
we can further construct a \emph{$(\T,\sigma)$-graph} $G$ by
adding dotted  red edges to the $\T$-graph. Specifically, for any $T_{i,j}\neq T_{i',j'}$, we add a dotted red edge between two vertices $(i,j)$ and $(i',j')$ if $\sigma(T_{i,j})=T_{i',j'}$ or $\sigma(T_{i',j'})=T_{i,j}$.  We say the pair $(\T,\sigma)$ is \emph{connected} if the $(\T,\sigma)$-graph is connected. 

For example, for  $\T$ defined in \eqref{defT},
the $(\T,\sigma)$-graph $G$ is connected for any $\sigma\in \text{Sym}(\{1,2,4\})$  because the $\T$-graph itself is already connected. 
On the other hand, for $\T''$ defined in   \eqref{defT''}, 
if $\sigma=id$ (the identity in the permutation group), then the $(\T'',\sigma)$-graph has two components.
However,  for $\sigma=(15)\in \text{Sym}(\{1,2,4, 5,6\})$  the $(\T'',\sigma)$-graph becomes connected.

\begin{figure}[h!] 
\begin{center}
\begin{tikzpicture}
	\draw (0,0)--(2,0);
	\draw (-0.3,0.3)--(0.7,1.7);
	\draw (1.3, 1.7)--(2.3,0.3);
	\draw (0.5,2.0) node {$(1,1)$};
	\draw  (-0.3,-0.3) node {$(2,1)$};
	\draw  (-0.7, 0.3) node {$(1,2)$};
	\draw (1.5,2.0) node {$(3,2)$};
	\draw (2.3, -0.3) node {$(2,2)$};
	\draw (2.7, 0.3) node {$(3,1)$};
	\draw (0, 1.3) node {${T_1}$};
	\draw  (1.9, 1.3) node {${T_3}$};
	\draw (0.9,-0.4) node {${T_2}$};
	\draw[red] (0.7,1.7)--(0,0); 
	\draw[red] (-0.3,0.3)--(2.3,0.3); 
	\draw[red] (2.0,0)--(1.3,1.7);
	
	\draw (4.3,0)--(6.3,0);
	\draw (4,0.3)--(5,1.7);
	\draw (5.6, 1.7)--(6.6,0.3);
	\draw (4.8,2.0) node {$(1,1)$};
	\draw  (4,-0.3) node {$(2,1)$};
	\draw  (3.6, 0.3) node {$(1,2)$};
	\draw (5.8,2.0) node {$(3,2)$};
	\draw (6.6, -0.3) node {$(2,2)$};
	\draw (7, 0.3) node {$(3,1)$};
	\draw (4.3, 1.3) node {${T''_1}$};
	\draw  (6.2, 1.3) node {${T''_3}$};
	\draw (5.2,-0.4) node {${T''_2}$};
	\draw[red] (5,1.7)--(4.3,0); 

	\draw (8.6,0)--(10.6,0);
	\draw (8.3,0.3)--(9.4,1.7);
	\draw (9.9, 1.7)--(10.9,0.3);	
	\draw (9.1,2.0) node {$(1,1)$};
	\draw  (8.3,-0.3) node {$(2,1)$};
	\draw  (7.9, 0.3) node {$(1,2)$};
	\draw (10.1,2.0) node {$(3,2)$};
	\draw (10.9, -0.3) node {$(2,2)$};
	\draw (11.3, 0.3) node {$(3,1)$};
	\draw (8.6, 1.3) node {${T''_1}$};
	\draw  (10.5, 1.3) node {${T''_3}$};
	\draw (9.5,-0.4) node {${T''_2}$};
	\draw[red] (9.4,1.7)--(8.6,0); 
	\draw[red, dotted, thick] (9.4, 1.7)--(10.9,0.3); 
	\draw[red, dotted, thick] (8.6,0)--(10.9,0.3);
\end{tikzpicture} 
\end{center}
\caption{$\T$  in \eqref{defT}, $\sigma=id$ (left);
$\T''$  in \eqref{defT''}, $\sigma=id$ (middle);
$\T''$ in \eqref{defT''}, $\sigma=(15)$ (right).
}
\label{Example2}
\end{figure}

If a $(\T,\sigma)$-graph $G$ has $\ell$ connected components, then $G$  naturally induces a 
partition $R$ of $[m]$  into $\ell$ disjoint sets $\{R_1,\ldots, R_{\ell}\}$. 
For $1\leq j\leq \ell$, we set
$$
H_j:=\cup_{i\in R_j} T_i,  \,\, \sigma_j= \mbox{the restriction of } \sigma \mbox{ to } H_j.$$
Let $f(\T|_{R_j})=\prod_{i\in R_j}f(T_i)$.
 Then for the integral $$  
\int_{\mathcal X^{\abs{\T}}} f(T_1)f(T_2)\ldots f(T_m) {\text{sgn}(\sigma)}\Pi_{q\in \textrm{Range}(\T) }K(x_q,x_{\sigma(q)})   d\x, 
$$
we can split it into  a product of exactly $\ell$ integrals
$$
\prod_{j=1}^{\ell} \left( \int_{\mathcal X^{\abs{H_j}}} \textrm{sgn}(\sigma_j)
f(\T|_{R_j}) \prod_{q\in H_j}K_n(x_{q}, x_{\sigma_j(q)}) dx_{q}
\right).
$$
  For any integer-valued $r$, we define 	\begin{equation}\label{def*}\begin{split}
\mathcal{C}(r):=&\Big\{(\T,\sigma): \T\in S(r,k), \sigma \in \text{Sym}(\text{Range}(\T)),\\ &\, (\T,\sigma)\mbox{-graph is connected}
\Big\}.\end{split}
\end{equation}
The definition of $\{R_1,\ldots, R_{\ell}\}$ implies that, for each $1\leq j\leq \ell$, the pair $(\T|_{R_j}, \sigma_j)$ is in $\mathcal{C}(\abs{R_j}).$
Therefore, we have
\begin{equation}\label{findcumu}
\begin{split}
&\sum_{\T \in S(m,k)} \sum_{\sigma \in \text{Sym}(\text{Range}(\T))}  
\int_{\mathcal X^{\abs{\T}}} f(\T) {\text{sgn}(\sigma)}\prod_{q\in \textrm{Range}(\T)}K(x_q,x_{\sigma(q)})   d\x \\
=&\sum_{R=\{R_1,\ldots, R_{\ell}\} \in \Pi(m)}  
\prod_{j=1}^{\ell} 
\left(\sum_{(\T, \sigma) \in \mathcal{C}(\abs{R_j})} \mathfrak{Int}(f, (\T,\sigma))\right),
\end{split}
\end{equation}
where
$$
\mathfrak{Int}(f, (\T,\sigma)):=\int_{\mathcal X^{\abs{\T } }} \left(
f(\T)  \text{sgn}(\sigma) \prod_{q\in \textrm{Range}(\T)} K(x_q,x_{\sigma(q)})\right)d\x.
$$

Combining   \eqref{moment_cummu} \eqref{mom-permu} and \eqref{findcumu}, 
we obtain the following formula for the cumulants of multivariate linear statistics of general \de processes.
\begin{lem}\label{soss}
\begin{equation}\label{express cumu}
\begin{split}
	&Q_m\left(\sum_{(x_1,\ldots,x_k) \in \Phi_{*}^k} f(x_1,\ldots, x_k)\right)\\
	=&\sum_{(\T, \sigma) \in \mathcal C{(m)}} \int_{\mathcal X^{\abs{\T}}} f(\T)  {\operatorname{sgn}(\sigma)} \prod_{q \in\operatorname{Range}(\T) } K(x_q,x_{\sigma(q)})d\x.
\end{split}
\end{equation}\end{lem}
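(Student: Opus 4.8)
The plan is to identify the cumulant $Q_m(X)$, where $X := \sum_{(x_1,\ldots,x_k)\in\Phi_*^k} f(x_1,\ldots,x_k)$, by comparing two expressions for the moments $\E(X^m)$: the universal moment--cumulant relation \eqref{moment_cummu}, which writes $\E(X^m)$ as a sum over partitions of $[m]$ of products of cumulants, against the determinantal expansion \eqref{mom-permu}--\eqref{findcumu}, which writes the same quantity as a sum over partitions of products of connected contributions. Matching the two term by term, the weight attached to each block will turn out to be exactly a cumulant, which is precisely \eqref{express cumu}.

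First I would establish the moment expansion \eqref{mom-permu}. Expanding $X^m$ produces $km$ points counted with multiplicity; grouping the resulting index-tuples into equivalence classes $\T\in S(m,k)$ records exactly which points coincide, so that applying the joint intensity formula \eqref{rhodet} and expanding the determinant over $\operatorname{Sym}(\operatorname{Range}(\T))$ yields \eqref{mom-permu}. Next, for a fixed pair $(\T,\sigma)$ I would decompose the $(\T,\sigma)$-graph into its $\ell$ connected components. Since no black or red edge joins two distinct components, both the product $f(\T)=\prod_i f(T_i)$ and the kernel product $\prod_{q}K(x_q,x_{\sigma(q)})$ factor along the components, the sign splits as $\operatorname{sgn}(\sigma)=\prod_{j}\operatorname{sgn}(\sigma_j)$, and the integral over $\mathcal X^{|\T|}$ separates into the product $\prod_{j=1}^{\ell}\mathfrak{Int}(f,(\T|_{R_j},\sigma_j))$.

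The key combinatorial step is then to reorganize the double sum in \eqref{mom-permu} according to the induced partition. Each $(\T,\sigma)$ with $\ell$ components determines a partition $R=\{R_1,\ldots,R_\ell\}\in\Pi(m)$; conversely, specifying $R$ together with a connected configuration $(\T|_{R_j},\sigma_j)\in\mathcal C(|R_j|)$ on each block recovers $(\T,\sigma)$ after the obvious relabeling of the underlying index set, consistent with passing to equivalence classes in $S(\cdot,k)$. This bijection turns \eqref{mom-permu} into \eqref{findcumu}, namely $\E(X^m)=\sum_{R\in\Pi(m)}\prod_{j=1}^{\ell}\widetilde Q_{|R_j|}$, where I set $\widetilde Q_r:=\sum_{(\T,\sigma)\in\mathcal C(r)}\mathfrak{Int}(f,(\T,\sigma))$. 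I expect this relabeling and equivalence-class bookkeeping --- checking that the components range independently and exactly once over the connected classes $\mathcal C(|R_j|)$ --- to be the main obstacle, since it is where the determinantal structure and the quotient by permutations of $[km]$ interact most delicately.

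Finally I would compare this with \eqref{moment_cummu}. Both express the same numbers $\E(X^m)$ in the form $\sum_{R\in\Pi(m)}\prod_j(\,\cdot\,)_{|R_j|}$, with block weights $Q_r$ in one case and $\widetilde Q_r$ in the other. Because the one-block partition contributes exactly $Q_m$ (respectively $\widetilde Q_m$) while every other partition involves only strictly smaller indices, this is an invertible triangular system; an induction on $m$, with base case $\E(X)=Q_1=\widetilde Q_1$, then forces $Q_r=\widetilde Q_r$ for all $r$. Taking $r=m$ gives \eqref{express cumu}.
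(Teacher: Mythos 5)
Your proposal is correct and follows essentially the same route as the paper: expand $\E(X^m)$ over equivalence classes $\T\in S(m,k)$ and permutations of $\operatorname{Range}(\T)$, factor each term along the connected components of the $(\T,\sigma)$-graph to obtain the partition-indexed product form \eqref{findcumu}, and then match against the moment--cumulant relation \eqref{moment_cummu}. The only difference is presentational: you spell out the final identification as a triangular/inductive inversion, which the paper leaves implicit in "combining \eqref{moment_cummu}, \eqref{mom-permu} and \eqref{findcumu}."
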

For $k=1$, \eqref{express cumu} gives the following well-known formula  (Formula (2.7) in \cite{AS2}), 
\begin{equation}\label{cummufor1}
\begin{split}
&Q_m\left(\sum_{x\in \Phi} f(x)\right)\\=&\sum_{\ell=1}^m \sum_{(n_1, \ldots, n_{\ell}): \sum_{j=1}^{\ell} n_j=m, n_j\geq 1, \forall j} \frac{(-1)^{\ell-1}}{\ell}\frac{m!}{n_1!\ldots n_{\ell}!}\\
& \int_{\mathcal X^{\ell}} f^{n_1}(x_1)\cdots 
f^{n_{\ell}}(x_{\ell}) K(x_1,x_2)\cdots K(x_{\ell-1},x_{\ell})  K(x_{\ell},x_1)d\x.
\end{split}
\end{equation}
Indeed, by the definition of $S(m,1)$,
each $\T\in S(m,1)$ corresponds to one way of assigning $m$ different balls into $\ell$ indistinguishable urns for some $\ell$. Hence the $\T$-graph  itself has $\ell$ components and also  partitions the set $[m]$ into $\ell$ components.  Thus, to ensure the $(\T,\sigma)$-graph is connected, 
different components have to be linked through  $\sigma\in \text{Sym}(\text{Range}(\T))$, which implies that $\sigma $ has to be a cyclic permutation of length $\ell$. 
As an example, suppose $k=1$, $m=5$ and $\T=\{1,2,3,3,3\}$, then $\abs{\T}=3$ and $\sigma$ has to be  (123) or (132) to obtain a connected $(\T, \sigma)$-graph. 

For later reference, we introduce a few more concepts. 
\begin{defn}\label{def:con}
For a $(\T, \sigma)$-graph, we 
say $T_{i,j}$ is a \emph{connection point} if at least one of the two conditions are satisfied:
\begin{itemize}
\item  $\sigma(T_{i,j})\notin T_i$.
\item There exists an $i'\neq i$ such that $T_{i,j}\in T_{i'}$.
\end{itemize}	Equivalently, using the graphical representation of a $(\T,\sigma)$-graph, $T_{i,j}$ is a connection point if $(i,j)$ is connected to some vertex in $\{(i',j'):i'\neq i,1\leq j'\leq k\}$ by a red edge, either solid or dotted.
\end{defn}
Note that, if the $(\T,\sigma)$-graph is connected, then for each $i$, there must exist at least one connection point $T_{i,j}$. 
\begin{defn}\label{def:red}
We say a $(\T,\sigma)$ pair is \emph{reducible} if its $(\T,\sigma)$-graph is connected and there exists an $i\in [m]$ and a $j\in [k]$ such that 
\begin{itemize}
\item $T_{i,j}$ is the only connection point in $T_i$.
\item $\sigma(x)=x, \forall x\in T_i-\{T_{i,j}\}$.
\end{itemize}
If the above two conditions hold,  then we say  the $(\T,\sigma)$-graph \emph{breaks} at $T_{i,j}$ and $T_{i,j}$ is a \emph{break point}. 
Equivalently, $(\T,\sigma)$-graph is reducible if it is connected and there exists some $(i,j)$ which is the only vertex in $\{(i,j):1\leq j\leq k\}$ that can have red edge(s) connecting with other vertices. We say a $(\T,\sigma)$ pair is \emph{irreducible} if it is not reducible. 
\end{defn}
We define $\mathfrak{I}(m)$ to be the set of all $(\T,\sigma)\in \mathcal{C}(m)$ that are irreducible, i.e., 
\begin{equation}\label{defirm}
\mathfrak{I}(m):=\{(\T,\sigma)\in \mathcal{C}(m): (\T,\sigma) \mbox{ is irreducible}\}.
\end{equation}
An example of the reducible graph is given by the right panel of Figure \ref{Example2},  while the left and the middle ones in Figure \ref{Example2} are irreducible. 
\begin{defn}\label{defcircle}
We say a $(\T,\sigma)\in \mathfrak{I}(m)$ is  \emph{circle-like}  if 
for each $1\leq i\leq m$, there are exactly two distinct numbers $1\leq i_1\neq i_2\leq k$ such that each of $(i,i_1)$ and $(i,i_2)$ has exactly one red edge and the red edge is connected to a vertex in $\{(i',j'):i'\neq i, 1\leq j' \leq k\}$, and all other vertices, i.e., those not in the set $\{(i,i_1), (i,i_2):1\leq i\leq m\}$, have no red edge. 
\end{defn} 
The following proposition explains the name `circle-like'. 
\begin{prop}\label{prop:circle-like}
Let $(\T,\sigma)$ be  circle-like.  Then 
there exists a cyclic permutation $p$ of $\{1,\ldots, m\}$  such that, 
for each $1\leq i\leq m$, there exist two distinct indices $i_1$ and $i_2$ and that $(i,i_2)$ is connected with $(p(i),p(i)_1)$ with a red edge. 
\end{prop}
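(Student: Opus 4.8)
The plan is to pass from the $(\T,\sigma)$-graph, which lives on the $mk$ vertices $(i,j)$, to a coarser multigraph on the $m$ blocks, and to show that this coarser object is forced to be a single cycle. First I would record the two structural facts supplied by the circle-like hypothesis: within each block $i$ the vertices $(i,1),\ldots,(i,k)$ are joined into a path by the black edges, so each block is internally connected; and each block carries exactly two red-edge endpoints, namely $(i,i_1)$ and $(i,i_2)$, each of which has exactly one red edge leaving it, and that edge lands in a different block. Since the components of each $T_i$ are distinct (as $T_i\in[km]^k_*$) there are no red edges inside a block, so every red edge has its two endpoints in two distinct blocks.

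Next I would define the \emph{block multigraph} $H$ on the vertex set $\{1,\ldots,m\}$ by placing, for every red edge joining a vertex of block $i$ to a vertex of block $i'$, one edge between $i$ and $i'$ in $H$. Because block $i$ has precisely two red-edge endpoints and each carries exactly one red edge to another block, every vertex of $H$ has degree exactly $2$; and because $(\T,\sigma)\in\mathfrak{I}(m)\subset\mathcal C(m)$ is connected while each block is internally connected by black edges, $H$ is connected. A connected $2$-regular multigraph on $m$ vertices is a single cycle of length $m$, which determines a cyclic permutation $p$ of $\{1,\ldots,m\}$, where $p(i)$ is the successor of $i$ along a fixed orientation of the cycle.

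With $p$ in hand I would fix the port labels so as to match the orientation: for each block $i$ I declare $(i,i_1)$ to be the port whose red edge goes to the predecessor $p^{-1}(i)$, and $(i,i_2)$ the port whose red edge goes to the successor $p(i)$. For $m\ge 3$ the cycle has no repeated edge, since a double edge between two blocks would exhaust the degree of both and isolate them, contradicting connectivity; hence the two ports of each block are distinct and this assignment is unambiguous. By construction the red edge at $(i,i_2)$ terminates in block $p(i)$, and, the red edges forming a perfect matching on the $2m$ port vertices, the port of $p(i)$ meeting block $i=p^{-1}(p(i))$ is exactly the one labeled $(p(i),p(i)_1)$; therefore $(i,i_2)$ is joined to $(p(i),p(i)_1)$ by a red edge, which is the assertion of the proposition.

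The only genuine obstacle is verifying that the port-labeling is globally consistent, i.e.\ that the successor-based and predecessor-based assignments agree edge by edge; this is where the perfect-matching observation does the work, guaranteeing that ``the port of $p(i)$ meeting $i$'' is uniquely determined. The single degenerate configuration that must be dispatched by hand is $m=2$, where both red edges run between the two blocks and the cycle is a digon, so that the labeling cannot be read off from the target block alone but must be taken directly from the matching of the four port vertices.
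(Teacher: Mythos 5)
Your proof is correct and follows essentially the same route as the paper's: you contract each block $\{(i,j):1\leq j\leq k\}$ to a single vertex (your block multigraph $H$), observe that circle-likeness plus connectivity forces $H$ to be a connected $2$-regular multigraph and hence a cycle, then orient the cycle to define $p$ and relabel the two ports $i_1,i_2$ accordingly. Your extra care with the perfect-matching consistency and the $m=2$ digon case makes explicit what the paper's terse ``necessarily a circle of size $m$'' leaves implicit, but the underlying argument is identical.
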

\begin{proof}
Note that, by the definition of being circle-like, if we contract 
all vertices in $\{(i,j): 1\leq j\leq k\}$ into a single vertex (and give it label $i$), then we will obtain a connected graph with $m$ vertices such that each vertex has degree 2, which is then necessarily a circle of size $m$. Fix a direction of the circle, suppose the label of these vertices are $a_1, \ldots, a_m$. Then we can define a permutation $p$ such that $p(a_i)=a_{i+1}$ where $a_{m+1}:=a_1$.  In addition,  by reordering $i_1$ and $i_2$ for each $1\leq i\leq m$ if needed, we can assume that $(a_i, (a_i)_2)$ is connected with $(a_{i+1}, (a_{i+1})_1)$ for all $1\leq i\leq m$ with a red edge.  This 
completes the proof.
\end{proof}

As a remark, we will see that in the proof of Theorem \ref{clt2} for the degenerate case, the collections of the cycle-like $(\T,\sigma)$-graph will provide the leading order term for the cumulants of multivariate linear statistics, which will eventually yield the 2nd order Wiener chaos.

\section{Properties of the spectral projection kernel}
In this section, we first review some basic facts for the spectral projection kernel.  Then we will derive several integral lemmas which provide the key estimates   to prove the main results. 
\subsection{Preliminaries}
It's well-known that  the kernel for the spectral orthogonal projection $K_n: L^2(S^d) \to \mathcal H_n(S^d)$ satisfies  (Theorem 2.9 in \cite{AH})
\begin{equation}\label{knpn}
K_{n}(x,y)=  \frac{k_{n}}{s_d}P_{n}(\cos \d(x,y))=
\frac{k_{n}}{s_d}P_{n}(x\cdot y),
\end{equation}
where 
$\d(x,y) \in [0,\pi]$ is the geodesic distance which is the angle between the vectors $x, y\in S^d$,  $P_{n}$ is  the Legendre polynomial of degree $n$ in $d$ dimension,  $k_n$ is the dimension of $\mathcal H_n$ given in \eqref{knquan}
and $s_d=2\pi^{\frac{d+1}2}/\Gamma(\frac{d+1}2)$ is the surface area of $S^d$. Since both $x$ and $y$ are on the unit sphere, then we can rewrite $\cos \d(x,y)=x\cdot y$ as the inner product between $x$ and $y$.

We also write 
$$P_{n}(x,y):=P_{n}(\cos \d(x,y))=P_{n}(x\cdot y).$$ 
By the fact that $P_n(1)=1$ \cite{AH},   one has the identity
\beq \label{ondiagonalkernel}K_{n}(x,x)=  \frac{k_{n}}{s_d}.\eeq
The kernel $K_n(x,y)$ satisfies the reproducing property, 
\begin{equation}\label{reproduce1}
\int_{S^d} K_n(x_1,x_2)K_n(x_2,x_3)dx_2=K_n(x_1,x_3). 
\end{equation}
When $x_1=x_3$, \eqref{reproduce1} reads, 
\beq\label{sdsdsdsd}
\int_{S^d} K_n^2(x_1,x_2) dx_2=\frac{k_{n}}{s_d}, 
\eeq
and thus  we have
\begin{equation}\label{reproduce2}
\int_{(S^d)^2} K_n^2(x_1,x_2) dx_1dx_2=k_{n}.
\end{equation}
For $P_{n}$, two basic  properties are  \cite{AH}, 
\begin{equation}\label{pnpro1}
P_{n}(x)=(-1)^n P_{n}(-x)
\end{equation}
and 
\begin{equation}\label{pnprop2}
\abs{P_{n}(x)}\leq 1,\,\, \forall x \in [-1,1]. 
\end{equation}
By \eqref{ondiagonalkernel} and the reproducing property \eqref{reproduce1},  we obtain that
\begin{equation}\label{reproduce3}
\int_{S^d} P_{n}(x_1,x_2)P_{n}(x_2,x_3)dx_2=\left(\frac{k_{n}}{s_d}\right)^{-1}P_n(x_1,x_3),
\end{equation}
and by \eqref{sdsdsdsd}, we have 
\begin{equation}\label{reproduce4}
\int_{S^d} P_{n}^2(x_1,x_2)dx_2=\left(\frac{k_{n}}{s_d}\right)^{-1}. 
\end{equation}
For $0\leq \theta\leq \pi/2$, one has 
the Hilb's asymptotics  for the Legendre polynomials (by taking $\alpha=\beta=\frac{d-2}2$ in  \cite[Theorem 8.21.12]{So}),
\begin{equation}\label{hilbs}
\begin{split}
P_{n}(\cos \theta)=& \Gamma\left(\frac{d}{2}\right) \left(\frac{\theta}{\sin\theta}\right)^{\frac{1}{2}} \left(\frac{1}{2}(n+\frac{d-1}{2} ) \sin \theta \right)^{-\frac{d-2}{2}}
J_{\frac{d-2}{2}}\left((n+\frac{d-1}{2})\theta\right)\\&+R_{n}(\theta),
\end{split}
\end{equation}
where 
$J_{\frac{d-2}{2}}$ is the Bessel function of  order $\frac{d-2}{2}$.  	 And the error term satisfies the estimates:
\[R_n(\theta)=\begin{cases}
{\theta}^2 O(1)&\text{$0\leq \theta \leq {c}{n^{-1}} $ },\\
{\theta}^{\frac{3-d}{2}}O(n^{-\frac{1+d}{2}}) &
\text{${c}{n^{-1}}\leq \theta \leq {\pi}/{2} $ }, 
\end{cases}\] where $c$ is some constant independent of $n$. 

For the Bessel function, 
$J_{\frac{d-2}{2}}$  is bounded on the positive real line and has the  expansion (Formula (1.71.1) in \cite{So}),
\beq
J_{\frac{d-2}{2}}(x)=\sum_{j=0}^{\infty} \frac{(-1)^j}{j!\Gamma(j+\frac{d}{2})} \left(\frac{x}{2}\right)^{2j+\frac{d-2}{2}}.
\eeq
Furthermore, it admits the  asymptotic expansion (Formula (1.71.7) in \cite{So}),
\beq \label{j0}
J_{\frac{d-2}2}(x)= \sqrt{\frac{2}{\pi x}}\cos\left(x-(d-1)\frac{\pi}{4}\right) +o(x^{-1}) \quad \mbox{as}\,\, \, x\to +\infty.
\eeq
Now we define a function $p_n(\theta)$ for $\theta \in [0,\pi]$ as follows. 
For $0\leq \theta\leq \pi/2$, we define
\begin{equation}\label{pndef}
\begin{split}
p_{n}(\theta):=&\Gamma\Big(\frac{d}{2}\Big)\left(\frac{\theta}{\sin \theta}\right)^{1/2}\left(\frac{1}{2}(n+\frac{d-1}{2}) \sin \theta \right)^{-(d-2)/2} \\
& \times \sqrt{\frac{2}{\pi (n+(d-1)/2)\theta }} \cos\left((n+(d-1)/2)\theta-(d-1)\frac{\pi}{4} \right)\\
=& \Gamma\Big(\frac{d}{2}\Big)\Big(\frac{2^{d-1}}{\pi}\Big)^{1/2} 
\Big(n+(d-1)/2\Big)^{-(d-1)/2}\big(\sin \theta\big)^{-(d-1)/2}\\
& \times 
\cos\left((n+(d-1)/2)\theta-(d-1)\frac{\pi}{4} \right);
\end{split}
\end{equation}
for $\pi/2<\theta \leq \pi$, we define $$
p_n(\theta):=(-1)^n p_n(\pi-\theta).
$$
Combining \eqref{hilbs} and \eqref{j0}, for $0\leq \theta\leq \pi$, we have the estimates,
\begin{equation}\label{pnes}
\abs{P_{n}(\cos \theta)-p_{n}(\theta)} \leq C\left( \min\{n\theta, n(\pi-\theta)\}^{-d/2} \wedge 1\right), 
\end{equation}
\beq \label{control pn}
\abs{P_{n}(\cos \theta)}\leq C\left( \min\{n\theta, n(\pi-\theta)\}^{-(d-1)/2} \wedge 1\right), 
\eeq
and 
\beq \label{control pn2}
|p_n(\theta)|\leq C\left( \min\{n\theta, n(\pi-\theta)\}^{-(d-1)/2} \wedge 1\right).
\eeq

\subsection{Integral estimates}
Now we  will prove  several  lemmas involving the integrals of the kernel $K_n$.  They will be one of  the main technical ingredients in the proofs of our main results. 

We will use the spherical coordinate system $ (\theta, \phi_1, \ldots, \phi_{d-1})$ for $S^d$, where $\theta, \phi_1, \ldots, \phi_{d-2}$ range over [0, $\pi$] and $\phi_{d-1}$ ranges over [0,$2\pi$]. Here, $\theta$ is the  arc length from the point $(\theta, \phi)$ to the origin of the coordinate system. For simplicity,  we will use $\phi$  as a shorthand for $(\phi^1, \ldots, \phi^{d-1})$, and thus the range of $\phi$ is $\Omega:=[0,\pi]^{d-2}\times [0,2\pi]$.  Then the volume element for $S^d$ with respect to the standard round metric  is  $$dx=\hat{J}(\theta, \phi)d\theta d\phi,$$
where 
$$\hat{J}(\theta, \phi)=\sin^{d-1}(\theta)\sin^{d-2}(\phi_1)\cdots\sin(\phi_{d-2}).$$ We define $$J(\phi):= \sin^{d-2}(\phi_1)\cdots\sin(\phi_{d-2}),$$ and thus we can rewrite $$dx=\sin^{d-1}(\theta) J(\phi)d\theta d\phi. $$
The first lemma concerns the integration of a function against $K_n^2$.
\begin{lem}\label{sc}
For any  bounded function $f(x,y)$, we have
\beq\label{sum_fxy}
\begin{split}
&	\lim\limits_{n\to \infty} \frac{1}{k_{n}}
\int_{S^d}\int_{S^d} f(x,y)K_n^2(x,y)dxdy
\\
=&\frac{2^{d-1}}{\Gamma(d)\pi} \Big(\frac{\Gamma\big(\frac{d}{2}\big)}{s_d}\Big)^2 \int_{S^d} \int_0^{\pi} 
\int_{\Omega} f(x,x+(\theta, \phi))   J(\phi) 	d\phi d\theta dx.\\
=&\frac{2^{d-1}}{\Gamma(d)\pi} \Big(\frac{\Gamma\big(\frac{d}{2}\big)}{s_d}\Big)^2 \int_{S^d} \int_{S^d}  \frac{f(x,y)}{\sin^{d-1} (\arccos (x\cdot y)) } dxdy.
\end{split}
\eeq
\end{lem}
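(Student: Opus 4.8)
The plan is to first use \eqref{knpn} to write $K_n^2(x,y)=\frac{k_n^2}{s_d^2}P_n^2(x\cdot y)$, so that the quantity on the left equals $\frac{k_n}{s_d^2}\int_{S^d}\int_{S^d}f(x,y)P_n^2(x\cdot y)\,dxdy$. For each fixed $x$ the weight $P_n^2(x\cdot y)$ depends on $y$ only through $\theta=\arccos(x\cdot y)$, so by the rotational invariance of the round sphere I would place the pole of the spherical coordinate system at $x$; then $dy=\sin^{d-1}(\theta)J(\phi)\,d\theta\,d\phi$ and the inner integral becomes $\int_0^\pi\int_\Omega f(x,x+(\theta,\phi))P_n^2(\cos\theta)\sin^{d-1}(\theta)J(\phi)\,d\phi\,d\theta$. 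Introducing the bounded measurable function $F(\theta):=\int_{S^d}\int_\Omega f(x,x+(\theta,\phi))J(\phi)\,d\phi\,dx$ and applying Fubini, the whole expression collapses to the one-dimensional integral $\frac{k_n}{s_d^2}\int_0^\pi F(\theta)\,P_n^2(\cos\theta)\sin^{d-1}(\theta)\,d\theta$, and it remains to identify its limit.

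The next step is to replace $P_n^2$ by the oscillatory model $p_n^2$. On the bulk range $\theta\in[c/n,\pi-c/n]$, I would write $P_n^2-p_n^2=(P_n-p_n)(P_n+p_n)$ and combine the estimate \eqref{pnes} for $|P_n-p_n|$ with the bounds \eqref{control pn} and \eqref{control pn2} for $|P_n|$ and $|p_n|$, obtaining $|P_n^2-p_n^2|\le C\big(\min\{n\theta,n(\pi-\theta)\}^{-(2d-1)/2}\wedge 1\big)$; multiplying by $k_n\sin^{d-1}(\theta)$ and integrating against the bounded $F$, a direct estimate shows that this error is $o(1)$ (in fact $O(n^{-1/2})$), the weight $\sin^{d-1}\theta\sim\theta^{d-1}$ absorbing the singular behaviour. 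On the two $O(1/n)$–neighbourhoods of the poles $\theta=0,\pi$, where the model $p_n$ is no longer a good approximation, I would instead bound $P_n^2$ and $p_n^2$ separately, using $|P_n|\le 1$ from \eqref{pnprop2} for the former and the fact (seen below) that $p_n^2\sin^{d-1}\theta$ is bounded for the latter; each contributes $O(1/n)$. Hence it suffices to evaluate $\frac{k_n}{s_d^2}\int_0^\pi F(\theta)\,p_n^2(\theta)\sin^{d-1}(\theta)\,d\theta$.

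The crucial simplification comes from \eqref{pndef}: on $[0,\pi/2]$ one has the exact identity
\[
p_n^2(\theta)\sin^{d-1}(\theta)=\Gamma\Big(\frac{d}{2}\Big)^2\frac{2^{d-1}}{\pi}\Big(n+\tfrac{d-1}{2}\Big)^{-(d-1)}\cos^2\!\Big((n+\tfrac{d-1}{2})\theta-(d-1)\tfrac{\pi}{4}\Big),
\]
in which the singular factor $\sin^{-(d-1)}$ cancels and only a bounded oscillation times a slowly varying prefactor survives; on $[\pi/2,\pi]$ the same form holds after the reflection $\theta\mapsto\pi-\theta$, legitimate since $P_n^2(\cos\theta)=P_n^2(\cos(\pi-\theta))$ by \eqref{pnpro1} and $p_n(\theta)=(-1)^n p_n(\pi-\theta)$. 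Writing $\cos^2 u=\tfrac12+\tfrac12\cos 2u$, the constant halves combine with $k_n\sim 2n^{d-1}/\Gamma(d)$ from \eqref{asykn} to give the limit $\frac{1}{s_d^2}\cdot\frac{2^{d-1}\Gamma(d/2)^2}{\Gamma(d)\pi}\int_0^\pi F(\theta)\,d\theta$, while the oscillatory halves $\int_0^\pi F(\theta)\cos\big(2(n+\tfrac{d-1}{2})\theta-\cdots\big)\,d\theta$ tend to $0$ by the Riemann–Lebesgue lemma, which applies because $F\in L^1[0,\pi]$. Unfolding the definition of $F$ by Fubini yields the first claimed identity. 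The second identity is then just the change of variables back to $y$: since $dy=\sin^{d-1}(\theta)J(\phi)\,d\theta\,d\phi$ and $\theta=\arccos(x\cdot y)$, we have $\int_0^\pi\int_\Omega f(x,x+(\theta,\phi))J(\phi)\,d\phi\,d\theta=\int_{S^d}\frac{f(x,y)}{\sin^{d-1}(\arccos(x\cdot y))}\,dy$.

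I expect the main obstacle to be the treatment near the poles $\theta=0,\pi$, where the Hilb-type asymptotics degenerate and the weight $\sin^{-(d-1)}$ blows up: the bounds \eqref{pnes}, \eqref{control pn2} become ineffective there, so one must separate off these $O(1/n)$ regions and control $P_n^2$ and $p_n^2$ by hand. The decisive structural point that makes everything work is that in the product $p_n^2(\theta)\sin^{d-1}(\theta)$ the singularity cancels to leave a uniformly bounded oscillation; this is exactly what renders the Riemann–Lebesgue lemma applicable and what lets the averaging $\cos^2\mapsto\tfrac12$ produce the stated constant, while the off-diagonal decay bounds guarantee that the passage from $P_n^2$ to $p_n^2$ costs nothing in the limit.
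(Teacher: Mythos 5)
Your proposal is correct; every estimate checks out (the bulk error $k_n\int_{c/n}^{\pi-c/n}\abs{P_n^2-p_n^2}\sin^{d-1}\theta\,d\theta=O(n^{-1/2})$, the $O(1/n)$ pole contributions, and the constants, which match \eqref{sum_fxy} exactly). It rests on the same two pillars as the paper's proof --- Hilb's asymptotics and the Riemann--Lebesgue averaging of $\cos^2$ to $\tfrac12$ --- but the technical route is genuinely different in two respects. First, you collapse the double integral to a one-dimensional integral against $F(\theta)$ by Fubini and apply Riemann--Lebesgue once, whereas the paper keeps all variables, applies Riemann--Lebesgue pointwise in $(x,\phi)$, and then invokes bounded convergence. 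Second, and more substantively, you pass from $P_n^2$ to the cosine model $p_n^2$ using the packaged uniform bounds \eqref{pnes}, \eqref{control pn}, \eqref{control pn2}, which gives a quantitative $O(n^{-1/2})$ replacement error; the paper instead works directly from Hilb's formula \eqref{hilbs}, keeps the Bessel function $J_{(d-2)/2}$ in play, and runs an $\ep$-sandwich (the chain $I_3$ through $I_8$, using \eqref{compare j0} with the $M/n$ cutoff) before letting $\ep\to 0$. Your version buys a convergence rate and avoids the $\ep$-bookkeeping, essentially because the estimates \eqref{pnes}--\eqref{control pn2} already encapsulate the Bessel analysis; the paper's version is self-contained at the level of \eqref{hilbs} and \eqref{j0} and is the same computation it reuses (via \eqref{rmk:lem2}) later. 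Both exploit the identical structural cancellation you highlight: $p_n^2(\theta)\sin^{d-1}(\theta)$ is a bounded oscillation times $n^{-(d-1)}$, which is what makes the averaging argument work.
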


The next two lemmas give upper bounds  on the integration of the product of several $K_n's$. 
\begin{lem}\label{productcontrol}
For any $r\in \N, r\geq 2$, 
\begin{equation}
\int_{(S^d)^r} \Big|  \prod_{i=1}^r K_n(x_i,x_{i+1}) \Big|dx_1\cdots dx_r=
O(n^{\frac{(d-1)r}{2}}),
\end{equation}
where $x_{r+1}$ is set to be $x_1$.
Equivalently, 
\begin{equation}
\int_{(S^d)^r} \Big|  \prod_{i=1}^r P_n(x_i,x_{i+1}) \Big|dx_1\cdots dx_r=
O(n^{-\frac{(d-1)r}{2}}).
\end{equation}
\end{lem}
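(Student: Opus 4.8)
The plan is to prove the equivalent statement for $P_n$ and then multiply back by $(k_n/s_d)^r$: since $K_n(x,y)=(k_n/s_d)P_n(x,y)$ by \eqref{knpn} and $k_n\sim 2n^{d-1}/\Gamma(d)$ by \eqref{asykn}, we have $(k_n/s_d)^r=\Theta(n^{(d-1)r})$, so the bound $O(n^{-(d-1)r/2})$ for the $P_n$-integral is exactly equivalent to the claimed $O(n^{(d-1)r/2})$ for the $K_n$-integral. The main observation is that, writing $g(x,y):=\abs{P_n(x\cdot y)}$, the cyclic integral $\int_{(S^d)^r}\prod_{i=1}^r\abs{P_n(x_i,x_{i+1})}\,dx$ is exactly $\mathrm{Tr}(T^r)$, where $T$ is the integral operator on $L^2(S^d)$ with kernel $g$. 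Since $g$ is real, symmetric and nonnegative, $T$ is a self-adjoint Hilbert--Schmidt (hence compact) operator, and I would invoke the spectral theorem to write $\mathrm{Tr}(T^r)=\sum_j\lambda_j^r$ with real eigenvalues $\lambda_j$ satisfying $\abs{\lambda_j}\le\norm{T}_{\mathrm{op}}$ and $\sum_j\lambda_j^2=\norm{T}_{HS}^2$.

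This yields the key inequality
\[
\int_{(S^d)^r}\prod_{i=1}^r\abs{P_n(x_i,x_{i+1})}\,dx=\mathrm{Tr}(T^r)\le\sum_j\abs{\lambda_j}^r\le\norm{T}_{\mathrm{op}}^{\,r-2}\norm{T}_{HS}^2,
\]
valid for every $r\ge2$; equivalently one may avoid enumerating eigenvalues and use the trace inequality $\abs{\mathrm{Tr}(T^{r-2}T^2)}\le\norm{T}_{\mathrm{op}}^{\,r-2}\mathrm{Tr}(T^2)$, since $T^2\ge0$. It then remains to estimate the two factors. For the Hilbert--Schmidt norm, the reproducing identity \eqref{reproduce4} gives $\int_{S^d}P_n^2(x,y)\,dy=(k_n/s_d)^{-1}$, so $\norm{T}_{HS}^2=\int_{(S^d)^2}P_n^2\,dxdy=s_d^2/k_n=O(n^{-(d-1)})$.

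For the operator norm I would apply Schur's test for the nonnegative symmetric kernel $g$, which gives $\norm{T}_{\mathrm{op}}\le\sup_x\int_{S^d}\abs{P_n(x\cdot y)}\,dy$. Reducing this to a one-dimensional integral in the polar angle $\theta$ and using the symmetry $\abs{P_n(\cos(\pi-\theta))}=\abs{P_n(\cos\theta)}$ from \eqref{pnpro1}, it suffices to bound $\int_0^{\pi/2}\abs{P_n(\cos\theta)}\sin^{d-1}\theta\,d\theta$. Splitting at $\theta=1/n$ and inserting the off-diagonal decay \eqref{control pn} (with $\sin\theta\le\theta$), the region $[0,1/n]$ contributes $O(n^{-d})$, while on $[1/n,\pi/2]$ the integrand is bounded by $n^{-(d-1)/2}\theta^{(d-1)/2}$, whose integral is $O(n^{-(d-1)/2})$; hence $\norm{T}_{\mathrm{op}}=O(n^{-(d-1)/2})$.

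Combining the two estimates gives $\mathrm{Tr}(T^r)=O\big(n^{-(d-1)(r-2)/2}\big)\cdot O\big(n^{-(d-1)}\big)=O\big(n^{-(d-1)r/2}\big)$, which proves the $P_n$-statement and hence the lemma after multiplying by $(k_n/s_d)^r$. I expect the operator-norm estimate — controlling $\sup_x\int_{S^d}\abs{P_n}\,dy$ uniformly in $n$ — to be the main technical point, since it is precisely here that the all-scale decay \eqref{control pn}, together with the careful treatment of both the near-diagonal region $\theta\approx0$ and, via symmetry, the antipodal region $\theta\approx\pi$, is essential; by comparison, the Hilbert--Schmidt computation and the trace manipulation are routine.
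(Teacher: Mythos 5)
Your proof is correct, and it takes a genuinely different route from the paper's. The paper argues pointwise: starting from the all-scale decay \eqref{control pn}, it proves the folded triangle inequality $\beta_{1,3}\le\beta_{1,2}+\beta_{2,3}$, where $\beta_{i,j}:=\min\{\d(x_i,x_j),\pi-\d(x_i,x_j)\}$, by an antipodal-reflection case analysis, deduces from it the one-step bound \eqref{cancelpn}, namely $\int_{S^d}\abs{P_n(x_1,x_2)P_n(x_2,x_3)}dx_2\le Cn^{-(d-1)}\beta_{1,3}^{-(d-1)/2}$, and applies that bound $r-2$ times to integrate out the intermediate variables of the cycle. You instead identify the cyclic integral with $\mathrm{Tr}(T^r)$ for the self-adjoint Hilbert--Schmidt operator $T$ with kernel $\abs{P_n(x\cdot y)}$ --- legitimate, since $T^r$ is trace class for $r\ge 2$ and its trace is computed by the kernel convolution formula (everything is nonnegative, so Tonelli applies) --- and then combine the trace inequality $\mathrm{Tr}(T^r)\le \norm{T}_{\mathrm{op}}^{r-2}\norm{T}_{HS}^2$ with the exact identity $\norm{T}_{HS}^2=s_d^2/k_n$ from \eqref{reproduce4} and with Schur's test plus \eqref{control pn} and \eqref{pnpro1} to get $\norm{T}_{\mathrm{op}}=O(n^{-(d-1)/2})$; your splitting of the Schur integral at $\theta=1/n$ is sound, the exponents combine exactly as required, and the transfer between the $P_n$- and $K_n$-statements via $(k_n/s_d)^r=\Theta(n^{(d-1)r})$ is right. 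Your route is shorter, uses the kernel decay only once (inside the Schur bound) rather than in every factor, and cleanly isolates the two invariants that govern the problem. What the paper's chaining argument buys in exchange is a pointwise intermediate estimate: the proof of \eqref{cancelpn} is recycled (in slightly modified form) to establish the finiteness \eqref{bdim0} needed in Lemma \ref{keylemma}, where the integrand carries mixed exponents ($-d/2$ on one factor, $-(d-1)/2$ on the others) and an extra bounded function $h$, a situation to which a trace identity for a single symmetric kernel does not directly apply. As a self-contained proof of Lemma \ref{productcontrol}, yours is complete.
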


\begin{lem}\label{keylemma}
For any $r\in \N, r\geq 3$ and bounded measurable function $h$ of $r$ variables, 
\begin{equation}
\int_{(S^d)^r} h(x_1,\ldots, x_r)\prod_{i=1}^r P_n(x_i,x_{i+1})dx_1\cdots dx_r=o(n^{-\frac{(d-1)r}{2}}),
\end{equation}
where $x_{r+1}$ is set to be $x_1$.  
\end{lem}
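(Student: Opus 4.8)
The plan is to isolate the oscillatory structure of the Legendre factors and exploit it: once a bounded weight $h$ is inserted and $r\ge 3$, the cancellation in the \emph{signed} product $\prod_i P_n$ beats the absolute-value bound of Lemma \ref{productcontrol}. Write $\theta_i:=\arccos(x_i\cdot x_{i+1})=\d(x_i,x_{i+1})$ cyclically (with $x_{r+1}=x_1$), so the cyclic product is built from the $r$ edge-distances $\theta_1,\dots,\theta_r$. Fix a small $\eta>0$ and split $(S^d)^r=A_\eta\cup B_\eta$, where on $A_\eta$ every $\theta_i\in[\eta,\pi-\eta]$ and $B_\eta$ is the complement. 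On $B_\eta$ I would rerun the integration scheme from the proof of Lemma \ref{productcontrol} (one Cauchy--Schwarz step on a ``join'' variable via \eqref{reproduce4}, then a chain of single-variable integrations of $\int_{S^d}|P_n(x,\cdot)|$), but restricted to the event that some $\theta_i$ lies within $\eta$ of $\{0,\pi\}$. Using \eqref{control pn}, the portions of $\int_0^\eta|P_n(\cos\theta)|^2\sin^{d-1}\theta\,d\theta$ and $\int_0^\eta|P_n(\cos\theta)|\sin^{d-1}\theta\,d\theta$ (and their $[\pi-\eta,\pi]$ analogues) are only a fraction $\varepsilon(\eta)+o(1)$ of the full integrals $\sim n^{-(d-1)}$ and $\sim n^{-(d-1)/2}$, with $\varepsilon(\eta)\to0$ as $\eta\to0$. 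Hence $\int_{B_\eta}\prod_i|P_n(x_i,x_{i+1})|\,d\x\le(\varepsilon(\eta)+o(1))\,n^{-(d-1)r/2}$, so $\big|\int_{B_\eta}h\prod_iP_n\big|\le\|h\|_\infty(\varepsilon(\eta)+o(1))\,n^{-(d-1)r/2}$.

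On $A_\eta$ the amplitudes are bounded and I would replace each $P_n$ by its Hilb approximation $p_n$ from \eqref{pndef}: by \eqref{pnes} one has $\sup_{A_\eta}|P_n-p_n|=O_\eta(n^{-d/2})$, while by \eqref{control pn2} $\sup_{A_\eta}|p_n|=O_\eta(n^{-(d-1)/2})$, so each term of the expansion of $\prod_iP_n$ carrying at least one factor $P_n-p_n$ contributes a crude product-of-sups bound of order $n^{-d/2}\cdot n^{-(d-1)(r-1)/2}=n^{-(d-1)r/2-1/2}=o(n^{-(d-1)r/2})$. It remains to treat $\int_{A_\eta}h\prod_ip_n$. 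By \eqref{pndef}, $p_n(\theta_i)$ equals $n^{-(d-1)/2}$ times a factor bounded above and below on $[\eta,\pi-\eta]$ times $\cos(\omega_n\theta_i-(d-1)\tfrac{\pi}{4})$ with $\omega_n=n+\tfrac{d-1}{2}$ (and an analogous oscillatory form, with a harmless sign $(-1)^n$, for $\theta_i>\pi/2$). Factoring out the common order $n^{-(d-1)r/2}$ and expanding the product of cosines by product-to-sum formulas, $\int_{A_\eta}h\prod_ip_n$ becomes $n^{-(d-1)r/2}$ times a finite sum, over sign patterns $\epsilon\in\{\pm1\}^r$, of oscillatory integrals $\int_{A_\eta}g_\epsilon(x)\cos(\omega_n\Phi_\epsilon(x)+c_\epsilon)\,d\x$, where $\Phi_\epsilon=\sum_i\epsilon_i\theta_i$ and $g_\epsilon$ is bounded on $A_\eta$.

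The heart of the matter is that every such oscillatory integral tends to $0$ as $n\to\infty$. This is exactly where $r\ge3$ enters: for $r\ge3$ the $r$ edges $\{x_i,x_{i+1}\}$ of the cycle are pairwise distinct, so the distances $\theta_1,\dots,\theta_r$ are functionally independent off a set of measure zero, and hence for every $\epsilon$ the phase $\Phi_\epsilon$ is non-constant with critical set $\{\nabla\Phi_\epsilon=0\}$ of measure zero. Concretely, $\nabla_{x_j}\Phi_\epsilon$ vanishes only when the tangent directions from $x_j$ towards $x_{j-1}$ and towards $x_{j+1}$ are (anti)parallel, which at all $j$ simultaneously forces the points onto a common great circle. (By contrast, for $r=2$ the two edges coincide, $\theta_1\equiv\theta_2$, so $\epsilon=(+,-)$ yields a genuinely non-oscillating term; this is the source of the leading order captured by Lemma \ref{sc}, and is precisely why $r\ge3$ is needed.) Given the measure-zero critical set and the bounded (hence $L^1$, since $A_\eta$ has finite measure) amplitude $g_\epsilon$, a Riemann--Lebesgue/non-stationary-phase argument—approximate $g_\epsilon$ in $L^1$ by smooth functions supported away from $\{\nabla\Phi_\epsilon=0\}$ and integrate by parts—gives $\int_{A_\eta}g_\epsilon\cos(\omega_n\Phi_\epsilon+c_\epsilon)\to0$.

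Combining the two regions, $\limsup_{n\to\infty} n^{(d-1)r/2}\big|\int_{(S^d)^r}h\prod_iP_n\,d\x\big|\le\|h\|_\infty\,\varepsilon(\eta)$ for every $\eta>0$; letting $\eta\to0$ yields the claimed $o(n^{-(d-1)r/2})$. The main obstacle I anticipate is the careful bookkeeping on $B_\eta$—showing that the small/large-angle region captures only a vanishing fraction $\varepsilon(\eta)$ of the Lemma \ref{productcontrol} bound \emph{uniformly in} $n$, including at the Cauchy--Schwarz join step—together with a clean verification that each $\Phi_\epsilon$ has critical set of measure zero, which is what legitimizes the Riemann--Lebesgue step.
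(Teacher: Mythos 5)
Your proposal is sound and would yield the lemma, but it implements the two key steps in a genuinely different way from the paper, so a comparison is worthwhile. The shared skeleton is identical: replace each $P_n$ by the Hilb approximation $p_n$ of \eqref{pndef}, expand the product of cosines by product-to-sum, and kill the resulting oscillatory integrals by a Riemann--Lebesgue-type argument, with $r\geq 3$ entering exactly as you say (for $r=2$ the difference phase $\beta_{1,2}-\beta_{2,1}\equiv 0$ is constant and survives, which is what Lemma \ref{sc} captures). Where you diverge: (i) you control the Hilb error and the near-diagonal singularities by an $A_\eta/B_\eta$ splitting with sup bounds on $A_\eta$, whereas the paper never splits---it keeps the integrable majorant $\beta_{j,j+1}^{-d/2}\prod_{i\neq j}\beta_{i,i+1}^{-(d-1)/2}$ (integrability being a slight extension of Lemma \ref{productcontrol}) and bounds the error in integral form, cf.\ \eqref{bdim2}--\eqref{proder}; (ii) for the oscillation step you expand all $r$ cosines into $2^{r-1}$ phases $\Phi_\epsilon=\sum_i\epsilon_i\beta_i$ and run a multidimensional non-stationary-phase argument, which requires checking that the critical set is contained in the set of configurations lying on a common great circle (measure zero precisely when $r\geq 3$, $d\geq 2$) and then approximating the amplitude in $L^1$ by smooth functions supported off that set. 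The paper avoids all stationary-phase machinery: it fixes $\phi,x_2,\ldots,x_r$, integrates in the single variable $\theta=\alpha_{1,2}$, and observes that only the two factors for the edges $(1,2)$ and $(r,1)$ involve $x_1$; after product-to-sum, the two phases $\beta_{1,2}\pm\beta_{r,1}$ have nonzero piecewise derivative in $\theta$ because of the explicit computation $\abs{d\alpha_{r,1}/d\theta}<1$, so the classical one-variable Riemann--Lebesgue lemma applies, followed by dominated convergence in the remaining variables (this is \eqref{2cos}--\eqref{2cos2}). Your route is more symmetric and would generalize to other kernels and phases; the paper's is more elementary and sidesteps the critical-set geometry entirely.

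Two repairable inaccuracies in your write-up. First, the $B_\eta$ bookkeeping that you flag as the main obstacle is actually immediate and needs no Cauchy--Schwarz join step (which in any case is not how Lemma \ref{productcontrol} proceeds): by \eqref{control pn}, $\int_{B_\eta}\prod_i\abs{P_n(x_i,x_{i+1})}\,d\x\leq Cn^{-(d-1)r/2}\int_{B_\eta}\prod_i\beta_{i,i+1}^{-(d-1)/2}\,d\x$, and the last integral is an $n$-independent quantity tending to $0$ as $\eta\to 0$ by absolute continuity of the integral of the fixed integrable function $\prod_i\beta_{i,i+1}^{-(d-1)/2}$. Second, the phases must be written in terms of $\beta_i=\min\{\theta_i,\pi-\theta_i\}$ rather than $\theta_i$, since $p_n(\theta)=(-1)^np_n(\pi-\theta)$ for $\theta>\pi/2$; equivalently, decompose $A_\eta$ further according to which hemisphere each $\theta_i$ lies in and carry the harmless signs $(-1)^{n\1[\theta_i>\pi/2]}$. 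Both points are cosmetic and your argument is complete once they are inserted.
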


We now give the  proofs of Lemmas \ref{sc}-\ref{keylemma}. 
\begin{proof}[Proof of Lemma \ref{sc}] 
By the boundedness of $f$, without loss of generality, we may assume that $f$ is nonnegative. For $x,y\in S^d$,  we build a spherical coordinate system $(\theta,\phi)$ with $x$ being the north pole and write $y$ as $x+(\theta,\phi)$. 
By the facts  that $K_n(x,y)=k_{n}P_n(\cos \theta)/s_d$ and
$\abs{P_{n}\cos(\theta)}=\abs{P_{n}\cos (\pi-\theta)}$, we have 
\begin{equation}\label{Varlfn}   
\begin{split}
	&\int_{S^d}\int_{S^d} f(x,y)K_n^2(x,y)dxdy \\
	=&\Big( \frac{k_{n}}{s_d} \Big)^2\int_{S^d}\int_{0}^{\pi}\int_{\Omega} f(x,x+(\theta,\phi)) P_n(\cos \theta)^2  \hat{J}(\theta,\phi)   d\phi d\theta dx\\
	=&\Big (\frac{k_{n}}{s_d}\Big)^2 \left(\int_{S^d}\int_0^{\frac{\pi}{2}}\int_{\Omega} f(x,x+(\theta,\phi)) P_n(\cos \theta)^2 \hat{J}(\theta,\phi)   d\phi d\theta dx \right.\\
	&\quad\quad\quad\left.+  \int_{S^d}  \int_0^{\frac{\pi}{2}} \int_{\Omega} f(x,x+(\pi-\theta,\phi))  P_n(\cos \theta)^2 \hat{J}(\theta,\phi)   d\phi d\theta dx \right)\\
	:=& \Big (\frac{k_{n}}{s_d}\Big)^2 (I_1+I_2).
\end{split}
\end{equation}

We will analyze   $I_1$ and $I_2$ by a series of approximations.   We only give details for $I_1$, and $I_2$ follows from the same arguments. By Hilb's asymptotic \eqref{hilbs}, one has 
\beq \label{cov}
\begin{split}
P_n(\cos \theta)^2=&\Gamma\Big(\frac{d}{2}\Big)^2 \left(  \frac{1}{2}(n+\frac{d-1}{2}) \sin \theta \right)^{-(d-2)}\Big(\frac{\theta}{\sin \theta}\Big)J_{\frac{d-2}{2}}\left((n+\frac{d-1}{2})\theta \right)^2\\&+\hat{R}_n(\theta), \end{split}
\eeq
where
\[\hat{R}_n(\theta)=\begin{cases}
{\theta}^2 O(1)&\text{$0\leq \theta \leq {c}/{n} $ },\\
{\theta}^{2-d} O(n^{-d}) &
\text{${c}/{n}\leq \theta \leq {\pi}/{2} $ }.
\end{cases}\]
We now define
\begin{equation}\label{i1-i3}
I_3=	    	\int_{S^d}   \int_{0}^{\frac{\pi}{2}} \int_{\Omega}   \theta J_{\frac{d-2}{2}}\Big((n+\frac{d-1}{2})\theta \Big)^2 f(x,x+(\theta,\phi))   J(\phi) d\phi d\theta dx.
\end{equation}
By \eqref{Varlfn}  and \eqref{cov} there exists $C>0$ such that 
\beq \label{remainder}
\abs{I_1- \Gamma\Big(\frac{d}{2}\Big)^2 \left(  \frac{1}{2}(n+\frac{d-1}{2})  \right)^{-(d-2)} I_3}\leq Cn^{-d}.
\eeq
By   \eqref{j0}, for any $\ep\in(0, 1)$, there exists an $M>0$ large enough such that for $x>M$, we have
\begin{equation} \label{compare j0}
\begin{split}
	&(1-\ep) \frac{2}{\pi x} \cos^2 \Big(x-(d-1)\frac{\pi}{4}\Big) -x^{-\frac{3}{2}} \\
	\leq  &J_{\frac{d-2}{2}}(x)^2 \\
	\leq  &(1+\ep) \frac{2}{\pi x} \cos^2 \Big(x-(d-1)\frac{\pi}{4}\Big)+x^{-\frac{3}{2}}.
\end{split}
\end{equation}
Now we split $I_3$ into two terms, 
\begin{equation}\label{defi3}
\begin{split}
	I_3=&\int_{S^d} \int_{0}^{{M}/{n}} \int_{\Omega}  J_{\frac{d-2}{2}}\left((n+\frac{d-1}{2})\theta \right)^2 
	f(x,x+(\theta,\phi))  \theta J(\phi
	)d\phi d\theta dx \\
	&+\int_{S^d}   \int_{{M}/{n}}^{\frac{\pi}{2}} \int_{\Omega} J_{\frac{d-2}{2}}\left((n+\frac{d-1}{2})\theta \right)^2 f(x,x+(\theta,\phi))  \theta J(\phi)d\phi d\theta dx \\
	:=&I_4+I_5.
\end{split}
\end{equation}
For $I_4$, by the boundedness of $f$ and $J_{\frac{d-2}{2}}$,  there exists some $C>0$ such that 
\beq \label{i4}
\abs{I_4}\leq  CM^2/{n^2}. 
\eeq
For $I_5$, it holds trivially that $(n+\frac{d-1}{2})\theta>M$ for $\theta >{M}/{n}$. Hence, we can apply the estimates \eqref{compare j0}  for  $J_{\frac{d-2}{2}}((n+\frac{d-1}{2})\theta)$. We set
\begin{equation}\label{defi6}
\begin{split}
	I_6=&\int_{S^d}  \int_{{M}/{n}}^{\frac{\pi}{2}} \int_{\Omega} \frac{2}{\pi (n+(d-1)/2)\theta}
	f(x,x+(\theta,\phi))\\
	&	        	\times \cos^2\Big((n+\frac{d-1}{2})\theta -(d-1)\frac{\pi}{4} \Big) \theta  J(\phi)d\phi d\theta dx.
\end{split}
\end{equation}
Combining \eqref{compare j0}, \eqref{defi6} and the following estimate 
$$
\int_{S^d}   \int_{{M}/{n}}^{\frac{\pi}{2}} \int_{\Omega} \left((n+\frac{d-1}{2}\theta)\right)^{-3/2}
f(x,x+(\theta,\phi))  \theta J(\phi) d\phi d\theta dx
\leq  C n^{-3/2},
$$ we have that
\beq \label{i5 and i6}
(1-\ep)I_6 -Cn^{-\frac{3}{2}}   \leq I_5 \leq (1+\ep) I_6+Cn^{-\frac{3}{2}}.
\eeq
By Riemann-Lebesgue lemma, for any fixed $x$ and $\phi$, one has 
\beq
\begin{split}
&\lim\limits_{n\to\infty}	\int_{M/n}^{\frac{\pi}{2}}  f(x,x+(\theta,\phi)) \cos^2\Big((n+\frac{d-1}{2})\theta -(d-1)\frac{\pi}{4} \Big) d\theta \\
=& \lim\limits_{n\to\infty} \int_{M/n}^{\frac{\pi}{2}}  f(x,x+(\theta,\phi))  \Big(\frac{1}{2}+\frac{\cos((2n+d-1)\theta  -(d-1)\frac{\pi}{2}) }{2}\Big) d\theta \\
=&\frac{1}{2} \int_{0}^{\frac{\pi}{2}}  f(x,x+(\theta,\phi))  d\theta. 
\end{split}
\eeq
Therefore,  the bounded convergence theorem implies that
\begin{equation}
\begin{split}
	&	\lim\limits_{n\to\infty}	\int_{S^d}\int_{\Omega} \int_{M/n}^{\frac{\pi}{2}}  f(x,x+(\theta,\phi)) \cos^2\Big((n+\frac{d-1}{2})\theta -\frac{\pi}{4}\Big ) J(\phi) d\theta d\phi dx\\
	=&  \frac{1}{2}
	\int_{S^d}\int_{\Omega} \int_{0}^{\frac{\pi}{2}}  f(x,x+(\theta,\phi))   J(\phi)d\theta d\phi dx. 
\end{split}
\end{equation} This implies that 
\beq\label{claim45}\lim\limits_{n\to \infty} n{I_6}=\frac 1\pi\int_{S^d}\int_{\Omega} \int_{0}^{\frac{\pi}{2}}  f(x,x+(\theta,\phi))   J(\phi)d\theta d\phi dx:=I_7.\end{equation} 
Now, combining \eqref{defi3}, \eqref{i4} and \eqref{i5 and i6} and \eqref{claim45}, we have 	\begin{equation}\label{i3/i7}
(1-\ep)I_7\leq \liminf_{n\to \infty} {nI_3}  \leq \limsup_{n\to \infty}  {nI_3}{}  \leq (1+\ep)I_7.
\end{equation}
Since \eqref{i3/i7} holds for all $\ep\in(0,1)$ while $I_3$ and $I_7$ don't depend on $\ep$, by sending $\ep\to 0$,   we have
\begin{equation}\label{limi3i7}
\lim\limits_{n\to\infty} n{I_3}={I_7}.
\end{equation}
Combining 
\eqref{remainder} and \eqref{limi3i7},  we have
$$
\lim_{n\to \infty} {n^{d-1}I_1}=\Gamma\Big(\frac{d}{2}\Big)^2 2^{d-2}{I_7}.
$$
By the same argument with $\theta$ replaced by $\pi-\theta$, we get a similar limit
$$
\lim_{n\to \infty}{n^{d-1}I_2}=\Gamma\Big(\frac{d}{2}\Big)^2 2^{d-2}{I_8},
$$
where $I_8$ is defined similarly to $I_7$ as 
$$I_8=  \frac{1}{\pi } \int_{S^d}  \int_{\Omega}  \int_{\frac{\pi}{2}}^{\pi} f(x,x+(\theta,\phi)) J(\phi)d\theta d\phi  dx. $$
By the fact $k_{n}\sim 2n^{d-1}/\Gamma(d)$,  we get  	\begin{equation}
\begin{split}
&\lim_{n\to\infty}\frac{1}{k_{n}}
\int_{S^d}\int_{S^d} f(x,y)K_n^2(x,y)dxdy\\
=&\lim_{n\to\infty}\left(\frac{k_{n}}{s_d^2}\right)(I_1+I_2)\\
= & \lim_{n\to\infty}\frac{2n^{d-1}}{\Gamma(d)} \frac{1}{s_d^2}\Gamma\Big(\frac{d}{2}\Big)^2 2^{d-2}n^{-(d-1)}(I_7+I_8)\\
= & \frac{\Gamma\Big(\frac{d}{2}\Big)^2 2^{d-1}}{\pi \Gamma(d) s_d^2}  \int_{S^d}  \int_0^{\pi} \int_{\Omega}  f(x,x+(\theta,\phi)) J(\phi)d\phi d\theta  dx .
\end{split}
\end{equation}
This completes the proof of Lemma \ref{sc}. 

\end{proof}
As a remark, the proof of Lemma \ref{sc} actually shows that for almost all $x$, we have
\beq\label{rmk:lem2}
\begin{split}
&	\lim\limits_{n\to \infty} \frac{1}{k_{n}}
\int_{S^d} f(x,y)K_n^2(x,y)dy
\\=&\frac{2^{d-1}}{\Gamma(d)\pi} \Big(\frac{\Gamma\big(\frac{d}{2}\big)}{s_d}\Big)^2 \int_{S^d}  \frac{f(x,y)}{\sin^{d-1} (\arccos (x\cdot y)) } dy.
\end{split}
\eeq

\begin{proof}[Proof of Lemma \ref{productcontrol}]
To prove Lemma \ref{productcontrol}, we recall \eqref{control pn} where we have   
\begin{equation}\label{1pn}
P_n(x,y)\leq Cn^{-\frac{d-1}2}(\min\{ \d(x,y), \pi-\d(x,y)\})^{-\frac{d-1}{2}}.
\end{equation}
For $x_1,\ldots ,x_d\in S^d$,  let $\alpha_{i,j}=\d(x_i,x_j)$ be the geodesic distance which is the angle between $x_i$ and $x_j$ and let $\beta_{i,j}=\min\{\alpha_{i,j},\pi-\alpha_{i,j}\}$. We now claim 
\begin{equation}\label{betaineq}
\beta_{1,3}\leq \beta_{1,2}+\beta_{2,3}.
\end{equation}
To prove \eqref{betaineq}, we consider four possible cases. 
\begin{itemize}
\item If $\alpha_{1,2} <\pi/2$ and $\alpha_{2,3}\leq \pi/2$, then we have
$$
\beta_{1,2}+\beta_{2,3}=\alpha_{1,2} +\alpha_{2,3}\geq \alpha_{1,3} \geq \beta_{1,3}.
$$
Here, the first inequality follows from triangle inequality.
\item If $\alpha_{1,2} <\pi/2$ and $\alpha_{2,3}\geq \pi/2$, then by symmetry of the sphere, if we set $x_3':=-x_3$ (the reflection of $x_3$ through the origin of $\mathbb R^{d+1}$), we have
$$
\beta_{1,2}+\beta_{2,3}=\alpha_{1,2} +\pi-\alpha_{2,3}=\d(x_1,x_2)+\d(x_2,x_3')\geq 
\d(x_1,x_3')\geq \beta_{1,3}.
$$
\item The case  $\alpha_{1,2} \geq \pi/2$ and $\alpha_{2,3}< \pi/2$ can be analyzed 
similarly to the  second case. 
\item If $\alpha_{1,2} \geq \pi/2$ and $\alpha_{2,3}\geq \pi/2$, then by setting 
$x_2':=-x_2$, we have
$$
\beta_{1,2}+\beta_{2,3}=\d(x_1,x_2')+\d(x'_2,x_3)\geq 
\d(x_1,x_3)\geq \beta_{1,3}.
$$
\end{itemize}
The inequality \eqref{betaineq} implies that
\begin{equation*}
\beta_{1,2}\beta_{2,3}=\max\{\beta_{1,2},\beta_{2,3}\}\min\{\beta_{1,2},\beta_{2,3}\}
\geq \frac{\beta_{1,3}}{2}\min\{\beta_{1,2},\beta_{2,3}\},
\end{equation*}
which gives
\begin{equation}\label{betafinal}
\begin{split}
(\beta_{1,2}\beta_{2,3})^{-(d-1)/2}&\leq C \beta_{1,3}^{-(d-1)/2}
\min\{\beta_{1,2},\beta_{2,3}\}^{-(d-1)/2}\\
&\leq 
C\beta_{1,3}^{-(d-1)/2}\left(\beta_{1,2}^{-(d-1)/2} +\beta_{2,3}^{-(d-1)/2}\right).
\end{split}
\end{equation}
By \eqref{1pn} and \eqref{betafinal}, for any fixed $x_1$ and $x_3$,  we have
\begin{equation}\label{cancelpn}
\begin{split}
&    \int_{S^d}\Big|P_{n}(x_1,x_2)P_n(x_2,x_3)\Big|dx_2 \\
\leq &
Cn^{-(d-1)}      \int_{S^d} (\beta_{1,2}\beta_{2,3})^{-(d-1)/2}dx_2\\
\leq &Cn^{-(d-1)} \beta_{1,3}^{-(d-1)/2}      \int_{S^d}
\left(\beta_{1,2}^{-(d-1)/2} +\beta_{2,3}^{-(d-1)/2}\right)dx_2\\
\leq &Cn^{-(d-1)}\beta_{1,3}^{-(d-1)/2}\left(\int_0^{\pi} \beta_{1,2}^{-(d-1)/2}\sin^{d-1} (\alpha_{1,2} )d\alpha_{1,2} \right.\\
&\left.+
\int_0^{\pi} \beta_{2,3}^{-(d-1)/2}\sin^{d-1} (\alpha_{2,3})d\alpha_{2,3}\right)\\
\leq &C n^{-(d-1)}\beta_{1,3}^{-(d-1)/2}. 
\end{split}
\end{equation}
Using \eqref{cancelpn} $r-2$ times to integrate out the variables $x_2,\ldots, x_{r-1}$, we get 
\begin{equation}
\begin{split}
&\int_{(S^d)^r} \Big| \prod_{i=1}^r P_n(x_i,x_{i+1}) \Big | dx_1\cdots dx_r\\\leq& Cn^{-(d-1)r/2}\int_{S^d}\Big( \int_0^{\pi} \beta_{1,r}^{-(d-1)}\sin^{(d-1)}(\alpha_{1,r} )d\alpha_{1,r} \Big)dx_1\\
\leq &C n^{-(d-1)r/2}.
\end{split}
\end{equation}
This proves 
Lemma \ref{productcontrol}. 

\end{proof}

\begin{proof}[Proof of Lemma \ref{keylemma}]
As in the proof of Lemma \ref{productcontrol}, let  $\alpha_{i,i+1}$ be the angle between $x_i$ and $x_{i+1}$  and set  $\beta_{i,i+1}=\min\{\alpha_{i,i+1},\pi-\alpha_{i,i+1}\}$. 
Recall the function $p_n$ defined in \eqref{pndef}, by \eqref{pnes},
\begin{equation}\label{diffe}
\abs{P_n(x_i,x_{i+1})-p_n(\alpha_{i,i+1})} =\abs{P_n(\cos \alpha_{i,i+1})-p_n(\alpha_{i,i+1})}\leq C (n\beta_{i,i+1})^{-d/2}.
\end{equation}
We can write
\begin{equation}\label{hPn}
h(x_1,\ldots, x_r)\Pi_{i=1}^r P_n(x_i,x_{i+1})	=h(x_1,\ldots, x_r)    \Pi_{i=1}^r p_n(\alpha_{i,i+1})+I_r
\end{equation}
where  the error term $I_r$ is bounded from above as
\begin{equation}\label{bdim2}
\begin{split}
I_r \leq &	C\abs{		h(x_1,\ldots, x_r)}\sum_{j=1}^r (n\beta_{j,j+1})^{-d/2}\Big(\prod_{i=1,i\neq j}^r \left(\abs{P_n(\cos \alpha_{j, j+1})}+\abs{p_n(\alpha_{j,j+1})}\right)\Big)\\
\leq &				Cn^{-\frac{(d-1)(r-1)}{2}}n^{-d/2}  \sum_{j=1}^r \left( \beta_{j,j+1}^{-d/2} \left(\prod_{i=1,i\neq j}^r  \beta_{i,i+1}^{-(d-1)/2} \right)  \right).
\end{split}
\end{equation}The first inequality is given by the estimate \eqref{diffe} together with the  following elementary inequality: given $a_1,\ldots, a_r,  b_1, \ldots, b_r \in \mathbb R$,  one has 
$$ \left|\prod_{i=1}^r a_i-\prod_{i=1}^r b_i\right| \leqslant \sum_{j=1}^r\left|a_j-b_j\right|\left(\prod_{i=1, i\neq j}^r \left(\left|a_i\right|+\left|b_i\right|\right)\right) .
$$
The second inequality in \eqref{bdim2} is given by the estimates \eqref{control pn} and \eqref{control pn2}.

By slightly modifying the proof of Lemma \ref{productcontrol} we can  show that 
\begin{equation}\label{bdim0}
\int_{(S^d)^r}	\beta_{j,j+1}^{-d/2}\left(\prod_{i=1,i\neq j}^r  \beta_{i,i+1}^{-\frac{d-1}{2}} \right)dx_1\cdots dx_r<\infty.
\end{equation}
Combining \eqref{bdim2} and \eqref{bdim0},  we get 
\begin{equation}\label{proder}
\int_{(S^d)^r} \abs{I_r}dx_1\cdots dx_r\leq Cn^{-\frac{(d-1)r}{2}-\frac12}=o(n^{-\frac{(d-1)r}{2}}).
\end{equation}
We define a function
\begin{equation}
g(x_1,\ldots, x_r): =h(x_1,\ldots, x_r) \prod_{i=1}^r \sin^{-(d-1)/2}(\beta_{i,i+1}).
\end{equation}
The proof of Lemma \ref{productcontrol}  implies that 
the function $\Pi_{i=1}^r  \sin^{-\frac{d-1}{2}}(\beta_{i,i+1})$ is integrable over $(S^d)^{ r}$.
On the other hand, by definition of $p_n$,  we  can write 
\begin{equation}\label{hpn}
\begin{split}
& 	   h(x_1,\ldots, x_r)        \prod_{i=1}^r p_n(\alpha_{i,i+1})\\
=& (n+(d-1)/2)^{-(d-1)r/2} \times g(x_1,\ldots, x_r)\\
&\times \prod_{i=1}^r 
\left((-1)^{n\1[\alpha_{i,i+1}>\pi/2]} \cos\left((n+\frac{d-1}{2} )\beta_{i,i+1}-(d-1)\frac{\pi}{4} \right) \right). 
\end{split}
\end{equation}
When computing the integration over $x_1,\ldots, x_r$, we can build a spherical coordinate system $(\theta,\phi)$ around $x_2$ and represent  $x_1$ by $x_2+(\theta,\phi)$. Here  $\theta\in [0,\pi]$ and
$\phi$ has $d-1$ components $\phi_1,\ldots, \phi_{d-1}$.  
We claim that, for almost every (fixed)   $\phi, x_2, \ldots,  x_r$, the  integration of \eqref{hpn} over $\theta$ has the limit 
\begin{equation}\label{2cos}
\begin{split}
\lim_{n\to\infty}	&\int_{S^d} g(x_2+(\theta,\phi),x_2,\ldots, x_r)\times \\
&\prod_{i=1,r} \left( (-1)^{n\1[\alpha_{i,i+1}>\pi/2]} 
\cos\left((n+\frac{d-1}{2} )\beta_{i,i+1}-(d-1)\frac{\pi}{4} \right)   \right)d\theta =0.
\end{split}
\end{equation}
Assume \eqref{2cos} for the moment, by \eqref{hpn} and the dominated convergence theorem, we have 
\begin{equation}\label{prodj}
\int_{(S^d)^r}   h(x_1,\ldots, x_r)        \prod_{i=1}^r p_n(\alpha_{i,i+1})
dx_1\cdots dx_r= o(n^{-\frac{(d-1)r}{2}}).
\end{equation}
Lemma \ref{keylemma} now follows from \eqref{hPn}, \eqref{proder} and \eqref{prodj}. Hence it remains to prove \eqref{2cos}. 

To this end we first rewrite the product of the two $\cos(\cdots)$ factors in \eqref{2cos} as 
\begin{equation}\label{2cos2}
	\begin{split}
&\frac{1}{2} 
\cos\left((n+\frac{d-1}{2} )(\beta_{1,2}+\beta_{r,r+1})-(d-1)\frac{\pi}{2} \right)\\
+&\frac{1}{2}
\cos\left((n+\frac{d-1}{2} )(\beta_{1,2}-\beta_{r,r+1}) 
\right).
\end{split}
\end{equation}
Under the spherical coordinate system, $\alpha_{1,2}=\theta$ so that $\beta_{1,2}=\min\{\theta, \pi-\theta\}$.  Denote by $(\theta',\phi')$ the coordinate of $x_r$ in this system. By making an orthogonal transformation if necessary, we may assume that $\phi'_1=0$.

To compute $\beta_{r,r+1}$, note that 
\begin{equation}
\cos \alpha_{r,r+1}=\cos \alpha_{r,1}= x_1 \cdot x_r= \cos \theta \cos \theta'+\sin \theta \cos \phi_1 \sin \theta'.	
\end{equation} 
If neither $\theta'$ nor $\phi_1$ is  not equal to 0 or $\pi$, then $\alpha_{r,r+1}$, viewed as a function of $\theta$,  is continuously differentiable at all but finite many $\theta$'s,  and satisfies
$$
\abs{ \frac{d\alpha_{r,r+1}}{d\theta}} =
\frac{\abs{-\sin \theta \cos \theta'+ \cos \theta \cos \phi_1 \sin \theta'}}{\sqrt{1-(\cos \theta \cos \theta'+\sin \theta \cos \phi_1 \sin \theta')^2}}  < 1.
$$
Thus,  $\beta_{1,2} \pm \beta_{r,r+1}$ is piecewise differentiable in $\theta$ with a nonzero derivative.
The limit \eqref{2cos} now follows from \eqref{2cos2} and (the proof of)  the Riemann-Lebesgue lemma. 

Note that \eqref{2cos} is not true for $r=2$ where  the second $\cos(\cdots)$ factor in \eqref{2cos2} is a constant, which further implies that the integration \eqref{2cos} may tend to some constant other than 0. Thus we need the assumption $r\geq 3$.

\end{proof}

\section{Proof of Theorem \ref{mean}}
In this section we prove Theorem \ref{mean} regarding the asymptotic expansion of the mean  $\E(L_nf)$.  By \eqref{defrho1} and \eqref{rhodet}, we have
\begin{equation}\label{elnf}
\E(L_nf)=\int_{(S^d)^{ k}}f(x_1,\ldots, x_k)\det\Big(K_n(x_i,x_j)_{1\leq i,j\leq k}\Big)dx_1\cdots dx_k
\end{equation} 
We can expand the determinant as
\begin{equation*}
\begin{split}
\det\Big(K_n(x_i,x_j)_{1\leq i,j\leq k}\Big)= &\prod_{i=1}^kK_n(x_i,x_i)-
\sum_{1\leq i<j\leq k}K_n^2(x_i,x_j)\prod_{\ell \neq i,j}K_n(x_{\ell},x_{\ell})\\
& +\textrm{remainder term},
\end{split}
\end{equation*}
where the remainder term (denoted by $I_9$) is the sum of $\text{sgn}(\sigma)\Pi_{i=1}^k K_n(x_i,x_{\sigma(i)})$ over all $\sigma's \in \text{Sym}(k)$ which are neither the identity nor a transposition (a permutation which exchanges two elements and keeps all others fixed). 
Using the cycle decomposition of permutations,  \eqref{knpn} and \eqref{pnprop2}, we have the upper bound
\begin{equation}\label{i9bd}
\begin{split}
\abs{I_9}\leq &C\left(\frac{k_n}{s_d}\right)^k \left(\sum_{\sigma=(i_1j_1)(i_2j_2)  } P_n^2(x_{i_1},x_{j_1}) P_n^2(x_{i_2},x_{j_2})\right. \\
& \left. +\sum_{3\leq r\leq  k} \sum_{\sigma=(i_1\cdots i_r) } \abs{P_n(x_{i_1},x_{i_2}) \cdots P_n(x_{i_{r-1}},x_{i_r})P_n(x_{i_r},x_{i_1}) }  \right),
\end{split}
\end{equation}where  $C$ is some constant depending on $k$.

Combining \eqref{reproduce4},  the estimate $k_{n}=\Theta (n^{d-1})$,  the boundedness of $f$ and Lemma \ref{productcontrol}, we have the upper bound
\begin{equation}\label{eerctl}
\int_{(S^d)^{ k}} \abs{f(x_1,\ldots, x_k)}\abs{I_9}dx_1\cdots dx_k \leq 
Cn^{(d-1)k}(n^{-2(d-1)}+n^{-3(d-1)/2} ), 
\end{equation}which gives the error term in  \eqref{eq:mean}. 
We also have
\begin{equation}\label{emain}
\begin{split}
&\int_{(S^d)^{  k}} f(x_1,\ldots, x_k)\\  &\times \Big(\prod_{i=1}^kK_n(x_i,x_i)-
\sum_{1\leq i<j\leq k}K_n^2(x_i,x_j)\prod_{\ell \neq i,j}K_n(x_{\ell},x_{\ell}) \Big)dx_1\cdots dx_k \\
=& \left(\frac{k_n}{s_d}\right)^k \int_{(S^d)^{  k}}f(x_1,\ldots, x_k)dx_1\cdots dx_k\\
&-\left(\frac{k_n}{s_d}\right)^{k-2} \int_{(S^d)^{  2}} \sum_{1\leq i<j\leq k} f_{i,j}(x,y)K_n^2(x,y)dxdy,
\end{split}
\end{equation}
where  $f_{i,j}$ is the $(i,j)$-margin function of $f$ as defined in \eqref{ijmargin}. Applying Lemma \ref{sc} to \eqref{emain}, we will get the first two terms in \eqref{eq:mean},  which
finishes the proof of  Theorem \ref{mean}. 

\section{Proof of Theorem \ref{clt} }
\subsection{Univariate case}

The univariate linear statistics for \de processes has been understood very well. The following result proved in \cite{AS} is particularly useful.  Given a family of determinantal point processes with kernel $K_n$ and measurable bounded univariate functions $f_n$ with compact support (to ensure integrability),  let $L_nf_n$ and  $L_n\abs{f_n}$ be the linear statistics of $f_n$ and $\abs{f_n}$, respectively. Suppose that 
\beq\label{cojd}
\mathrm{\mathrm{Var}}(L_nf_n)\to \infty, \,\sup \abs{f_n}=o(\mathrm{\mathrm{Var}}(L_nf_n)^{\ep}), \,\,
\E(L_n\abs{f_n})=O(\mathrm{Var}(L_nf_n)^{\delta})
\eeq 
for any $\ep>0$ and some $\delta>0$, 
then one has the central limit theorem, $$
\frac{L_nf_n-\E(L_nf_n)}{\sqrt{\mathrm{Var}(L_nf_n)}} \xrightarrow{\textrm{d}} N(0,1).
$$
In our case, the integrability condition  holds trivially as the test function is bounded and the underlying space $S^d$ is compact. Thus, it remains to check the three conditions  in \eqref{cojd} in order to  to prove Theorem \ref{clt} for the univariate case.

Note that 		the variance of $L_nf$ is given by  
\beq \label{Var_formula}
\mathrm{\mathrm{Var}}(L_nf)=\frac{1}{2}\int_{S^d}\int_{S^d} (f(x)-f(y))^2 K_{n}^2(x,y)dxdy.
\eeq
By Lemma \ref{sc}, one immediately has the limit, 
\beq\label{limit_Variance}
\begin{split}
&	\lim\limits_{n\to \infty} \frac{\mathrm{Var}(L_n f)}{k_{n}}\\
&=\frac{2^{d-2}}{\Gamma(d)\pi} \Big(\frac{\Gamma(\frac{d}{2})}{s_d}\Big)^2 \int_{S^d} \int_0^{\pi} 
\int_{\Omega} (f(x)-f(x+(\theta, \phi)))^2 J(\phi) 	d\phi d\theta dx.\\
&=\frac{2^{d-2}}{\Gamma(d)\pi}\Big (\frac{\Gamma(\frac{d}{2})}{s_d}\Big)^2 \int_{S^d} \int_{S^d}\frac{ (f(x)-f(y))^2 }{\sin^{d-1} (\arccos (x\cdot y)) } dxdy. 
\end{split}
\eeq
By  definition \eqref{def_fi}, the 1-margin function is itself  for $k=1$, i.e.,  $F(x)=f(x)$, and thus \eqref{limit_Variance} gives the limit of variance in \eqref{q2case1} for $k=1$. 
The assumption that $F(x)$ is not constant almost everywhere 
implies the first condition $\mathrm{\mathrm{Var}}(L_nf_n)=\Theta(k_n)\to \infty$. The second condition is satisfied since $f$ is bounded. The third condition is satisfied with $\delta=1$ by the fact that 
$$\E(L_n\abs{f})=\int_{S^d}\abs{f(x)}K_n(x,x)dx=\frac{k_{n}}{s_d}\int_{S^d}\abs{f(x)}dx=\Theta(k_n).$$
This completes the proof of Theorem \ref{clt}  for the univariate case.

\subsection{Multivariate case} Now we prove Theorem \ref{clt}   for the multivariate linear statistics.  There are two steps in the proof. We will first derive the growth order of the variance $\mathrm{Var}(L_nf)=Q_2(L_nf)$, then we will prove $Q_m(L_nf)=o(Q_2(L_nf)^{\frac{m}{2}})$ for all fixed $m\geq 3$. This will imply the Gaussian limit for 
the multivariate linear statistics by the method of cumulants.

We first  introduce a notation. Given the set $A$ which is a collection of $(\T, \sigma)$-graph, we define
\beq \label{qmexpress}Q_m(L_nf,A):=
\sum_{(\T,\sigma)\in A} 
\int_{(S^d)^{\abs{\T}}} f(\T)   \text{sgn}(\sigma) \Pi_{q\in \textrm{Range}(\T)} K(x_q,x_{\sigma(q)})d\x,
\eeq where $d\x$ is the volume element involved in the integration. With such notation, we have $Q_m(L_nf)=Q_m(L_nf, \mathcal{C}(m))$ by \eqref{express cumu} (recall the definition of $\mathcal{C}(m)$ in \eqref{def*}).

We first estimate  $Q_2(L_nf)$,  which is   the variance $\mathrm{Var}(L_nf)$. We can split the expression for 
$Q_2(L_nf)$ into 3 parts:
$$
Q_2(L_nf)=Q_m(L_nf, \mathcal C(2))=Q_2(L_nf,A_1)+Q_2(L_nf, A_2)+Q_2(L_nf,A_3),
$$
where 
$A_1,A_2, A_3$ are disjoint subsets of $\mathcal{C}(2)$ defined as follows:
\begin{align*}A_1&=\{(\T,\sigma)\in  \mathcal C(2):\abs{\T}=2k, \sigma \mbox{ is a transposition, i.e. } \sigma=(ij) \mbox{ for some } i,j\},\\
A_2&=\{(\T,\sigma)\in  \mathcal C(2):\abs{\T}=2k-1, \sigma=id  \},\\
A_3&= \mathcal C(2)-A_1-A_2.
\end{align*}

\begin{lem}\label{Varctl}	
We have the following two estimates. 
\begin{enumerate}
\item 
\begin{equation}\label{q2a1a2}
\begin{split}
	&Q_2(L_nf, A_1)+Q_2(L_nf, A_2)\\
	=&\left(\frac{k_n}{s_d}\right)^{2k-2}\frac{k_n2^{d-2}}{\Gamma(d)\pi}\Big (\frac{\Gamma(\frac{d}{2})}{s_d}\Big)^2 \int_{S^d} \int_{S^d}  \frac{(F(x)-F(y))^2}{\sin^{d-1} (\arccos (x\cdot y)) } dxdy\\&+o(n^{(d-1)(2k-1)}).
\end{split}
\end{equation} 
\item  $Q_2(L_nf,A_3)=
o\left(n^{(d-1)(2k-1)}\right)$.
\end{enumerate}
\end{lem}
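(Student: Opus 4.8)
The plan is to apply the cumulant formula \eqref{express cumu} with $m=2$, which expresses $Q_2(L_nf)=\mathrm{Var}(L_nf)$ as a sum over connected pairs $(\T,\sigma)\in\mathcal C(2)$, and to estimate each pair through the cycle structure of $\sigma$ together with the size $|\T|$. The mechanism is uniform across the three families: after factoring the integral over the cycles of $\sigma$, a fixed point $q$ contributes the constant $K_n(x_q,x_q)=k_n/s_d$ and, upon integration, $\int_{S^d}K_n(x_q,x_q)\,dx_q=k_n=\Theta(n^{d-1})$, whereas a cycle of length $\ell\ge 2$ contributes $O(n^{(d-1)\ell/2})$ by Lemma \ref{productcontrol}. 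Writing $p_0$ for the number of fixed points of $\sigma$ and $L$ for the total length of its non-trivial cycles, so $|\T|=p_0+L$, bounding $f(\T)$ by a constant gives the crude estimate $|\mathfrak{Int}(f,(\T,\sigma))|=O\!\left(n^{(d-1)(p_0+L/2)}\right)=O\!\left(n^{(d-1)(|\T|-L/2)}\right)$.

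For part (1) I would evaluate $A_1$ and $A_2$ exactly. In $A_1$ the two rows carry disjoint labels, so up to relabelling there is a single equivalence class, represented by $T_1=(1,\dots,k)$ and $T_2=(k+1,\dots,2k)$; connectedness forces $\sigma$ to be a transposition $(i,\,k+j)$ joining the rows, and there are $k^2$ such choices. For $\sigma=(i,k+j)$ the $2k-2$ fixed points yield $(k_n/s_d)^{2k-2}$, the transposition yields $K_n^2(x_i,x_{k+j})$ with $\mathrm{sgn}(\sigma)=-1$, and integrating out the remaining $2(k-1)$ variables replaces $f(T_1)f(T_2)$ by $f_i(x)f_j(y)$; summing over $i,j$ and recalling $F=\sum_i f_i$ gives $Q_2(L_nf,A_1)=-(k_n/s_d)^{2k-2}\int_{(S^d)^2}F(x)F(y)K_n^2(x,y)\,dxdy$. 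In $A_2$ the rows share exactly one label, and the classes are indexed by the pair $(i,j)$ recording the slot of that shared label in each row, giving $k^2$ classes; since $\sigma=\mathrm{id}$ all $2k-1$ points are fixed and integration yields $(k_n/s_d)^{2k-1}\int_{S^d}f_i(x)f_j(x)\,dx$, so $Q_2(L_nf,A_2)=(k_n/s_d)^{2k-1}\int_{S^d}F(x)^2\,dx$.

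I would then combine the two using the reproducing identity \eqref{sdsdsdsd} in the form $\int_{S^d}K_n^2(x,y)\,dy=k_n/s_d$ to rewrite the $A_2$ term as $(k_n/s_d)^{2k-2}\int_{(S^d)^2}F(x)^2K_n^2(x,y)\,dxdy$; adding the $A_1$ term and symmetrizing in $(x,y)$ (legitimate since $K_n^2$ is symmetric) produces $\tfrac12(k_n/s_d)^{2k-2}\int_{(S^d)^2}(F(x)-F(y))^2K_n^2(x,y)\,dxdy$. Applying Lemma \ref{sc} to the bounded function $(F(x)-F(y))^2$ converts $\tfrac12\int\int(F(x)-F(y))^2K_n^2$ into $k_n\frac{2^{d-2}}{\Gamma(d)\pi}(\Gamma(d/2)/s_d)^2\int\int(F(x)-F(y))^2\sin^{-(d-1)}(\arccos(x\cdot y))\,dxdy+o(k_n)$, and since $(k_n/s_d)^{2k-2}o(k_n)=o(k_n^{2k-1})=o(n^{(d-1)(2k-1)})$ this is exactly the claimed identity \eqref{q2a1a2}.

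For part (2) I would show the crude bound already suffices, by checking that every $(\T,\sigma)\in A_3$ satisfies $|\T|-L/2<2k-1$. Since $L$ is a sum of integers $\ge 2$, one has $L\in\{0,2,3,4,\dots\}$; solving $|\T|-L/2=2k-1$ subject to $|\T|\le 2k$ leaves only $L=0,\ |\T|=2k-1$ (which is $A_2$) and $L=2,\ |\T|=2k$ (a single transposition linking the rows, which is $A_1$), while $L=0,\ |\T|=2k$ is disconnected and hence not in $\mathcal C(2)$, and any $L\ge 3$ forces $|\T|-L/2\le 2k-\tfrac32$. Thus on $A_3$ the exponent is at most $(d-1)(2k-\tfrac32)$, and as $\mathcal C(2)$ is finite for fixed $k$, summing the finitely many terms gives $Q_2(L_nf,A_3)=O(n^{(d-1)(2k-3/2)})=o(n^{(d-1)(2k-1)})$, the second assertion. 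The main obstacle is the combinatorial bookkeeping in part (1): correctly enumerating the equivalence classes in $A_1$ and $A_2$ and matching each connection point to the right margin $f_i$, so that the double sums over $(i,j)$ collapse into the function $F=\sum_i f_i$; once this is in place, the only analytic inputs are the already-established Lemma \ref{sc} and the reproducing property, and part (2) reduces to an elementary count.
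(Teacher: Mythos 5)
Your proposal is correct and takes essentially the same approach as the paper: part (1) reproduces the paper's computation exactly (exact evaluation of $A_1$ and $A_2$ via the margin functions, recombination into $\frac12\left(\frac{k_n}{s_d}\right)^{2k-2}\int (F(x)-F(y))^2K_n^2\,dxdy$ using \eqref{sdsdsdsd}, then Lemma \ref{sc}), and part (2) rests on the same crude estimate in which fixed points of $\sigma$ contribute $\Theta(n^{d-1})$ and each cycle of length $\ell\geq 2$ contributes $O(n^{(d-1)\ell/2})$ by Lemma \ref{productcontrol}. The only cosmetic difference is bookkeeping: the paper splits $A_3$ into subsets $A_4,A_5,A_6$ according to $\abs{\T}=2k$, $2k-1$, or $\leq 2k-2$ and bounds $a(\sigma)$ in each case, whereas you work with the single exponent $\abs{\T}-a(\sigma)/2$ and check that only $A_1$ and $A_2$ can make it reach $2k-1$.
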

The limit \eqref{q2case1} now follows from Lemma \ref{Varctl}. In particular, since $F$ is not  constant almost everywhere,  we have the following estimate of the variance
\begin{equation}\label{q2ctlcase1}
Q_2(L_nf)=\Theta(n^{(d-1)(2k-1)}).
\end{equation}

\begin{proof}[Proof of Lemma \ref{Varctl} ]
We first consider $Q_2(L_nf,A_1)$. If $\abs{\T}=2k$, then $\T$ has to be
$((1,\ldots, k),(k+1,\ldots, 2k))$.
Pick any $1\leq i\leq k$ and $k+1\leq j\leq 2k$. 	Then for such $\T$ and 
$\sigma$	we have
\begin{equation*}
\begin{split}
Q_2(L_nf,(\T,\sigma))&=-\int_{(S^d)^{2k}} f(x_1,\ldots, x_k)f(x_{k+1},\ldots, x_{2k})\Big(\frac{k_n}{s_d}\Big)^{2k-2}K_n^2(x_i,x_j)d\x\\
&=-\Big(\frac{k_n}{s_d}\Big)^{2k-2} \int_{(S^d)^2} f_i(x_i)f_{j-k}(x_j)K_n^2(x_i,x_j)dx_idx_j,
\end{split}
\end{equation*}
where  the second equality is given by the definition of the $i$-margin function $f_i$ in \eqref{def_fi}.
Summing over all $i,j$, we see that $Q_2(L_nf, A_1)$ is equal to
\begin{equation}\label{Q2A1}
\begin{split}
&-\Big(\frac{k_n}{s_d}\Big)^{2k-2}\int_{(S^d)^2}  \left(\sum_{i=1}^k f_i(x)\right)  \left(\sum_{i=1}^k f_i(y)\right) K_n^2(x,y)dxdy\\
=&-\left(\frac{k_n}{s_d}\right)^{2k-2}\int_{(S^d)^2} F(x)F(y)K_n^2(x,y)dxdy.
\end{split}
\end{equation}
Now we consider  $A_2$.
Since $\T\in S(2,k)$ and $\abs{\T}=2k-1$, $\T$ has to satisfy $|T_1\cap T_2|=1$. The number of ways to choose 1 location in $T_1$ and
1 location in $T_2$ are both $k$. 
Therefore,  $Q_2(L_nf,A_2)$ equals 
\begin{equation*}
\begin{split}
&\sum_{i=1}^{k}\sum_{j=k+1}^{2k}\Big(\frac{k_n}{s_d}\Big)^{2k-1}\int_{(S^d)^{2k-1}} f(x_1, \ldots,  x_k)
f(x_{k+1}, \ldots, x_{j-1}, x_i, x_{j},\ldots, x_{2k-1})d\x\\
=&\sum_{i=1}^{k}\sum_{j=k+1}^{2k}\Big(\frac{k_n}{s_d}\Big)^{2k-1} 
\int_{S^d} f_{i}(x)f_{j-k}(x)dx\\
=&\Big(\frac{k_n}{s_d}\Big)^{2k-1}  \int_{S^d} F(x)^2dx.
\end{split}
\end{equation*}
Adding up $Q_2(L_nf,A_1)$ and  $Q_2(L_nf,A_2)$ and using \eqref{sdsdsdsd}, we have 
$$
Q_2(L_nf,A_1)+Q_2(L_nf,A_2)=\Big(\frac{k_n}{s_d}\Big)^{2k-2} \frac{1}{2}\int_{(S^d)^2} 
(F(x)-F(y))^2K_n^2(x,y)dxdy.
$$
Now \eqref{q2a1a2} follows by applying Lemma \ref{sc} to the function $(F(x)-F(y))^2$.

Now we turn to the second part of Lemma \ref{Varctl}.
We can further decompose the set $A_3$ into 3 subsets $A_4,A_5,A_6$ corresponding to $\abs{\T}=2k$ or $2k-1$ or smaller than $2k-1$. 	
For any $(\T,\sigma)\in A_4$, $\sigma$ is neither a transposition  nor identity (because $(\T,\sigma)$ has to induce a connected graph), thus there are at least three different indices $q$ such that $\sigma(q)\neq q$. By \eqref{express cumu} and Lemma \ref{productcontrol} with $r=3$,  we have 
\begin{equation}\label{bda4}
Q_2(L_nf,A_4)=O(n^{(2k)(d-1)}n^{-\frac{3(d-1)}{2}}  )=o(n^{(d-1)(2k-1)}).
\end{equation}
For any $(\T,\sigma)\in A_5$, it is not in $A_2$, i.e., $\sigma$ is not identity,  and thus there are at least two $q$'s such that $\sigma(q)\neq q$. Applying Lemma \ref{productcontrol} with $r=2$, we get 
\begin{equation}\label{bda5}
Q_2(L_nf,A_5)=O(n^{(2k-1)(d-1)}n^{-(d-1)}  )=o(n^{(d-1)(2k-1)}).
\end{equation}
For any $(\T,\sigma)\in A_6$, it's clear that if $\abs{\T}\leq 2k-2$, then for any $\sigma$, we have 
$$
\abs{Q_2(L_nf, (\T,\sigma))}\leq C\Big(\frac{k_n}{s_d}\Big)^{\abs{\T}}=O(n^{(d-1)\abs{\T}})=o(n^{(d-1)(2k-1)}).
$$ 
Hence, we have 
\begin{equation}\label{bda6}
Q_2(L_nf,A_6)=o(n^{(d-1)(2k-1)}).
\end{equation}
Combining \eqref{bda4}, \eqref{bda5} and \eqref{bda6}, we  have 
$$
Q_2(L_nf,A_3)=Q_2(L_nf,A_4)+Q_2(L_nf,A_5)+Q_2(L_nf,A_6)=o(n^{(d-1)(2k-1)}),
$$
which  completes the proof of Lemma \ref{Varctl}. 
\end{proof}
Next we will prove the estimates for the higher order cumulants. \begin{lem} \label{comc}For any $m\geq 3$, it holds that 
\beq\label{calim}Q_m(L_nf)=o(\mathrm{Var}(L_nf)^{\frac{m}{2}}),\,\,\mbox{i.e.,}\,\,
Q_m(L_nf)=o(n ^{(d-1)(km-\frac{m}{2})}).\eeq\end{lem}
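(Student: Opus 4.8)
The plan is to start from the graphical cumulant formula \eqref{express cumu} of Lemma \ref{soss}, which writes $Q_m(L_nf)$ as a \emph{finite} sum, over connected pairs $(\T,\sigma)\in\mathcal C(m)$, of the integrals $\mathfrak{Int}(f,(\T,\sigma))$. To each such pair I attach three combinatorial quantities: $b:=mk-\abs{\T}$ (the number of index coincidences, i.e.\ of solid red edges), $s:=\abs{\T}-\#\{\text{fixed points of }\sigma\}$ (the number of indices moved by $\sigma$), and $c$ (the number of nontrivial cycles of $\sigma$). By \eqref{knpn} and \eqref{ondiagonalkernel}, every factor $K_n(x_q,x_{\sigma(q)})$ with $\sigma(q)=q$ equals the constant $k_n/s_d$, while the remaining factors organize into the cycles of $\sigma$. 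Bounding $\abs{f(\T)}\le\norm{f}_\infty^m$ and using that distinct cycles involve disjoint variables, the integral factorizes; Lemma \ref{productcontrol} applied to each cycle of length $r_j\ge 2$ (which gives $O(n^{(d-1)r_j/2})$ after the prefactor $(k_n/s_d)^{r_j}$) together with $k_n=\Theta(n^{d-1})$ for each fixed point yields the uniform bound
\beq\label{plan:genbd}
\abs{\mathfrak{Int}(f,(\T,\sigma))}=O\!\left(n^{(d-1)(mk-b-s/2)}\right).
\eeq

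The combinatorial heart of the argument is to show that connectivity forces $2b+s\ge m$, so the exponent in \eqref{plan:genbd} never exceeds $(d-1)(mk-m/2)$. I would prove this by counting components: starting from the $m$ rows (each internally joined by black edges) as $m$ components, processing the solid red edges first reduces the number of components by at most $b$, and then processing the dotted red edges one $\sigma$-cycle at a time reduces it by at most $r_j-1$ per cycle, since after the solid merges all positions carrying a common index lie in one component and a length-$r_j$ cycle links $r_j$ components cyclically. Hence the total reduction is at most $b+(s-c)$, and connectivity forces $b+s-c\ge m-1$. Writing $2b+s=(b+s-c)+(b+c)$ then gives $2b+s\ge m$, and a short case analysis shows that for $m\ge 3$ equality holds \emph{only} when $b=0$, $c=1$ and $s=m$, i.e.\ when $\sigma$ is a single $m$-cycle, every index is distinct, and exactly one moved index sits in each row. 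Recalling from \eqref{q2ctlcase1} that $\mathrm{Var}(L_nf)=Q_2(L_nf)=\Theta(n^{(d-1)(2k-1)})$, so that $\mathrm{Var}(L_nf)^{m/2}=\Theta(n^{(d-1)(mk-m/2)})$, it suffices to prove $Q_m(L_nf)=o(n^{(d-1)(mk-m/2)})$.

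For every non-extremal pair, i.e.\ $2b+s\ge m+1$, the bound \eqref{plan:genbd} already gives $\abs{\mathfrak{Int}}=O(n^{(d-1)(mk-m/2-1/2)})=o(n^{(d-1)(mk-m/2)})$, so these are harmless. For the extremal pairs (single $m$-cycles with $b=0$) the crude bound \eqref{plan:genbd} is exactly of the critical order, so I cannot afford to discard $f$. Instead I keep $f$ and integrate out the $m(k-1)$ fixed-point variables first: since $b=0$ each such variable appears in exactly one factor $f(T_i)$, and its integration (carrying one constant factor $k_n/s_d$) replaces $f(T_i)$ by the appropriate $j_i$-margin $f_{j_i}$ of \eqref{def_fi}. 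This leaves a single cyclic integral of length $m$,
\beq
\mathfrak{Int}=\operatorname{sgn}(\sigma)\Big(\tfrac{k_n}{s_d}\Big)^{mk}\int_{(S^d)^m}\Big(\prod_{i=1}^m f_{j_i}(y_i)\Big)\prod_{i=1}^m P_n(y_i,y_{i+1})\,dy_1\cdots dy_m,
\eeq
with $y_{m+1}=y_1$ and $h:=\prod_i f_{j_i}$ bounded. Because $m\ge 3$, Lemma \ref{keylemma} applies and shows the last integral is $o(n^{-(d-1)m/2})$, whence $\abs{\mathfrak{Int}}=\Theta(n^{(d-1)mk})\,o(n^{-(d-1)m/2})=o(n^{(d-1)(mk-m/2)})$. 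Summing the finitely many extremal and non-extremal contributions then completes the proof.

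The main obstacle is precisely the extremal configurations: for these the power counting is saturated, so the decisive gain comes not from the absolute bound of Lemma \ref{productcontrol} but from the oscillatory cancellation in the cyclic integral captured by Lemma \ref{keylemma}. This is exactly why the hypothesis $m\ge 3$ (equivalently, cycle length at least three) is essential, mirroring the remark after that lemma that the analogue fails for $r=2$.
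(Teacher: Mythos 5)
Your proof is correct and takes essentially the same route as the paper's: the graphical cumulant formula of Lemma \ref{soss}, power counting via Lemma \ref{productcontrol} controlled by a connectivity inequality (your $2b+s\geq m$ with its equality characterization is a mild sharpening and unification of the paper's Lemma \ref{lem:tree}), and Lemma \ref{keylemma} to extract the extra decay in the critical configurations where all indices are distinct and $\sigma$ is a single $m$-cycle meeting each row once. The only difference is organizational: the paper splits $\mathcal{C}(m)$ into three sets $B_1,B_2,B_3$ and bounds each separately, whereas you dispose of all non-extremal pairs in one stroke via the strict inequality $2b+s\geq m+1$.
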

This lemma will imply the convergence of the multivariate linear statistics to the Gaussian distribution \eqref{connormal}  by the method of cumulants. 
To prove Lemma \ref{comc}, we first need the following lemma.   	\begin{lem}\label{lem:tree}
Given a permutation $\sigma$, let $a(\sigma)$ be the number of  elements $q$ such that $\sigma(q)\neq q$. 
Suppose the $(\T,\sigma)$-graph is connected, then we have 
\begin{equation}\label{tree}
km-\abs{\T}+a(\sigma)\geq m-1+\1[{\sigma \neq id}]. 
\end{equation}
\end{lem}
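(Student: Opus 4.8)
The plan is to pass from the $(\T,\sigma)$-graph to a quotient graph on the \emph{distinct values} and to run a spanning-tree count there. First I would contract, in the $(\T,\sigma)$-graph, every solid-red clique (the set of vertices $(i,j)$ carrying a fixed common value) to a single vertex. This produces a graph $G'$ whose vertex set is exactly $\operatorname{Range}(\T)$, so $G'$ has $\abs{\T}$ vertices; since each block $\{(i,j):1\le j\le k\}$ is internally connected by its black path, $G'$ is connected if and only if the original $(\T,\sigma)$-graph is. Hence $G'$ is a connected graph on $\abs{\T}$ vertices, and a spanning tree of it uses exactly $\abs{\T}-1$ edges.

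Next I would catalogue the edges of $G'$. The black edges of each $T_i$ form a path through its $k$ distinct values, so altogether they contribute at most $m(k-1)$ distinct edges. The dotted red edges join a value $v$ to $\sigma(v)$ whenever $\sigma(v)\ne v$, so each nontrivial cycle $(v_1\,v_2\,\cdots\,v_\ell)$ of $\sigma$ (length $\ell\ge 2$) contributes precisely the edges $\{v_1,v_2\},\ldots,\{v_\ell,v_1\}$, namely a cycle graph $C_\ell$ when $\ell\ge 3$ and a single edge when $\ell=2$. Writing $s$ for the number of nontrivial cycles of $\sigma$, we have $a(\sigma)=\sum_t \ell_t$ summed over these $s$ cycles.

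The key step would be the spanning-tree count. Fix a spanning tree $\mathcal{T}$ of $G'$, with its $\abs{\T}-1$ edges, and label each tree edge \emph{black} if the corresponding value-pair carries some black slot and \emph{purely red} otherwise (these are the only two possibilities). The black tree edges number at most $m(k-1)$. For the purely red ones, I would use that a tree can contain at most $\ell-1$ of the $\ell$ edges of the cycle $C_\ell$ coming from a length-$\ell$ cycle of $\sigma$, since including all $\ell$ would close a circuit; summing over the $s$ nontrivial cycles bounds the red tree edges by $\sum_t(\ell_t-1)=a(\sigma)-s$. Combining the two bounds gives $\abs{\T}-1\le m(k-1)+a(\sigma)-s$, which rearranges to $km-\abs{\T}+a(\sigma)\ge m-1+s$. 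As $s\ge 1$ exactly when $\sigma\ne id$, this is precisely \eqref{tree}.

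The main obstacle is securing the extra $+\1[\sigma\ne id]$. A crude count of moved points alone yields only $km-\abs{\T}+a(\sigma)\ge m-1$ and loses the final unit when $\sigma$ is, for instance, a single $3$-cycle with no transposition. The refinement that resolves this is to count the \emph{number} of nontrivial cycles $s$ rather than the number of moved points, exploiting that every $\sigma$-cycle of length $\ell$ supplies a genuine cycle $C_\ell$ in $G'$, forcing any spanning tree to discard at least one red edge per cycle; this is exactly what converts the deficit into the desired indicator term.
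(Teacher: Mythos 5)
Your proof is correct, and while it rests on the same underlying graph-theoretic principle as the paper's (a connected graph on $V$ vertices has at least $V-1$ edges), the decomposition is genuinely different. The paper works on the original $km$-vertex graph: it first prunes each solid-red clique to a path, so that the solid edges number exactly $km-\abs{\T}$, and then splits into two cases — for $\sigma=id$ it applies the edge bound directly, while for $\sigma\neq id$ it contracts all black and solid-red edges and argues that the at least $m-(km-\abs{\T})$ surviving super-vertices must be linked by dotted edges, of which at most $a(\sigma)-1$ are needed, the saving of one edge being justified only by a brief parenthetical remark. You instead contract the value classes, so the vertex count is exactly $\abs{\T}$, and run a single spanning-tree count with no case distinction: black edges contribute at most $m(k-1)$ tree edges, and since the dotted edges of $G'$ form a disjoint union of cycle graphs $C_{\ell_t}$, one per nontrivial cycle of $\sigma$ (a $2$-cycle giving a single edge), the tree can contain at most $\sum_t(\ell_t-1)=a(\sigma)-s$ of them. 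This buys two things: the acyclicity of a spanning tree makes the ``discard one dotted edge per cycle'' step transparent rather than ad hoc, and you obtain the strictly stronger inequality $km-\abs{\T}+a(\sigma)\geq m-1+s$, with $s$ the number of nontrivial cycles of $\sigma$, which implies \eqref{tree} since $s\geq \1[{\sigma\neq id}]$. One trivial remark: only one direction of your ``if and only if'' claim about connectivity of $G'$ is actually needed (a quotient of a connected graph is connected), and the justification of the converse should invoke the connectedness of the contracted value cliques rather than the black paths; neither point affects the validity of the argument.
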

\begin{proof} 
\eqref{tree} is essentially due to the simple fact in graph theory  that for a connected graph the number of edges is not smaller than the number of vertices minus 1. 

Before applying this fact, we note that, due to the construction of the $(\T,\sigma)$-graph, the connectivity property of the graph is not affected by removing some
redundant red edges. 
Indeed, if
a vertex has $\ell$ solid red edges, then it lies in a clique (i.e., a complete graph)  of size $\ell+1$ formed by red solid edges only. We can change this clique to a path graph by removing
$
\frac{\ell (\ell+1)}{2}-\ell =\frac{\ell (\ell-1)}{2}
$
red solid	edges  without affecting the connectivity.
After the edge removals, the number of solid red  edges becomes $km-\abs{\T}$. 

We now consider the new $(\T,\sigma)$-graph after removing some redundant red edges as described above. Note that
the total number of vertices and black edges are equal to $km$ and $(k-1)m$, respectively.  
\begin{itemize}
\item If $\sigma=id$, then there is no dotted red edge. The number of red solid edges (after the edge removals)  is equal to $km-\abs{\T}$.	
Hence, by the connectivity of the graph,  we have
$$
(k-1)m+km-\abs{\T}\geq km-1,
$$
which proves \eqref{tree}.
\item 
If $\sigma\neq id$, then we have dotted red edges.
We now perform a contraction of the graph by contracting all vertices connected by black or red solid edges into a single one. After this contraction, the number of remaining vertices is at least
$$
m-(km-\abs{\T}).
$$ These remaining vertices must be connected by dotted red edges to ensure that the $(\T,\sigma)$-graph is connected, whose number can be upper bounded by $a(\sigma)-1$. (We may remove one dotted red edge without affecting the connectivity, if the number of the vertices is $a(\sigma)$.) This implies 
$$
a(\sigma)-1\geq m-(km-\abs{\T})-1,
$$ which proves \eqref{tree}  in the case $\sigma\neq id$.
\end{itemize}
\end{proof}

Now we decompose $\mathcal{C}(m)$ into the following three subsets,  
\begin{equation}\label{}
\begin{split}
B_1&=\{(\T,\sigma)\in \mathcal{C}(m):\abs{\T}=km, a(\sigma)=m\},\\
B_2&= (\mathcal{C}(m)-B_1) \cap  \{(\T,\sigma)\in \mathcal{C}(m):\sigma=id\},\\
B_3&= (\mathcal{C}(m)-B_1) \cap  \{(\T,\sigma)\in \mathcal{C}(m):\sigma\neq id\}.
\end{split}
\end{equation}
For any $(\T,\sigma)\in B_1$,  by  the restrictions that $a(\sigma)=m\geq 3$ and $(\T,\sigma)\in \mathcal{C}(m)$ ,  the cycle decomposition of $\sigma$ must be the multiplication of one cyclic permutation of length $m$  and $(mk-m)$ cyclic permutations of length 1, e.g., $\sigma=(12\cdots m)(m+1)\cdots(km)$.  Applying Lemma \ref{keylemma} with $r=m\geq 3$, we have $$Q_m(L_nf, (\T,\sigma))=o( n^{(d-1)\abs{\T}}  n^{-\frac{(d-1)m}{2}}    )=o(n^{(d-1)(km-\frac{m}{2})}),$$ which further implies that 
\begin{equation}\label{bdb1}
Q_m(L_nf, B_1)=o(n^{(d-1)(km-\frac{m}{2})}).
\end{equation}
For any $(\T,\sigma)\in B_2$, by $m\geq 3$,  \eqref{express cumu}, \eqref{tree} and the boundedness of   $f$,  we have 
$$
Q_m(L_nf, (\T,\sigma))=O( n^{(d-1)\abs{\T}}  )=O(n^{(d-1)(km-m+1)})
=o(n^{(d-1) (km-\frac m2)}).
$$
Therefore, we get the estimate 
\begin{equation}\label{bdb2}
Q_m(L_nf, B_2)=o(n^{(d-1) (km-\frac m2)}).
\end{equation}
For any  $(\T,\sigma)\in B_3$,  by the boundedness of  $f$ and Lemma \ref{productcontrol} with $r=a(\sigma)\geq 2$,  we have 
$$
Q_m(L_nf, (\T,\sigma))  )=O(n^{(d-1)\abs{\T}}n^{-(d-1)a(\sigma)/2}  )
).
$$
If $\abs{\T}=km$, then we must have $a(\sigma)>m$ since $(\T,\sigma)\in \mathcal{C}(m)$ is connected but it is not in $B_1$. It follows that 
$$
O(n^{(d-1)\abs{\T}}n^{-(d-1)a(\sigma)/2}  )=o(n^{(d-1) (km-\frac m2)}).
$$
If $\abs{\T}<km$, then by \eqref{tree} with $\sigma\neq id$, we have 
$$
km-\abs{\T}+\frac{a(\sigma)}{2}\geq km-\abs{\T}+\frac{m-(km-\abs{\T})}{2}>\frac{m}{2}, 
$$
which implies 
$$
Q_m(L_nf,(\T,\sigma))=	O(n^{(d-1)\abs{\T}}n^{-(d-1)a(\sigma)/2}  )=o(n^{(d-1) (km-\frac m2)}).
$$
Hence, we  have 
\begin{equation}\label{bdb3}
Q_m(L_nf, B_3)=o(n^{(d-1) (km-\frac m2)}).
\end{equation}
By \eqref{bdb1}, \eqref{bdb2} and \eqref{bdb3},  for $m\geq3$ we get 
$$
Q_m(L_nf)=	Q_m(L_nf,B_1)+Q_m(L_nf,B_2)+Q_m(L_nf,B_3)=o(n^{(d-1)(km-\frac{m}{2})}).
$$
This together with \eqref{q2ctlcase1} will complete the proof of Lemma \ref{comc}, and thus the proof of Theorem \ref{clt} for $k\geq2$.

\section{Proof of Theorem \ref{clt2}}
In this section, we will prove Theorem \ref{clt2}.
We first claim that if  $f(x_1,..., x_k)$ with $k\geq 2$ satisfies \eqref{sym} and \eqref{dis}, then the $i$-margin function $f_i(x)$ is necessarily  constant for all $1\leq i\leq k$. In fact,  condition \eqref{sym} of the permutation invariance  implies that \beq\label{allequ}f_i(x)=f_1(x)\,\,\, \text{for all}\,\,\, i,\eeq which is equal to
\begin{equation*}
\begin{split}
&\int_{(S^d)^{k-1}}f(x, x_2,\ldots, x_k)dx_2\cdots dx_k\\=&
\int_{S^d}  \left( \int_{(S^d)^{k-2}} f(x,x_2,\ldots, x_k) dx_3\cdots dx_k \right)   dx_2\\
=&\int_{S^d}f_{1,2}(x,x_2)dx_2.
\end{split}
\end{equation*}
Here $f_{1,2}$ is $(1,2)$-margin function of $f$. 
Condition \eqref{dis} further
implies that the integral $\int_{S^d}f_{1,2}(x,x_2)dx_2$ is independent of $x$, i.e.,  $f_1(x)$ is a constant independent of $x$, and thus $F(x)$ is a constant.  Therefore, the limit of the variance on the right hand side of \eqref{q2case1} is degenerate.  
Without loss of generality,  we assume that the integral of $f$ is 0, i.e., $$ 
\int_{(S^d)^k} f(x_1,\ldots, x_k)dx_1\cdots dx_k=0.
$$ This is equivalent to $\int_{S^d} f_1(x)dx=0$, 
which  implies that (since $f_1$ is constant)	\begin{equation}\label{f1=0}
f_1(x)=0\,\, \text{and thus}\,\, F(x)=0  \,\,\text{for all}\,\,x\in S^d.
\end{equation}

\subsection{Calculations of the cumulants}
Again we will prove Theorem \ref{clt2} by the method of cumulants.   Recall the concepts of   break points, (ir)reducible graph and  the notation $\mathfrak{I}(m)$  (see Definitions \ref{def:con} and \ref{def:red}, and 
\eqref{defirm}), we first have 
\begin{lem}\label{anobs}
Let $f$ be a function of $k\geq 2$ variables that satisfies the $i$-margin function $f_i=0$ for all $i$.   For any $(\T,\sigma)\notin \mathfrak{I}(m)$, we have $Q_m(L_nf,(\T,\sigma))=0$.
\end{lem}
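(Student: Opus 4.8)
The plan is to exploit the factorization of the integral defining $Q_m(L_nf,(\T,\sigma))$ at a break point. Since the term only enters the cumulant sum when $(\T,\sigma)\in\mathcal{C}(m)$, and here $(\T,\sigma)\notin\mathfrak{I}(m)$, Definition \ref{def:red} tells us the pair is reducible: there exist $i\in[m]$ and $j\in[k]$ such that $T_{i,j}$ is the unique connection point in $T_i$ and $\sigma(x)=x$ for every $x\in T_i-\{T_{i,j}\}$. I will show that the $k-1$ variables attached to the remaining slots of $T_i$ decouple from the rest of the integrand, so that integrating them out turns the factor $f(T_i)$ into the $j$-margin function $f_j$, which is identically zero by hypothesis.

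First I would establish the decoupling. Fix $j'\neq j$ and put $q=T_{i,j'}$. Because $T_{i,j'}$ is not a connection point, Definition \ref{def:con} forces $\sigma(q)=q$ and forbids the existence of any $i'\neq i$ with $T_{i,j'}\in T_{i'}$; hence $x_q$ enters the product $f(\T)=\prod_{\ell=1}^m f(T_\ell)$ only through the single factor $f(T_i)$. Since $\sigma(q)=q$ we also have $\sigma^{-1}(q)=q$, so the only factor of $\prod_{p\in\operatorname{Range}(\T)}K(x_p,x_{\sigma(p)})$ depending on $x_q$ is $K(x_q,x_{\sigma(q)})=K_n(x_q,x_q)=k_n/s_d$ by \eqref{ondiagonalkernel}. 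Thus each of the $k-1$ variables $\{x_{T_{i,j'}}:j'\neq j\}$ occurs in the integrand only inside $f(T_i)$ together with its own constant diagonal kernel.

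With the decoupling in hand, I would integrate over the block $\{x_{T_{i,j'}}:j'\neq j\}$ first; this is legitimate by Fubini since $f$ is bounded, $S^d$ is compact, and the kernels are bounded. The $k-1$ diagonal kernels pull out as a constant $(k_n/s_d)^{k-1}$, while the only surviving dependence on these variables lives in $f(T_i)$. Integrating $f(x_{T_{i,1}},\ldots,x_{T_{i,k}})$ over all of its arguments except $x_{T_{i,j}}$ produces, by the definition \eqref{def_fi}, exactly $f_j(x_{T_{i,j}})$. Since $f_j\equiv 0$ by assumption, this inner integral vanishes identically, and therefore $Q_m(L_nf,(\T,\sigma))=0$.

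The one step that requires genuine care is the decoupling itself: one must check, directly from Definitions \ref{def:con} and \ref{def:red}, that no non-break-point variable $x_{T_{i,j'}}$ reappears anywhere else in the integrand, neither as an argument of another factor $f(T_{i'})$ nor as the target $x_{\sigma(p)}$ of some kernel. Once this bookkeeping is settled the conclusion is immediate and the hypothesis $f_i=0$ for all $i$ does the rest; unlike the asymptotic estimates of the previous section, no quantitative bounds are needed.
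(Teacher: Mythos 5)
Your proof is correct and takes essentially the same route as the paper's: factor the integrand at the break point, observe that each non-break-point variable $x_q$ of $T_i$ appears only in $f(T_i)$ and in the constant diagonal kernel $K_n(x_q,x_q)=k_n/s_d$, then integrate out that block of $k-1$ variables to produce a margin function, which vanishes by hypothesis. One trivial slip worth noting: the fact $\sigma(q)=q$ for $q\in T_i\setminus\{T_{i,j}\}$ comes from the second bullet of Definition \ref{def:red} (which you had already quoted), not from Definition \ref{def:con}, which by itself only yields $\sigma(q)\in T_i$; this misattribution does not affect the argument.
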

\begin{proof}
By the definition of  the reducible graph, we can assume that $(\T,\sigma)$ breaks at $q_0\in T_i$,  and thus $\sigma(q)=q$ for $q\in \text{Range}(T_i)-q_0$.  Thus we have 
\begin{equation*}
\begin{split}
&Q_m(L_nf,(\T,\sigma))\\
=&\int_{(S^d)^{\abs{\T}}}  \text{sgn}(\sigma)f(T_1)\cdots f(T_m)\Pi_{q\in \textrm{Range}(\T)}K_n(x_q,x_{\sigma(q)})d\x\\
=& \text{sgn}(\sigma) \int_{(S^d)^{\abs{\T}-k+1}} \left(\int_{(S^d)^{k-1}}  f(T_i)
\Pi_{q\in \text{Range}(T_i),q\neq q_0}K_n(x_q,x_q)dx_q \right) \\
& \times \left( \Pi_{j\neq i}f(T_j)\right) \left( \Pi_{q'\in \{q_0\} \cup (\text{Range}(\T)-\text{Range}(T_i)) }K_n(x_{q'},x_{\sigma(q')}) \right) dx_{q'}\\
=&0.
\end{split}
\end{equation*}
We have used the assumption $f_i=0$ 	in the last equality.
\end{proof}
Lemma \ref{anobs} implies that 
$$
Q_m(L_nf)=Q_m(L_nf, \mathcal{C}(m))=Q_m(L_nf,  \mathfrak{I}(m)).
$$

Recall the concept of the circle-like graph in Definition \ref{defcircle}, 
we express $\mathfrak{I}(m)$ as the union of 
\begin{equation} 
\begin{split}
E_1:=\{(\T,\sigma)\in \mathfrak{I}(m): (\T,\sigma) \mbox{ is circle-like}\}
\end{split}
\end{equation}
and its complement   \beq E_2:=\mathfrak{I}(m)-E_1.\eeq 

\begin{lem}\label{dm}
For $m\geq 2$,   recall that $a(\sigma)$ is the number of elements that are not  fixed by $\sigma$, we have 
\begin{itemize}
\item  For any $(\T,\sigma)\in \mathfrak{I}(m)$, 
\begin{equation}\label{tsigmak}
km-\abs{\T}+\frac{a(\sigma)}{2} \geq  m.
\end{equation}
\item 
If $(\T,\sigma)\in E_2$ and  $km-\abs{\T}+\frac{a(\sigma)}{2} =  m$,   then $\sigma$ is not a composition of disjoint transpositions, i.e., 
in the cycle decomposition of 
$\sigma$, there must exist at least one cyclic permutation with length  strictly greater than 2. 
\end{itemize}
\end{lem}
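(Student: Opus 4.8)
The plan is to prove both statements by working with the \emph{contracted graph} obtained from the $(\T,\sigma)$-graph by collapsing, for each $1\le i\le m$, the $k$ vertices $\{(i,1),\dots,(i,k)\}$ into one super-vertex $v_i$; this is the same device already used in Lemma \ref{lem:tree}, and connectedness of the $(\T,\sigma)$-graph passes to the $m$-vertex contracted graph. The inequality \eqref{tsigmak} is a sharpening of \eqref{tree}: I must simultaneously replace the coefficient of $a(\sigma)$ by $1/2$ and upgrade the right-hand side from $m-1$ to $m$, and the extra leverage for both is exactly irreducibility, i.e. the absence of break points in the sense of Definition \ref{def:red}.

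For the first bullet I would run a \emph{value-charging} argument. Irreducibility gives a clean dichotomy for each group $i$: either $T_i$ contains at least two connection points, or it contains exactly one connection point and $\sigma$ moves some non-connection coordinate of $T_i$, which (being non-connection) must be moved \emph{inside} $T_i$ and hence lies in an internal $\sigma$-orbit of length $\ge 2$ within $T_i$. Now charge each value $v\in\operatorname{Range}(\T)$ as follows: if $v$ is repeated, appearing in $p_v\ge 2$ groups, give weight $(p_v-1)/p_v\ge\frac12$ to each group containing it, the total over all values being $\sum_v(p_v-1)=km-\abs{\T}$; if $v$ is moved by $\sigma$ but not repeated, give weight $\frac12$ to its unique group, the total being at most $a(\sigma)/2$. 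Thus the grand total charge is at most $km-\abs{\T}+a(\sigma)/2$. The point is that \emph{every connection-point value and every moved value receives weight at least $\frac12$} (moved-and-repeated via the coincidence charge, moved-and-unrepeated via the $\frac12$ charge), so each group collects weight $\ge 1$: two distinct connection-point values in the first case of the dichotomy, or two distinct internal-orbit values in the second, each worth $\ge\frac12$ and distinct because coordinates inside a single $T_i$ carry distinct values. Summing the per-group lower bound $1$ over the $m$ groups against the global upper bound gives $m\le km-\abs{\T}+a(\sigma)/2$, which is \eqref{tsigmak}.

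For the second bullet I would push equality $km-\abs{\T}+a(\sigma)/2=m$ through this charging. Equality forces every estimate to be tight: no moved value is repeated, and every group must collect total charge exactly $1$. Since the only positive charges are $(p_v-1)/p_v$ for a repeated value and $\frac12$ for a moved unrepeated value, and a single repeated value with $p_v\ge 3$ would contribute more than $\frac12$ yet strictly less than $1$ and could not be completed to exactly $1$ by further $\ge\frac12$ charges, tightness pins down that each group carries \emph{exactly two} charged values, each of weight exactly $\frac12$: either a $p_v=2$ coincidence or a moved (and hence, if $\sigma$ is a product of transpositions, transposition) value, with all remaining coordinates fixed and unrepeated. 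In that all-transposition scenario each such value is a connection point carrying exactly one red edge—one solid edge for a $p_v=2$ coincidence, one dotted edge for a transposition—and no other coordinate of $T_i$ carries a red edge. This is precisely the circle-like condition of Definition \ref{defcircle}, contradicting $(\T,\sigma)\in E_2$. Hence $\sigma$ cannot be a composition of disjoint transpositions and must contain a cycle of length strictly greater than $2$.

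The step I expect to require the most care is the equality analysis of the second bullet rather than the inequality itself: one must rule out all alternative ways of summing per-group charges to exactly $1$ (in particular any value with $p_v\ge 3$), confirm that the no-break-point dichotomy is applied correctly in the $c_i=1$ case so that the forced internal orbit genuinely supplies two distinct weight-$\frac12$ values, and then verify that tightness delivers the \emph{graphical} circle-like structure in full, namely exactly one red edge at each of the two connection points and none elsewhere. The inequality \eqref{tsigmak} should then follow routinely once the value-charging and the dichotomy are set up, since charging per value (rather than per edge) makes the bound insensitive to the lengths of the cycles of $\sigma$.
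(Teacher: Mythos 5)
Your proposal is correct and follows essentially the same route as the paper: your value-charging scheme is precisely the paper's identity \eqref{gm1} (per-vertex weights $\frac{M(i,j)-1}{M(i,j)}+\frac{\Delta(i,j)}{2M(i,j)}$) in inequality form, and both arguments use irreducibility to extract two weight-$\geq\frac12$ contributions per group for \eqref{tsigmak}, then analyze the equality case to force the circle-like structure of Definition \ref{defcircle}. The one step you assert rather than prove --- that in the tight all-transposition case the single dotted edge of a moved value leaves its group, i.e.\ that value is a connection point --- is closed exactly as in the paper by connectedness (an internal transposition pair would isolate group $i$, contradicting $(\T,\sigma)\in\mathcal{C}(m)$), or within your own framework by noting that the dichotomy's second case would yield three charged values, so no genuine gap remains.
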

\begin{proof} 

We now define two functions $M(i,j)$ and $\Delta(i,j)$  for $1\leq i\leq m,1\leq j\leq k$.  Given a $(\T, \sigma)$-graph, we say an index $q\in[km]$ has \emph{multiplicity $M$} if there are exactly $M$ different $i$'s such that $q\in T_i$.   We define $M(i,j)$ as the multiplicity of $T_{i,j}$. We define $\Delta(i,j)=1$ if $\sigma(T_{i,j})\neq T_{i,j}$ and 0 otherwise. Then we have 
\begin{equation}\label{gm1}
km-\abs{\T}+\frac{a(\sigma)}{2} = \sum_{i=1}^m \sum_{j=1}^k \left( \frac{M(i,j)-1}{M(i,j)}+\frac{\Delta(i,j)}{2M(i,j)}\right).
\end{equation}

Since we assume that $(\T,\sigma)\in \mathfrak{I}(m)$,  for each $i$, $T_i$ has at least two distinct elements,  denoted by  $T_{i, i_1}$ and $T_{i,i_2}$, such that they both have red edges. Therefore, we have
\begin{equation}\label{md}
\max\{M(i,i_1)-1,\Delta(i,i_1) \}\geq 1 \mbox{ and }
\max\{M(i,i_2)-1,\Delta(i,i_2) \}\geq 1.
\end{equation}
If $M(i,i_1)>1$, then 
$$\frac{M(i,i_1)-1}{M(i,i_1)} \geq \frac{1}{2}.$$
If $M(i,i_1)=1$, then by \eqref{md}, $\Delta(i,i_1)=1$. We then have 
$$
\frac{\Delta(i,i_1)}{2M(i,i_1)}=\frac{\Delta(i,i_1)}{2}=\frac{1}{2}.
$$
In both cases we always have 
\begin{equation}\label{mdelta}
\frac{M(i,i_1)-1}{M(i,i_1)}+\frac{\Delta(i,i_1)}{2M(i,i_1)}\geq \frac{1}{2}.
\end{equation}
The same inequality holds for $T_{i,i_2}$. 
Hence we have
\begin{equation}\label{gm-1}
\sum_{j=1}^k \left( \frac{M(i,j)-1}{M(i,j)}+\frac{\Delta(i,j)}{2M(i,j)}\right)\geq 2\times \frac{1}{2}=1. 
\end{equation}
And the equality in \eqref{gm-1} holds iff there are exactly two vertices $(i,i_\alpha), \alpha\in \{1,2\}$ that have red edges and each satisfies
\begin{equation}\label{eqlcase}
M(i,i_\alpha)=1 \mbox{ and } \Delta(i,i_\alpha)=1; \mbox{ or } 	M(i,i_\alpha)=2 \mbox{ and } \Delta(i,i_\alpha)=0.
\end{equation}
By summing over $1\leq i\leq m$, we have 
\begin{equation}\label{gm2}
\sum_{i=1}^m \sum_{j=1}^k \left( \frac{M(i,j)-1}{M(i,j)}+\frac{\Delta(i,j)}{2M(i,j)}\right)\geq
\sum_{i=1}^m 1 =m.
\end{equation}
\eqref{tsigmak} now follows from \eqref{gm1} and \eqref{gm2}.

Now we turn to prove the second part of Lemma \ref{dm} by contradiction.  Suppose  that		$km-\abs{\T}+a(\sigma)/2 = m$ and the cycle decomposition of $\sigma$ only consists of disjoint transpositions, we need  to show $(\T,\sigma)\in E_1$.
By the proof of \eqref{tsigmak} above,
the condition $km-\abs{\T}+a(\sigma)/2 =m$ implies that 
\begin{equation}
\sum_{j=1}^k \left( \frac{M(i,j)-1}{M(i,j)}+\frac{\Delta(i,j)}{2M(i,j)}\right)=1
\end{equation}
for each $1\leq i\leq m$. This further implies that 
for each $1\leq i\leq m$, there are exactly two  vertices $(i,i_1)$ and $(i,i_2)$ that can have red edges, and all the other vertices have no red edges.
By \eqref{eqlcase}, for all $1\leq i\leq m$ and any $\alpha\in\{1,2\}$, either of the following two conditions holds:
\begin{itemize}
\item  $M(i,i_\alpha)=2$ and $\Delta(i,i_\alpha)=0$. In this case 
$(i,i_\alpha)$ has exactly one solid red edge but no red dotted edge.
\item $M(i,i_\alpha)=1$ and $\Delta(i,i_\alpha)=1$. In this case $(i,i_\alpha)$ has at least one dotted red edge, but no solid edge.  Since $\sigma$ is only composed of disjoint transpositions, $(i,i_\alpha)$ must have exactly one dotted red edge connecting with some other vertex $(j, j_{\alpha'})$.  And $j$ has to be distinct from $i$. Otherwise there would be no red edge between the set $\{(i,\cdot)\}$ and 
$\{(i',j'): i'\neq i, 1\leq j'\leq k\}$, which makes $(\T,\sigma)\notin \mathcal{C}(m)$.
\end{itemize}
As a conclusion, in both cases,  for each $1\leq i\leq m$,
there are exactly two vertices in $\{(i,j):1\leq j\leq k\}$ that can have red edge and each of them is connected to vertices in $\{(i',j'):i'\neq i, 1\leq j'\leq k\}$ with a single red edge. This
shows that $(\T,\sigma)$ is circle-like  which is a contradiction, and this proves the second part of Lemma \ref{dm}. 
\end{proof}

The following lemma indicates that  the summation over the subset $E_1$ yields the leading order term of $Q_m(L_nf)$.
\begin{lem}\label{qmcase2}
Fix any $m\geq 2$, we have the following two estimates. 
\begin{enumerate}
\item  	
\begin{equation}\label{qmbde2}
Q_m(L_nf, E_2)=o(n^{(d-1)(k-1)m}).
\end{equation}
\item 	
\begin{equation}\label{qme1bd}
\begin{split}
	Q_m(L_nf,E_1)=&\frac{1}{2}(m-1)! (k(k-1))^m	 \left(\frac{k_n}{s_d}\right)^{mk} \left(\frac{C_{d}}{n^{d-1}}\right)^m\\
	&			\times \int_{(S^d)^{ m}}\hat{h}(x_1,x_2) \hat{h}(x_2,x_3)\cdots \hat{h}(x_m,x_1)dx_1\cdots dx_m\\
	&+  o(n^{(d-1)(k-1)m}), 
\end{split}
\end{equation}
\end{enumerate}
where the constant $C_d$ is defined in Theorem \ref{clt2}, and the symmetric function $\hat h(x,y)$ is defined in \eqref{hath}.

\end{lem}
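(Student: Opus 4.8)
The plan is to prove both estimates by a single order-counting principle, controlling each summand $Q_m(L_nf,(\T,\sigma))$ through its diagonal kernel values, the product bound of Lemma \ref{productcontrol}, and the refined cancellation of Lemma \ref{keylemma}. The starting point is the elementary estimate
$$Q_m(L_nf,(\T,\sigma)) = O\!\left(n^{(d-1)(\abs{\T}-a(\sigma)/2)}\right),$$
obtained by writing $\prod_{q\in\operatorname{Range}(\T)}K_n(x_q,x_{\sigma(q)})=(k_n/s_d)^{\abs{\T}}\prod_{\sigma(q)\neq q}P_n(x_q\cdot x_{\sigma(q)})$, evaluating the $\abs{\T}-a(\sigma)$ diagonal factors as the constant $k_n/s_d=\Theta(n^{d-1})$, and bounding each off-diagonal cycle of length $r$ by $O(n^{-(d-1)r/2})$ via Lemma \ref{productcontrol}. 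By Lemma \ref{dm}(i) the exponent never exceeds $(d-1)(k-1)m$, so every term of $Q_m(L_nf)=Q_m(L_nf,\mathfrak{I}(m))$ is $O(n^{(d-1)(k-1)m})$, and the leading order can only come from configurations saturating $km-\abs{\T}+a(\sigma)/2=m$.

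For part (i) I would split $E_2$ according to whether the inequality in Lemma \ref{dm}(i) is strict. When $km-\abs{\T}+a(\sigma)/2>m$ the displayed bound already gives $o(n^{(d-1)(k-1)m})$. In the boundary case, Lemma \ref{dm}(ii) guarantees that $\sigma$ contains a cycle of length $r\geq 3$; fixing all remaining variables and treating $f(\T)$ together with the other (diagonal and off-diagonal) kernels as a bounded function of the $r$ cycle variables, Lemma \ref{keylemma} upgrades the $O(n^{-(d-1)r/2})$ factor for this cycle to $o(n^{-(d-1)r/2})$. Combined with Lemma \ref{productcontrol} for the remaining cycles, boundedness of $f$ on the fixed-point variables, and dominated convergence in the frozen variables (the $o$-bound being uniform since it depends only on $\norm{f}_\infty$), this yields $Q_m(L_nf,(\T,\sigma))=o(n^{(d-1)(k-1)m})$. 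As $E_2$ is finite for fixed $m,k$, summing proves \eqref{qmbde2}.

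For part (ii) the strategy is to read a circle-like $(\T,\sigma)$ as a necklace and evaluate it by a transfer-operator computation. Using Proposition \ref{prop:circle-like} I would record the configuration as a directed cyclic arrangement $p$ of the $m$ blocks together with an ordered pair of special positions $(i_1,i_2)$ per block; the remaining $k-2$ positions of $T_i$ are fixed by $\sigma$ and of multiplicity one, so integrating them against $f(T_i)$ produces $(k_n/s_d)^{k-2}$ times the $(i_1,i_2)$-margin, which by \eqref{sym} equals $f_{1,2}(u_i,v_i)$ in the two special variables. Each necklace edge joining $v_i$ to $u_{i+1}$ (indices read cyclically) is of one of two types: a solid edge identifies $v_i=u_{i+1}$ and contributes a diagonal factor $k_n/s_d$ with sign $+1$, while a dotted edge is a transposition contributing sign $-1$ and a factor $K_n^2(v_i,u_{i+1})$. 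Setting $x_i:=u_i$ and integrating out each $v_i$, the remark \eqref{rmk:lem2} turns a dotted edge into $-\,k_n\frac{2C_d}{\Gamma(d)s_d^2}\int_{S^d} f_{1,2}(x_i,z)\sin^{-(d-1)}(\arccos(z\cdot x_{i+1}))\,dz$, while a solid edge gives $(k_n/s_d)f_{1,2}(x_i,x_{i+1})$. The decisive point is that both carry the \emph{same} prefactor: the identity $\int_{S^d}\sin^{-(d-1)}(\arccos(z\cdot y))\,dz=\Gamma(d)s_d/(2C_d)$, a direct computation in the spherical coordinates of Section \ref{repformula}, forces the solid contribution to equal $k_n\frac{2C_d}{\Gamma(d)s_d^2}$ times the first term of $\hat h$. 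Summing over the two choices at each edge independently therefore factorizes and rebuilds exactly the kernel $\hat h(x_i,x_{i+1})$ of \eqref{hath}, so the whole configuration collapses to
$$\Big(k_n\tfrac{2C_d}{\Gamma(d)s_d^2}\Big)^m (k_n/s_d)^{(k-2)m}\int_{(S^d)^m}\hat h(x_1,x_2)\cdots\hat h(x_m,x_1)\,dx_1\cdots dx_m,$$
whose constant simplifies, using $k_n\sim 2n^{d-1}/\Gamma(d)$, to the prefactor in \eqref{qme1bd}. It then remains to count the circle-like skeletons: $(m-1)!$ directed cyclic orders times $(k(k-1))^m$ ordered choices of special positions, divided by $2$ because reversing the cycle direction while swapping $i_1\leftrightarrow i_2$ at every block yields the same equivalence class, giving the factor $\frac{1}{2}(m-1)!(k(k-1))^m$.

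The hard part will be the bookkeeping of part (ii): justifying the sequential reduction of the $m$ coupled $K_n^2$-factors to their limiting $\sin^{-(d-1)}$ kernels with errors that are genuinely of lower order (so the $o(\cdot)$ remainder in \eqref{qme1bd} is controlled), verifying the constant identity for $\int\sin^{-(d-1)}$ that makes the solid and dotted prefactors coincide and thereby assembles $\hat h$, and pinning down the factor $\frac{1}{2}$ in the combinatorial count. By contrast, the bound on $E_2$ in part (i) is routine once Lemma \ref{keylemma} is available.
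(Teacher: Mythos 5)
Your proposal is correct, and part (i) coincides with the paper's argument (split $E_2$ by whether Lemma \ref{dm} is saturated; use Lemma \ref{productcontrol} in the strict case and Lemma \ref{dm}(ii) plus Lemma \ref{keylemma} in the boundary case — your parenthetical claim that the $o(\cdot)$ from Lemma \ref{keylemma} is ``uniform since it depends only on $\norm{f}_\infty$'' is not literally justified, since Riemann--Lebesgue rates are not uniform over bounded families, but the paper glosses the same point and the fix via dominated convergence is the one you already name). In part (ii) you take a genuinely different, though equally valid, route at one key step. The paper never takes limits edge-by-edge: it introduces the finite-$n$ kernel $h_n(x,y)=\int_{S^d}(f_{1,2}(x,y)-f_{1,2}(x,z))P_n^2(y,z)\,dz$ and observes that summing the $2^m$ solid/dotted configurations and integrating out the intermediate variables reproduces $\left(k_n/s_d\right)^{mk}\int\prod_i h_n(x_i,x_{i+1})\,d\x$ \emph{exactly}, as an identity at finite $n$; then a single dominated-convergence step, using $n^{d-1}h_n\to C_d\hat h$ pointwise together with the uniform bound $\abs{n^{d-1}h_n}\le C$, yields \eqref{qme1bd}. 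You instead pass to the limit within each of the $2^m$ configurations separately (solid edge $\to \frac{s_d\Gamma(d)}{2}f_{1,2}$, dotted edge $\to -C_d\int f_{1,2}(x_i,z)\sin^{-(d-1)}(\arccos(z\cdot x_{i+1}))\,dz$ after normalization by $n^{d-1}$) and then reassemble $\hat h$ using the identity $\int_{S^d}\sin^{-(d-1)}(\arccos(z\cdot y))\,dz=\Gamma(d)s_d/(2C_d)$. That identity is correct — both sides equal $2\pi^{(d+2)/2}/\Gamma(d/2)$, by the spherical-coordinate computation and the Legendre duplication formula — so your assembly of $\hat h$ and hence the prefactor $(k_n/s_d)^{mk}(C_d/n^{d-1})^m$ go through; your combinatorial count ($(m-1)!$ cyclic orders times $(k(k-1))^m$ ordered position choices, divided by $2$ for orientation reversal) is exactly the paper's. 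The trade-off: the paper's $h_n$-resummation needs no explicit constant evaluation and makes the symmetry $h_n(x,y)=h_n(y,x)$ (used later for the spectral decomposition of $\hat h$) immediate, while your version requires the extra identity and a per-configuration dominated-convergence check (available from $n^{d-1}\int_{S^d}P_n^2\,dz\le C$), but in exchange exposes \emph{why} the two edge types carry matching prefactors and therefore why $\hat h$, and not some other kernel, emerges in the limit.
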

By the relation $Q_m(L_nf)=Q_m(L_nf, \mathfrak{I}(m))=Q_m(L_nf, E_1)+Q_m(L_nf,E_2)$, we have the following corollary.
\begin{cor}\label{cor:2qm}
For any $m\geq 2$, the $m$-th cumulant satisfies the asymptotic expansion 
\begin{equation}\label{2qme1bd}
\begin{split}
Q_m(L_nf)=&\frac{1}{2}(m-1)! (k(k-1))^m	 \left(\frac{k_n}{s_d}\right)^{mk} \left(\frac{C_{d}}{n^{d-1}}\right)^m\\
&			\times \int_{(S^d)^{ m}}\hat{h}(x_1,x_2) \hat{h}(x_2,x_3)\cdots \hat{h}(x_m,x_1)dx_1\cdots dx_m\\
&+  o(n^{(d-1)(k-1)m}).
\end{split}
\end{equation}In the special case  $m=2$, it yields the limit  \eqref{q2case2} for the variance of $L_nf$. 

\end{cor}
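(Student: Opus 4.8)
The plan is to obtain the expansion \eqref{2qme1bd} by simply assembling the two halves of Lemma \ref{qmcase2}, and then to read off the $m=2$ case by a direct substitution. First I would recall that, by the cumulant formula \eqref{express cumu}, $Q_m(L_nf)=Q_m(L_nf,\mathcal C(m))$. Since we are in the degenerate regime where every $i$-margin vanishes (by \eqref{allequ} and \eqref{f1=0}, $f_i\equiv 0$ for all $i$), Lemma \ref{anobs} shows that each reducible pair contributes zero, so the sum collapses onto the irreducible pairs: $Q_m(L_nf)=Q_m(L_nf,\mathfrak I(m))$. Because $\mathfrak I(m)=E_1\cup E_2$ is a disjoint decomposition into the circle-like pairs and their complement, the additivity of $Q_m(L_nf,\cdot)$ over index sets (immediate from \eqref{qmexpress}) gives
\[
Q_m(L_nf)=Q_m(L_nf,E_1)+Q_m(L_nf,E_2).
\]

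Next I would invoke Lemma \ref{qmcase2}. Part (1), equation \eqref{qmbde2}, tells us that the complement $E_2$ is asymptotically negligible, $Q_m(L_nf,E_2)=o(n^{(d-1)(k-1)m})$, so it may be swept into the error term. Part (2), equation \eqref{qme1bd}, supplies the circle-like contribution as the stated cyclic trace of $\hat h$ plus an error of the same order $o(n^{(d-1)(k-1)m})$. Adding the two displays and merging the two $o(n^{(d-1)(k-1)m})$ terms into a single one yields precisely \eqref{2qme1bd}; no further estimate is needed for general $m$.

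It then remains to verify the $m=2$ specialization. Setting $m=2$ in \eqref{2qme1bd} leaves the prefactor $\tfrac12\,(k(k-1))^2 (k_n/s_d)^{2k}(C_{d}/n^{d-1})^2$ multiplying $\int_{(S^d)^2}\hat h(x_1,x_2)\hat h(x_2,x_1)\,dx_1dx_2$. Here I would use the symmetry of $\hat h$ (recorded just after \eqref{hath}: $\hat h(x_2,x_1)=\hat h(x_1,x_2)$) to rewrite this integral as $\int_{(S^d)^2}\hat h(x,y)^2\,dxdy$. Converting the powers of $n^{d-1}$ into powers of $k_n$ via $k_n\sim 2n^{d-1}/\Gamma(d)$, i.e. $n^{-(d-1)}\sim 2/(\Gamma(d)k_n)$, turns the prefactor into $\tfrac12 k^2(k-1)^2 s_d^{-2k}C_{d}^2\,(4/\Gamma(d)^2)\,k_n^{2k-2}=\frac{2C_{d}^2k^2(k-1)^2}{\Gamma(d)^2 s_d^{2k}}\,k_n^{2k-2}$, while the error $o(n^{2(d-1)(k-1)})$ becomes $o(k_n^{2k-2})$. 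Dividing by $k_n^{2k-2}$ and letting $n\to\infty$ recovers \eqref{q2case2}, since $Q_2(L_nf)=\mathrm{Var}(L_nf)$.

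At this level the Corollary is a purely formal consequence of Lemma \ref{qmcase2}: the only computation with any content is the $m=2$ bookkeeping above, whose two genuine inputs are the symmetry of $\hat h$ (which lets the length-two cyclic trace collapse to $\int\hat h^2$) and the asymptotic $k_n\sim 2n^{d-1}/\Gamma(d)$ for matching constants. I therefore expect no real obstacle here; all the difficulty sits upstream, in Lemma \ref{qmcase2}(2), whose proof must show that the circle-like graphs in $E_1$ reorganize — through the precise $K_n^2$-asymptotics of Lemma \ref{sc} and the ensuing $\sin^{-(d-1)}(\arccos(x\cdot y))$ kernel — into the cyclic product of $\hat h$. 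Since that statement is assumed in the present excerpt, the proof of the Corollary reduces to the additive combination and the single limiting computation described above.
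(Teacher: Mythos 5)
Your proposal is correct and follows the paper's own route exactly: the paper derives the corollary in one line from the decomposition $Q_m(L_nf)=Q_m(L_nf,\mathfrak{I}(m))=Q_m(L_nf,E_1)+Q_m(L_nf,E_2)$ (via Lemma \ref{anobs}) together with the two parts of Lemma \ref{qmcase2}. Your $m=2$ bookkeeping, using the symmetry of $\hat h$ and $k_n\sim 2n^{d-1}/\Gamma(d)$ to recover \eqref{q2case2}, is also the computation the paper implicitly asserts, and it checks out.
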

\begin{proof}[Proof of Lemma \ref{qmcase2}]
We first prove part (1).  Given any $(\T,\sigma)\in E_2\subset \mathfrak{I}(m)$, 
by \eqref{tsigmak},  it holds that  $km-\abs{\T}+\frac{a(\sigma)}{2}\geq m$. 
For the case   $km-\abs{\T}+\frac{a(\sigma)}{2}> m$, 
by Lemma \ref{productcontrol}, we have 
\begin{equation}\label{c1}
\begin{split}
&Q_m(L_nf,(\T,\sigma))\\
=&\int_{(S^d)^{\abs{\T}}} f(T_1)\cdots f(T_m)\text{sgn}(\sigma) \Pi_{q\in \text{Range}(\T)} K_n(x_q,x_{\sigma(q)})d\x\\
=&
O(n^{\abs{\T} (d-1)}) O(n^{-(d-1)a(\sigma)/2})\\
=&O(n^{ (d-1)(\abs{\T} -a(\sigma)/2) })
=o(n^{(d-1)(mk-m)}).
\end{split}
\end{equation} 
For the case $km-\abs{\T}+\frac{a(\sigma)}{2}=m$, 
by the second part of Lemma \ref{dm}, 
there must be a cyclic permutation whose length is at least 3 in the cycle decomposition of $\sigma$. Hence by 	Lemma \ref{productcontrol} and Lemma \ref{keylemma}, we can first integrate all variables with indices in that cyclic permutation, and then integrate the remaining variables to get
\begin{equation}\label{c2}
\begin{split}
Q_m(L_nf,(\T,\sigma))				&=
O(n^{\abs{\T} (d-1)})  o(n^{-(d-1)a(\sigma)/2})\\
&=o(n^{ (d-1)(\abs{\T} -a(\sigma)/2) })
=o(n^{(d-1)(mk-m)}).
\end{split}
\end{equation}
By \eqref{c1} and \eqref{c2}, if we sum over all $(\T,\sigma)\in E_2$,  we prove \eqref{qmbde2}.

We next prove part (2). 
We define 
\begin{equation}\label{defhn}
\begin{split}
h_n(x,y):=&\int_{S^d} (f_{1,2}(x,y) -f_{1,2}(x,z))P^2_n(y,z)dz\\
=&
(k_n/s_d)^{-1}  f_{1,2}(x,y) -
\int_{S^d} f_{1,2}(x,z) P_n^2(y,z)dz.\\
\end{split}
\end{equation}
Since $f_{1,2}(x,y)$ and $P_n(x,y)$ depend only on the distance $\d(x,y)$, we have 
\begin{equation}\label{symxy}
\begin{split}
h_n(x,y)=&
(k_n/s_d)^{-1}  f_{1,2}(x,y) -
\int_{S^d} f_{1,2}(x,z) P_n^2(y,z)dz\\
=&(k_n/s_d)^{-1}  f_{1,2}(y,x)-
\int_{S^d} f_{1,2}(y,z) P_n^2(x,z)dz=h_n(y,x).
\end{split}
\end{equation}
Hence $h_n(x,y)$ is symmetric in $x$ and $y$. 
We  claim that 
\begin{equation}\label{qmformula}
\begin{split}
Q_m(L_nf,E_1)=&\frac{1}{2}(m-1)! (k(k-1))^m  \left(\frac{k_n}{s_d}\right)^{mk}
\\	&\times \int_{(S^d)^m}  h_n(x_1,x_2)   h_n(x_2,x_3) \cdots   
h_n(x_m,x_1) dx_1\cdots dx_m.
\end{split}
\end{equation}
Now we prove \eqref{qmformula}. 		Given   $(\T,\sigma)\in E_1$ which is circle-like, by  Proposition \ref{prop:circle-like}, we can find vertices $(i,i_1)$ and $(i,i_2)$ for $1\leq i\leq m$ and a cyclic permutation $p$ of $\{1,\ldots, m\}$ such that
$(i,i_2)$ is connected with $(p(i),p(i)_1)$ with a red edge for all $i$.  To compute $Q_m(L_nf, (\T,\sigma))$, for simplicity, 
by condition  \eqref{sym}  of the permutation invariance of $f$, we assume that  
$i_1=1$ and $i_2=2$ for all $i$, and we also assume $p$ is the cyclic permutation $(12 \cdots m)$. We now define a new kernel $\tilde{P}_i(x,y)$ as follows. If $(i,2)$ and $(i+1,1)$ are connected by a solid red edge (i.e., $T_{i,2}=T_{i+1,1}$),  we let 
$$\tilde{P}_i(x,y)=K_n^{-1}(x,x)\delta_y(x)= (k_n/s_d)^{-1}\delta_y(x),$$  
where $\delta_y(x)$ is a Dirac delta function such that for any function $g$, $$
\int_{S^d} \delta_y(x) g(x)dx=g(y).
$$
If $(i,2)$ and $(i+1,1)$ are connected by a dotted red edge (i.e., $\sigma(T_{i,2})=T_{i+1,1}$ or $\sigma(T_{i+1,1})=T_{i,2}$), then we let $$\tilde{P}_i(x,y)=-P_n^2(x,y)=-(k_n/s_d)^{-2} K_n^2(x,y).$$
Integrating over all variables except those in the set $\{T_{i,\alpha}, 1\leq i\leq m, 1\leq \alpha \leq 2\}$,  
\begin{equation}\label{qmf1}
\begin{split}
Q_m(L_nf,(\T,\sigma))
=&\left(\frac{k_n}{s_d}\right)^{mk} \int_{(S^d)^{2m}}f_{1,2}(x_1,y_1)\tilde{P}_1(y_1,x_2)
f_{1,2}(x_2,y_2)\tilde{P}_2(y_2,x_3) \times \\
&\cdots \times  f_{1,2} (x_m,y_m)\tilde{P}_m(y_m,x_1)   
dx_1\cdots dx_m dy_1\cdots dy_m .
\end{split}
\end{equation}
If we fix the cyclic permutation $p=(1\cdots m)$ and indices $i_\alpha=1,2$, then we can get $2^m$ different $(\T,\sigma)$ in the set $E_1$, because each red edge between $(i,2)$ and $(i+1,1)$ can either be a solid one, or a dotted one.  If we sum over all $2^m$ different $(\T,\sigma)$ in \eqref{qmf1} and integrate over the variables $y_1,\ldots, y_m$, then we get a total contribution of 
\begin{equation}\label{mhn}
\begin{split}
\left(\frac{k_n}{s_d}\right)^{mk} 	\int_{(S^d)^m}  h_n(x_1,x_2)   h_n(x_2,x_3) \times \cdots   
\times h_n(x_m,x_1) dx_1\cdots dx_m.
\end{split}
\end{equation}

Since there are $(m-1)!$ cyclic permutations of $[m]$ and there are $(k(k-1))^m$ distinct combinations of the indices $i_\alpha,1\leq i\leq m, 1\leq \alpha\leq 2$, we obtain \eqref{qmformula}. But note that there is a  factor $1/2$ in the front of  \eqref{qmformula}, this is because given a circle-like $(\T,\sigma)$-graph,  the correspondence from $p$ and  $\{i_\alpha, 1\leq i\leq m, \alpha=1,2\}$ to $(\T,\sigma)$ is not 1-1, but rather 2-1. Indeed, by defining $p'=p^{-1}$ and $i'_\alpha=i_{3-\alpha}$, we end up at the same $(\T,\sigma)$-graph.  As an example,  the $(\T,\sigma)$-graph given in the left panel of Figure \ref{Example2} is circle-like,  and by Proposition \ref{prop:circle-like} we can take the cyclic permutation as $p=(123)$ or $p=(132)$.

Now we prove part (2) of Lemma \ref{qmcase2}. 
By  \eqref{rmk:lem2},  for any fixed $x$ and $y$, 
\begin{equation}\label{h}
\begin{split}
\lim_{n\to\infty} n^{d-1}h_n(x,y)&=C_{d}
\int_{S^d}(f_{1,2}(x,y)-f_{1,2}(x,z))\sin^{-(d-1)}(\arccos(z\cdot y))dz\\
&=			C_{d}\hat{h}(x,y).
\end{split}
\end{equation}
Furthermore, by the boundedness of $f$ and \eqref{reproduce4}, there exists  constants $c$ and $C$ such that for all $x$ and $y$, we have 
\begin{equation}\label{unifbo}
\abs{n^{d-1}h_n(x,y)}\leq c n^{d-1}\int_{S^d}P_n^2(y,z)dz \leq C. 
\end{equation}
Hence, part (2) of Lemma \ref{qmcase2} follows from \eqref{qmformula}, \eqref{h} and the dominated convergence theorem. 
\end{proof}
\subsection{Identification of the limiting distribution}
Recall from \eqref{symxy} that $h_n$ and thus  $\hat{h}$ are both symmetric, i.e., $\hat{h}(x,y)=\hat{h}(y,x)$. This implies that there exists an orthonormal basis of $L^2(S^d)$, say $w_j,j\geq 1$ such that
$$\hat{h}(x,y)=\sum_{j=1}^{\infty}
z_jw_j(x)w_j(y)$$
for almost all $(x,y)\in S^d \times S^d$. 

We consider the following random variable $$
X_n:=\Big(L_nf -\E(L_nf)\Big)\left(\frac{k_n}{s_d} \right)^{-k} \left(\frac{C_{d} k(k-1)}{n^{d-1}}\right)^{-1}.
$$
By Corollary \ref{cor:2qm},  for any fixed $m\geq 2$,  we have 
\begin{equation}\label{qmxnlimit}
\lim_{n\to\infty}Q_m(X_n)=\frac{(m-1)!}{2}\sum_{j=1}^{\infty} z_j^m. 
\end{equation}
In addition, $Q_1(X_n)=\E(X_n)=0$ for all $n$. 

We shall now determine the specific form of the limiting distribution of $X_n$ in three steps. Let $\chi_i,i\geq 1$ be independent  chi-squared random variables with one degree of freedom, defined on some common probability space $\Omega_0$. We consider a sequence of random variables
$Y_N$ defined by
$$
Y_N= \sum_{i=1}^N z_i (\chi_i-1)/2.
$$
\begin{itemize}
\item We show that $Y_N, N\geq 1$ is a Cauchy sequence in $L^2(\Omega_0)$. Thus  $Y_N$ converges to some limiting random variable $Y$ in the $L^2$ norm. We further show that the convergence is also in $L^m$ for any $m\geq 1$, which implies that   $Q_m(Y_N)\to Q_m(Y)$ for any $m\geq 1$. 
\item We next find the cumulants of $Y$ by computing $\log \E\exp(\mathrm itY_N)$ and taking the limit $N\to \infty$. It turns out that 
$$
Q_m(Y)=\lim_{n\to\infty}Q_m(X_n), \, \forall m\geq 1.
$$
\item Finally, we prove that the distribution of $Y$ satisfies the Carleman's condition. This combined with the second step shows that $X_n$ converges to $Y$ in distribution and completes the proof of Theorem \ref{clt2}. 
\end{itemize}

By \eqref{h} and \eqref{unifbo}, $\hat{h}(x,y)$ is uniformly bounded,  and thus  we have $$ {\sum_{j=1}^{\infty} z_j^2}=\int_{S^d}\int_{S^d}\hat{h}(x,y)^2dxdy<\infty.$$ 
Thus for any $N_1<N_2$, it holds that 
$$
\norm{Y_{N_1}-Y_{N_2}}_{L^2}^2\leq C\sum_{j=N_1+1}^{N_2} z_j^2 \to 0\,\,  \mbox{ as }\,\,N_1, N_2\to \infty,
$$
which implies that $Y_N,N\geq 1$ is a Cauchy sequence in $L^2(\Omega_0)$.
Consequently, we can find a limiting random variable $Y$ such that $Y_N\to Y$ in $L^2$.  

For all $m\geq 2$,   one has 
\begin{equation}\label{sumzm}
\sum_{j=1}^{\infty} \abs{z_j}^m\leq \Big(\sum_{j=1}^\infty z_j^2\Big)^{m/2}
<\infty.
\end{equation}
By \eqref{sumzm}, for any even integer $m$, the $m$-th moment of $Y_{N}$ is  bounded uniformly from above: 
$$
\E(Y_N^m)\leq  C\sum_{m=m_1+\cdots+m_{\ell}: m_1,\ldots, m_{\ell}\geq 2}
\prod_{i=1}^{\ell}  	 \Big(\sum_{j=1}^{\infty} \abs{z_j}^{m_i}\Big)<\infty,
$$
where the summation  is  over all integer partitions of $m$. 
This further implies  the sequence $\{Y_N^m,N\geq 1\}$
is uniformly integrable for any fixed $m\geq 1$ and thus we have 
$$
Y_N \xrightarrow{L^m} Y \,\,\mbox{ as }\,\,N\to\infty
$$  
for all $m\geq 1$.  We can formally write $Y$ as the sum $\sum_{i=1}^\infty z_i(\chi_i-1)/2$.

We now turn to the second step. 
The cumulant generating function of $Y_N$ is 
\begin{equation*}
\begin{split}
\log \E\exp(\i tY_N)=&\sum_{i=1}^N  \log \E\exp( z_i (\chi_i-1) \i t/2)\\
=&\sum_{i=1}^N
\log \frac{1}{\sqrt{1-z_i \i t}}-\sum_{i=1}^N \frac{ z_i\i t}{2} =\sum_{i=1}^N \sum_{m=2}^{\infty} \frac{z_i^m}{2m}(\i t)^m.
\end{split}
\end{equation*}
For $m\geq 2$, the $m$-th cumulant of $Y_N$ is 
\begin{equation}\label{qmx}
Q_m(Y_N)=m!  \sum_{j=1}^{N} \frac{z_j^m}{2m}=\frac{(m-1)!}{2}\sum_{i=1}^N z_i^m.
\end{equation}
Since $Y_N$ converges to $Y$ in  $L^m$ for all $m\geq 1$,  by \eqref{cummu_moment} we have 
\begin{equation}\label{qmy}
Q_m(Y)= \lim_{N\to\infty}Q_m(Y_N)=\frac{(m-1)!}{2}\sum_{i=1}^{\infty} z_i^m,\, \forall\, m\geq 2, 
\end{equation}
which coincides with \eqref{qmxnlimit}. Also,  $Q_1(Y)=\E(Y)=0$ since $\E(Y_N)=0$ for all $N$. 

To finish the proof of the convergence of $X_n$ to $Y$,  we need to show that the distribution of  $Y$ is uniquely determined by the   cumulant condition \eqref{qmy}. To this end it suffices to verify the  Carleman's condition
\begin{equation}\label{carl}
\sum_{m=1}^{\infty} \left(\E(Y^{2m})\right)^{-1/(2m)}=\infty.
\end{equation}
To establish \eqref{carl},  by \eqref{sumzm} and \eqref{qmy}, for $m\geq 2$, we have
\begin{equation}\label{qmxbd}
\abs{Q_m(Y)}\leq m! C^m,
\end{equation}
for some constant $C>0$. 
Note that \eqref{qmxbd} also holds for $m=1$ since $Q_1(Y)=0$.  By \eqref{moment_cummu} and \eqref{qmxbd}, for any even integer $m$, we have 
\begin{equation}
\begin{split}\E(Y^m) \leq &	\sum_{ R=\{R_1, \ldots, R_{\ell}\} \in \Pi(m)   }\abs{Q_{\abs{R_1}}}\cdots \abs{Q_{\abs{R_{\ell}}}}\\
\leq &\sum_{ R=\{R_1, \ldots, R_{\ell}\} \in \Pi(m)   } \abs{R_1}! C^{\abs{R_1}}\cdots \abs{R_{\ell}}!C^{\abs{R_{\ell}}}\\
=& C^m \sum_{ R=\{R_1, \ldots, R_{\ell}\} \in \Pi(m)   } \abs{R_1}! \cdots \abs{R_{\ell}}!,
\end{split}
\end{equation}
where we used the fact that $\sum_{i=1}^{\ell}\abs{R_i}=m$. 
To estimate the last summation, given an integer partition $m=m_1+\cdots+m_{\ell}$ for some   $\ell\geq 1$ and $m_1,\ldots, m_{\ell}\geq 1$. Denote the number of distinct $m_i$'s by $N'$ and let $\tau_1,\ldots, \tau_{\ell}$ be their multiplicities. 
Then  the number of partitions $\{R_1,\ldots, R_{\ell}\}$ of $[m]$ such that $$\# \{R_i: \abs{R_i}=m_i\}=\tau_i, \quad  \forall \, 1\leq i\leq \ell$$ is given by
$$
\frac{m!}{m_1!\cdots m_\ell!  \prod_i ^{N'}\tau_i! }.
$$  
Thus, we have 
\begin{equation}\label{bdem1}
\begin{split}
\E(Y^m) \leq & C^m \sum_{m=m_1+\cdots+m_{\ell}} 
\sum_{R\in \Pi(m), \abs{R_i}=m_i}m_1!\cdots m_{\ell}!\\
\leq &  C^m \sum_{m=m_1+\cdots+m_{\ell}}  
\frac{m!}{m_1! \cdots m_{\ell}! \prod_i ^{N'}\tau_i! } m_1! \cdots m_{\ell}!\\
\leq & C^m  m! \sum_{m=m_1+\cdots+m_{\ell}}  1.
\end{split}
\end{equation}
It is known  that the total number of partitions of an integer $m$, denoted by $\kappa(m)$, satisfies $\log \kappa (m)\sim \pi \sqrt{(2m)/3}$ as $m\to\infty$ (p.70 in  \cite{A}). 
Consequently,  for some constant $\tilde{C}$ large enough and all $m\geq 1$, $\kappa (m)\leq \tilde{C}^m $. Therefore, we have 
\begin{equation}\label{bdem2}
\E(Y^m) \leq C^m m! \tilde{C}^m \leq (C\tilde{C})^m m^m.
\end{equation}
Now \eqref{carl} follows from \eqref{bdem2} since
\begin{equation}
\sum_{i=1}^{\infty} \left(\E(Y^{2i})\right)^{-1/(2i)} \geq 
\sum_{i=1}^{\infty} \left( (C\tilde{C})^{2i} (2i)^{2i}\right)^{-1/(2i)}\geq \sum_{i=1}^{\infty}\frac{c}{i}=\infty. 
\end{equation}
This completes the proof of \eqref{carl},  and thus we finish the proof of Theorem \ref{clt2}.

\end{document}